\documentclass{article} %
\usepackage{iclr2025_conference,times}
\iclrfinalcopy

\usepackage[utf8]{inputenc} %
\usepackage[T1]{fontenc}    %
\usepackage{hyperref}       %
\usepackage{url}            %
\usepackage{booktabs}       %
\usepackage{amsfonts}       %
\usepackage{nicefrac}       %
\usepackage{microtype}      %
\usepackage{xcolor}         %
\usepackage{amsmath, amsthm, todonotes}
\usepackage{thm-restate}

\theoremstyle{plain}
\begingroup
\newtheorem{theorem}{Theorem}[]
\newtheorem*{theorem*}{Theorem}
\newtheorem*{"theorem"}{``Theorem''}

\newtheorem{lemma}[theorem]{Lemma}
\endgroup

\theoremstyle{definition}
\begingroup
\newtheorem{definition}[theorem]{Definition}
\endgroup

\theoremstyle{remark}
\begingroup
\newtheorem{remark}[theorem]{Remark}
\newtheorem{example}[theorem]{Example}
\endgroup

\newcommand{\N}{\mathbb N}

\newcommand{\R}{\mathbb R} 
\newcommand{\E}{{\mathbb E}}
\renewcommand{\L}{{\mathcal L}}

\newcommand{\M}{{\mathcal M}}

\newcommand{\dist}{{\rm dist}}

\newcommand{\df}{\nabla f(x'_n)}

\newcommand{\LRa} {\Leftrightarrow}
\newcommand{\Ra} {\Rightarrow}

\newcommand{\dt}{\,\mathrm{d}t}

\let \eps = \varepsilon
\DeclareMathOperator*{\argmin}{argmin} 
\newcommand\norm[1]{\left\Vert#1\right\Vert}

\hypersetup{
	colorlinks,
	linkcolor={blue!50!black},
	citecolor={blue!50!black},
	urlcolor={blue!80!black}
}

\newenvironment{pde}{\left\{\begin{array}{rll} } {\end{array}\right.}

\title{Nesterov acceleration in benignly non-convex landscapes}

\author{%
  Kanan Gupta,\; Stephan Wojtowytsh\\
  Department of Mathematics, University of Pittsburgh\\
  \texttt{kanan.g@pitt.edu, s.woj@pitt.edu}
}
\allowdisplaybreaks

\begin{document}

\maketitle

\begin{abstract}
While momentum-based optimization algorithms are commonly used in the notoriously non-convex optimization problems of deep learning, their analysis has historically been restricted to the convex and strongly convex setting. In this article, we partially close this gap between theory and practice and demonstrate that virtually identical guarantees can be obtained in optimization problems with a `benign' non-convexity. We show that these weaker geometric assumptions are well justified in overparametrized deep learning, at least locally. Variations of this result are obtained for a continuous time model of Nesterov's accelerated gradient descent algorithm (NAG), the classical discrete time version of NAG, and versions of NAG with stochastic gradient estimates with purely additive noise and with noise that exhibits both additive and multiplicative scaling.
\end{abstract}

\section{Introduction}

Accelerated first order methods of optimization are the backbone of modern deep learning. So far, theoretical guarantees that momentum-based methods accelerate over memory-less gradient-based methods have been limited to the setting of convex objective functions. Indeed,  recent work of \citet{yue2023lower} shows that the assumption of convexity cannot be weakened as far as, for instance, the Polyak-Lojasiewicz (PL) condition $\|\nabla f\|^2\geq 2\mu\,(f-\inf f)$, which has been used to great success in the study of gradient descent algorithms {\em without} momentum for instance by \cite{karimi2016linear}.

Optimization problems in deep learning are notoriously non-convex. Initial theoretical efforts focused on approximating the training of very wide neural networks by the parameter optimization in a related linear model: The neural tangent kernel (NTK). \cite{jacot2018neural, weinan2019comparative} show that for randomly initialized parameters, gradient flow and gradient descent trajectories remain uniformly close to those which are optimized by the linearization of the neural network around the law of its initialization. This analysis was extended to momentum-based optimization by \citet{liu2022provable}.

Recall that a $C^2$-smooth function $f$ is convex if and only if its Hessian, $D^2f$, is positive semi-definite, i.e.\ has only non-negative eigenvalues.
Strictly negative (but small) eigenvalues of the Hessian of the loss function have been observed close to the set of global minimizers experimentally by \cite{sagun2017eigenvalues, sagun2018empirical, alain2018negative} and their presence has been explained theoretically by \cite{wojtowytsch2023stochastic}. This poses questions about the use of momentum-based optimizers such as SGD with (heavy ball or Nesterov) momentum or Adam in the training of deep neural networks. {In this work, we show that acceleration can be guaranteed for Nesterov's method under much weaker geometric assumptions than (strong) convexity, in particular for certain objective functions that have non-unique and non-isolated minimizers and whose Hessian may have negative eigenvalues up to a certain size.}

In the remainder of this section, we briefly review how our work fits into the literature. In Section \ref{section setting}, we precisely state the assumptions under which we prove convergence at an accelerated rate and discuss how our work connects to optimization in deep learning. Our main results are presented in Section \ref{section results}, both in discrete and continuous time.
Some technical details are postponed to the appendix.

\subsection{Previous work}\label{section previous}

Gradient-based optimization was first proposed by \citet{cauchy1847methode} in form of the gradient descent algorithm. Over a century later, momentum-based `accelerated' algorithms were introduced by \citet{hestenes1952methods} for convex quadratic functions and by \citet{nesterov_original} for general smooth and convex objective functions. Nesterov's work was generalized to non-smooth convex optimization by \citet{beck2009fast} and to stochastic smooth convex optimization among others by \citet{nemirovski2009robust, shamir2013stochastic, jain2019making, laborde2020lyapunov} for additive noise and by \citet{liu2018mass, even2021continuized, vaswani2019fast, gupta2023achieving} for multiplicatively scaling noise. See also \citep{ghadimi2012optimal, ghadimi2013optimal} for more information on accelerated stochastic gradient methods. {While the heavy ball method is used extensively in deep learning, \cite{lessard2016analysis, goujaud2023provable} prove that it does not generally achieve accelerated convergence for smooth strongly convex functions and may even diverge  -- see however \citep{kassing2024polyak} for positive results under stronger smoothness assumptions}

{Accelerated gradient methods have been studied e.g.\ by \citet{josz2023convergence} under much weaker regularity conditions and weaker geometric conditions than (strong) convexity, namely the Kurdyka-Lojasiewicz (KL) condition. Under those weaker assumptions, it is at best possible to prove convergence to a local minimizer at a non-acclerated rate:} Under the (comparatively weak) Polyak-Lojasiewicz (PL) condition, {a special case of the KL condition,} \citet{yue2023lower} show that it is not possible to obtain an accelerated rate of convergence. A slower linear rate of convergence is established by \citet{apidopoulos2022convergence} in continuous time under the assumption that the objective function $f$ satisfies has an $L$-Lipschitz continuous gradient and satisfies the PL-inequality $2\mu\big(f - \inf f\big) \leq \|\nabla f\|^2$. The rate of convergence is
\[
\sqrt{\mu}\left(\sqrt{L/\mu} - \sqrt{L/\mu-1}\right) = \sqrt L\left(1 - \sqrt{1 - \frac\mu L}\right) \approx \sqrt L\cdot \frac\mu{2L} = \frac\mu{2\,\sqrt L}.
\]
A stable time-discretization can generally be attained with effective step-size $1/\sqrt{L}$ for momentum methods \citep[see][Section 2]{su2016differential}, suggesting convergence at the non-accelerated linear rate $(1-\mu/2L)^k$ in discrete time. To the best of our knowledge, no proof has been given yet.

{There have been several efforts to find a reasonable relaxation of convexity for which accelerated convergence can still be achieved.}
\citet{hinder2020near, fu2023accelerated, wang2023continuized, guminov2023accelerated} consider acceleration under the weaker condition that the objective function is $\gamma$-quasar or $(\gamma,\mu)$-strongly quasar-convex, i.e.\ the inequality
\[
\langle \nabla f(x), x- x^*\rangle \geq \gamma\left(f(x) - f(x^*) + \frac\mu2\,\|x-x^*\|^2\right)
\]
holds for any $x\in\R^d$ and {\em any} minimizer $x^*$ of $f$. Compared to (strong) convexity, it relaxes the condition in two ways: It only considers pairs $(x, x^*)$ rather than general pairs of points $x, y\in\R^d$, and it introduces a factor $\gamma$ into the inequality which may be strictly smaller than one. Still, it has geometric implications which may be too strong in the context of deep learning: In the strongly quasar-convex case, minimizers are unique, and in the quasar-convex case, sub-level sets are star-shaped with respect to any minimizer $x^*$ since $f(tx + (1-t)x^*)$ is monotone increasing on $[0,1]$ (see Lemma \ref{lemma geometric implications}, Appendix \ref{appendix convexity and pl}).

Accelerated rates of convergence were obtained by \citet{necoara2019linear} in discrete time and by \citet{aujol2022convergence} in continuous time under the assumption that the objective function is both convex and quasi-strongly convex, and that it has a unique minimizer.
Their results are generalized by \citet{hermant2024studybehaviournesterovaccelerated} who allow for non-smooth composite optimization and consider $\gamma\in(0,1]$ rather than just $\gamma=1$. Unlike the present work, \citet{hermant2024studybehaviournesterovaccelerated} require the uniqueness of minimizers and only study deterministic optimization. Curiously, despite the difference in settings, they independently find the same lower bound on Hessian eigenvalues that we require (compare e.g.\ Theorem \ref{theorem discrete} and \citep[Theorem 2]{hermant2024studybehaviournesterovaccelerated}).
See also \citep[Table 1]{aujol2024heavyballmomentumnonstrongly} for an overview of theoretical guarantees of acceleration without strong convexity.

In overparametrized deep learning, the set of minimizers of the loss function is a (generally curved) manifold, and tangential motion to the manifold can have important implications on the implicit bias of an algorithm \citep{li2021happens, damian2021label}. Any notion that takes into account {\em all} minimizers is quite rigid for such tasks. A more realistic assumption is the `aiming condition' of \citet{liu2024aiming} that
\[
\langle \nabla f(x), x-\pi(x)\rangle \geq \gamma \big(f(x) - \min f\big)
\]
where $\pi(x)$ is the closest minimizer to the point $x$. This notion enjoys much greater flexibility in terms of the global geometry of $f$. \citet{liu2024aiming} investigate the convergence of gradient flows under the aiming condition, but not that of momentum methods.

\subsection{Our contribution}

Our study can be seen as combining geometric ideas pertaining to $(1,\mu)$-quasar convexity and the aiming condition. Our assumption in \eqref{eq first order convexity wrt pi} is equivalent to quasi-strong convexity \citep{necoara2019linear}, but notably we do not make any additional assumptions about the convexity of the objective function or uniqueness of minimizers unlike prior works. Since we focus on the closest minimizer at any point in the trajectory, this presents a unique challenge compared to analyzing the distance from a fixed minimizer. As the current iterate $x_t$ moves, its projection onto the set of minimizers, $\pi(x_t)$, also moves. This requires a modification of the usual Lyapunov function to account for the movement of $\pi(x_t)$ (or the \textit{tangential movement} of $x_t$ parallel to the set of minimizers) as well as the `drift' in directions where the objective function is negatively curved. To the best of our knowledge, this is the first study which takes into account this tangential movement and obtains accelerated convergence.

In overparametrized learning, the set of minimizers of a loss function is a submanifold of high dimension in a usually much higher-dimensional space. {Unless the manifold of minimizers is a linear space, the Hessian of the loss function is geometrically required to have negative eigenvalues in any neighborhood of the set of a minimizer where the manifold is curved.
Still, accelerated methods in first order optimization have been found to be highly successful in deep learning. 
A common heuristic has been that} as long as the objective function is convex in the direction towards the set of minimizers, small negative eigenvalues in directions parallel to the set of minimizers can safely be ignored: Tangential drift along the set of minimizers {should} not affect the decay of the objective function significantly. We prove that this intuition indeed applies in a continuous time model for gradient descent with momentum (Theorems \ref{theorem continuous strongly convex} and \ref{theorem global convergence}) and for Nesterov's time-stepping scheme (Theorems \ref{theorem discrete}, \ref{theorem stochastic additive} and \ref{theorem decreasing learning rate}) in deterministic optimization and stochastic optimization with bounded noise. With `multiplicative (state-dependent) noise' motivated by overparametrized deep learning, we prove an analogous statement for a modified version of Nesterov's algorithm (Theorem \ref{theorem agnes}).

\section{Setting}\label{section setting}

\subsection{Assumptions}\label{section assumptions}
We always make the following assumptions on the regularity and geometry of the function $f$ and its set of minimizers.

\begin{enumerate}
    \item The objective function $f:\R^d\to\R$ is bounded from below, $C^1$-smooth and its gradient $\nabla f:\R^d\to\R^d$ is locally Lipschitz continuous.
    
    \item The set $\mathcal M = \{x\in\R^d : f(x) = \inf_{z\in\R^d}f(z)\}$ of minimizers of $f$
    is a (non-empty) $k$-dimensional $C^2$-submanifold of $\R^d$ for $k<d$.

    \item There exists an open sub-level set $\mathcal U_\alpha = \{ x\in\R^d : f(x) < \alpha\}$ for some $\alpha>0$ such that for every $x\in \mathcal U_\alpha$ there exists a unique $z\in \mathcal M$ which is closest to $x$. We denote $z$ as $\pi(x)$ and assume that the closest point projection map $\pi : \mathcal U_\alpha \to \mathcal M$ is $C^1$-smooth.

    \item $f$ satisfies the {\em $\mu$-strong aiming condition} (with respect to the closest minimizer) in $\mathcal U_\alpha$, i.e.
    \begin{equation}\label{eq first order convexity wrt pi}
    \nabla f(x) \cdot \big(x-\pi(x)\big) \geq f(x) - f \big( \pi(x)\big) + \frac\mu2\,\|x-\pi(x)\|^2\qquad\forall\ x\in \mathcal U_\alpha.
    \end{equation}
\end{enumerate}

These assumptions are significantly weaker than the assumption of strong convexity, but for instance strong enough to imply a PL inequality with constant $\mu$ (see Appendix \ref{appendix convexity and pl}). As illustrated in Section \ref{section deep learning}, they match many geometric features of overparametrized deep learning. For an analysis in discrete time, we will make stronger quantitative assumptions.

            \subsection{Simple Examples}

We give a number of examples which are covered by these assumptions where $f$ is not merely a $\mu$-strongly convex function. The first example illustrates how subtle the interplay between geometric conditions is, even in one dimension.

\begin{example}\label{example 1d}
    For $\eps, R>0$, consider the function $f(x) = \frac{x^2}2 + \frac\eps2\,x^2\sin(2R\,\log(|x|)$. The function $f$ has a unique global minimizer at $x^*=0$ and its derivative $f'$ is a Lipschitz-continuous function with Lipschitz-constant $1 +\eps\sqrt{1+5R^2+4R^4}$. Furthermore, $f$ has an infinite number of strict local minimizers if $\eps\sqrt{1+R^2}>1$, but satisfies favorable geometric properties under stronger assumptions:

\begin{center}
    \renewcommand{\arraystretch}{1.6}
    \begin{tabular}{r|ccc}
        &PL condition& $\mu$-strongly aiming &$\mu$-strongly convex \hspace{2mm}\\\hline 
        Must be $<1$ & $\eps \sqrt{1+R^2}$ &  $\eps \sqrt{1+4R^2}$ & $\eps\sqrt{1+5R^2 + 4R^4}$\\
        Constant & $(1-\eps \sqrt{1+R^2})^2/({1+\eps})$ & $1- \eps\sqrt{1+4R^2}$ & $1-\eps\sqrt{1+5R^2 + 4R^4}$
    \end{tabular}
\end{center}

Evidently, the geometric conditions and associated constants are quite different if $R\gg 1$.
    See Figure \ref{figure 1d example 2} for an illustration of $f$. Further details for the example and a comparison to less common notions such as quasar-convexity are given in Appendix \ref{appendix 1d example}. {We note that the example exploits the fact that $f$ is $C^{1,1}$- but not $C^2$-smooth: For $C^2$-functions, \citet{rebjock2023fast} prove that the PL condition locally implies strong aiming condition.}

    \begin{figure}
    \centering
    \includegraphics[width=.33\textwidth, clip = true, trim = 1cm 0cm 1cm 1cm]{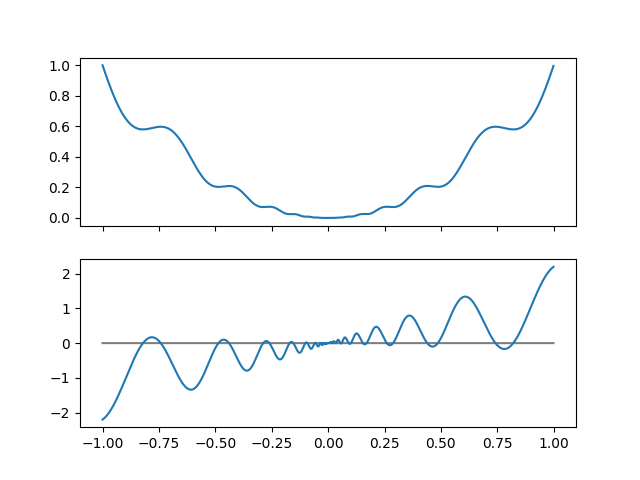}\hfill
    \includegraphics[width=.33\textwidth, clip = true, trim = 1cm 0cm 1cm 1cm]{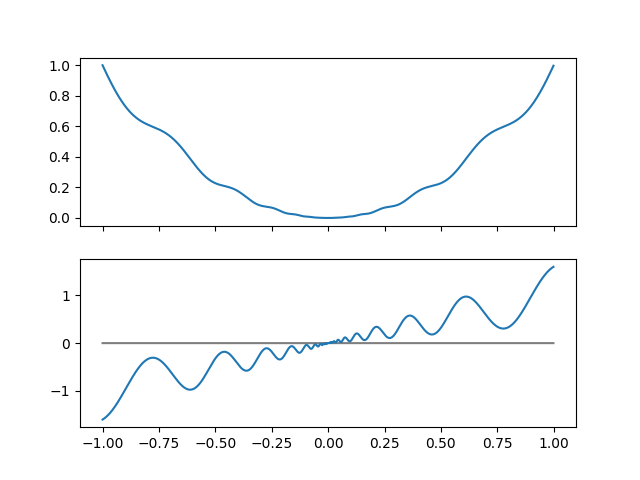}\hfill
    \includegraphics[width=.33\textwidth, clip = true, trim = 1cm 0cm 1cm 1cm]{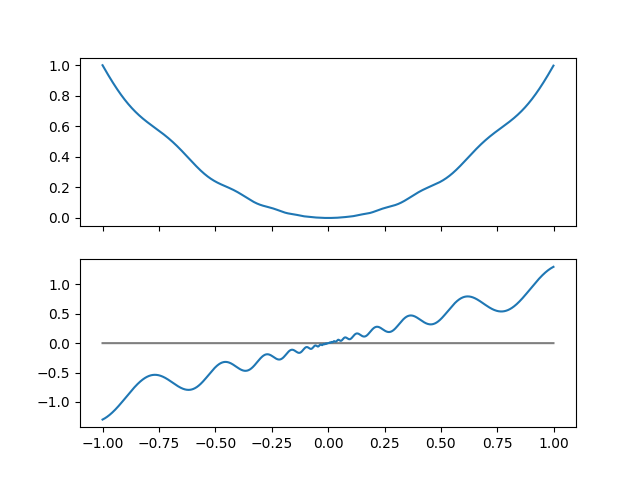}
    \caption{\label{figure 1d example 2}
    We visualize $f$ from Example \ref{example 1d} in the top row and its derivative in the bottom row with $R=2$ and $\eps =0.2$ (left), $\eps=0.1$ (middle) and $\eps = 0.05$ (right). Left: $f$ has many local minimizers as the derivative crosses $0$ an infinite number of times. Middle: $f$ satisfies the PL condition, but not the strong aiming condition. Right: $f$ is strongly aiming (with respect to the unique global minimizer, which implies the PL condition). In all plots, $f$ is non-convex since $f'$ is non-monotone.
    }
\end{figure}
\end{example}

The next example is trivial, but useful to illustrate why tangential movement should not matter.

\begin{example}\label{example product straight}
    Let $\tilde f_1:\R^{d-k} \to[0,\infty)$ be a non-negative $\mu$-strongly convex function such that $\tilde f_1(0) =0$ and let $\tilde f_2:\R^k\to [a,\infty)$ be a continuous function for $a>0$. Define 
    \[
    f:\R^d\to\R, \qquad f(x) = \tilde f_2(x_1,\dots, x_k)\cdot\tilde f_1(x_{k+1}, \dots, x_d).
    \]
    Then $f$ is $a\mu$-strongly aiming $\pi(x) = (x_1, \dots, x_k, 0,\dots,0)$, but not strongly convex since the minimizer is non-unique. Similarly, if $A:\R^k\to\R^{(d-k)\times (d-k)}$ is a function which takes values in the set of symmetric matrices with eigenvalues larger than $\mu$, then 
    \[
    f:\R^d\to\R, \qquad f(x) = \frac12\,(x_{k+1}, \dots, x_d) \,A_{(x_1,\dots, x_k)}\cdot (x_{k+1},\dots, x_d)^T
    \]
    is $\mu$-strongly aiming, but generally non-convex.
\end{example}

\begin{example}\label{example distance squared}
    Let $\mathcal M$ be a compact $C^k$-submanifold of $\R^d$, $k\geq 2$ and $d(x) := \dist(x,\mathcal M)$. Then there exists a `tubular neighborhood' $\mathcal U_\eps = \{x\in\R^d : d(x) < \eps\}$ on which $d$ is $C^k$-smooth and the unique closest point projection $\pi$ is well-defined and $C^{k-1}$-smooth -- see Appendix \ref{appendix differential geometry}.

    Assume that $f:\mathcal U_\eps\to\R$\ is given by $f(x) = \frac\mu2\,d(x)^2$ (and extended arbitrarily to $\R^d\setminus \mathcal U_\eps$). Recall that $\nabla d(x)$ is the unit vector pointing towards the closest point in $\mathcal M$ at all points $x$ where the distance function is smooth, so in particular $\|\nabla d(x)\| = 1$. Thus $\pi(x) = x-d(x)\nabla d(x)$ and
    \begin{align*}
    \nabla f(x) \cdot (x-\pi(x)) &= \mu\,d(x)\nabla d(x) \cdot \big(x - (x-d(x)\nabla d(x))\big) = \mu\,d^2(x) \,\|\nabla d(x)\|^2\\
    &= \frac\mu2\,d^2(x) + \frac\mu2\,d^2(x) =  f(x) - f(\pi(x)) + \frac\mu2\,\|x-\pi(x)\|^2
    \end{align*}
    for $x\in \mathcal U_\eps$, i.e.\ $f$ is $\mu$-strongly aiming. On the other hand, $f$ is not convex unless $\mathcal M$ is. Otherwise, take $x_1, x_2\in\mathcal M$ and $t\in (0,1)$ such that $tx_1 + (1-t)x_2\notin \mathcal M$. Then the map $t\mapsto d^2\big(tx_1 + (1-t)x_2\big)$ attains a maximum inside the interval $(0,1)$, meaning that $d^2$ cannot be convex.
\end{example}

This consideration more generally shows that if the manifold of minimizers $\mathcal M$ of a function $f$ is not perfectly straight, then the objective function cannot be convex -- see also Figure \ref{figure non-convexity}. More precisely:

\begin{lemma}\citep[based on Appendix B]{wojtowytsch2023stochastic}\label{lemma can't be convex}
    Let $f:\R^d\to\R$ be a $C^2$-function and $\mathcal M = \{x\in\R^d : f(x) = \inf f\}$. Assume that $\mathcal M$ is a $k$-dimensional $C^1$-submanifold of $\R^d$, $z\in \mathcal M$, $T_z\mathcal M$ the tangent space at $z$, $r>0$.
    If $\mathcal M\cap B_r(z)$ is not the same set as $(z+ T_z\mathcal M)\cap B_r(z)$, then there exists $x\in B_r(z)$ such that $D^2f(x)$ has a strictly negative eigenvalue.
\end{lemma}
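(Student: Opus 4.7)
The plan is to argue by contradiction: I assume that $D^2 f(x)$ is positive semi-definite for every $x \in B_r(z)$, so that $f$ is convex on the convex set $B_r(z)$, and derive a contradiction.

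The first step is to extract from the hypothesis a point $y \in \mathcal M \cap B_r(z)$ with $y - z \notin T_z\mathcal M$. This is immediate if some point of $\mathcal M \cap B_r(z)$ already lies off the affine tangent space. In the opposite case $\mathcal M \cap B_r(z) \subseteq (z + T_z\mathcal M) \cap B_r(z)$, I would show that the inclusion is actually an equality, contradicting the hypothesis. Near each $w \in \mathcal M \cap B_r(z)$, the $k$-dimensional submanifold $\mathcal M$ is locally contained in the $k$-plane $z + T_z\mathcal M$ and is therefore relatively open in it (a $k$-dimensional submanifold of a $k$-plane is an open subset). Combined with the closedness of $\mathcal M = f^{-1}(\inf f)$ in $\R^d$, this makes $\mathcal M \cap B_r(z)$ a nonempty clopen subset of the connected set $(z + T_z\mathcal M) \cap B_r(z)$, forcing equality. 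Hence a point $y$ as above must exist.

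The second step is a chord-convexity argument. Consider the straight segment $\ell(t) = (1-t)z + ty$ for $t \in [0,1]$; convexity of $B_r(z)$ keeps it inside the ball. Then $g(t) := f(\ell(t))$ is convex on $[0,1]$ with $g(0) = g(1) = \inf f$ (since $z, y \in \mathcal M$), so convexity forces $g \leq \inf f$ on $[0,1]$; combined with $g \geq \inf f$, this yields $g \equiv \inf f$, that is, $\ell([0,1]) \subseteq \mathcal M$. But $\ell$ is then a smooth path inside the submanifold $\mathcal M$ with $\ell(0) = z$, so its velocity $\dot\ell(0) = y - z$ must lie in $T_z\mathcal M$, contradicting the choice of $y$. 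I expect the main obstacle to be the topological argument that produces $y$ from the hypothesis; once that is in hand, the chord-convexity step is a standard trick for ruling out a strict bump between two minimizers of a convex function.
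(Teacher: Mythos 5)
Your proof is correct and follows the same idea the paper sketches (Figure~2 caption and the end of Example~\ref{example distance squared}): connect two minimizers by a segment, use $D^2f\geq 0$ on $B_r(z)$ to force the segment into $\mathcal M$ by chord-convexity, and then read off a contradiction with the tangent-space condition at $z$. Your Step~1 (the invariance-of-domain / clopen argument producing $y\in\mathcal M\cap B_r(z)$ with $y-z\notin T_z\mathcal M$ from the set inequality) is a genuine gap in the paper's one-line sketch which you fill correctly; the rest of the argument is the standard one.
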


\begin{figure}
    \centering
    \includegraphics[height=3.4cm]{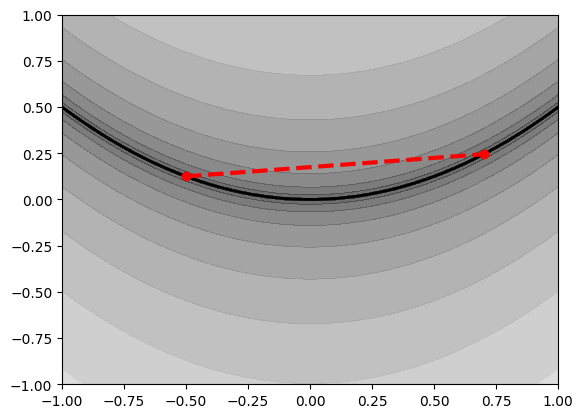}\hfill
    \includegraphics[height=3.4cm]{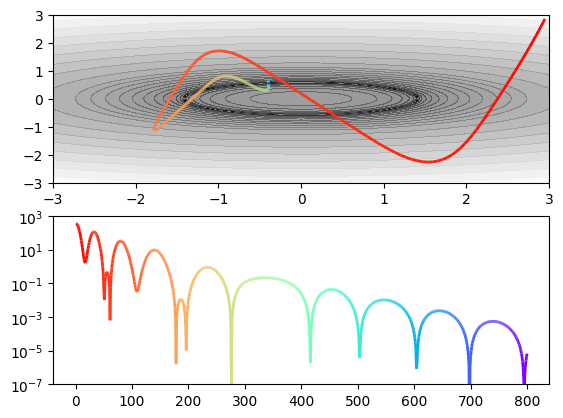}\hfill
    \includegraphics[height=3.4cm]{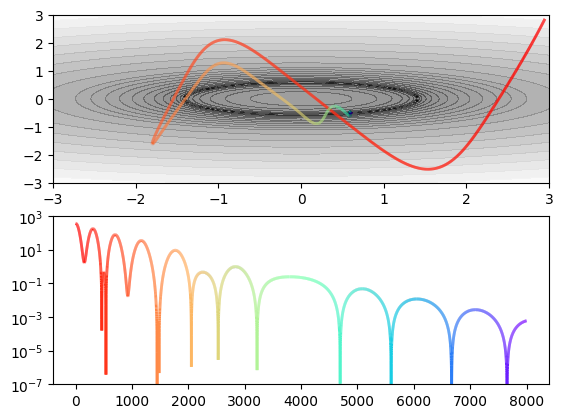}

    \caption{
     \label{figure non-convexity}
    {\bf Left:} The dashed red line connects two minimizers of the function $f$. Along the line, $f$ must achieve an interior local maximum. At this point, the Hessian $D^2f$ cannot be positive definite. {\bf Middle, Right:} Optimization trajectories for Nesterov's method (top) and its associated energy curve (bottom). The selection of limit point may depend crucially on optimization parameters: In the middle plot, we take 800 steps with stepsize $10^{-2}$ while on the right, we take 8,000 steps with stepsize $10^{-3}$ from the same initial point. The decay of $f(x_t)$ is similar for both trajectories, but the limit points on the manifold of minimizers are far apart. The objective function is $f(x,y) = (x^2/2 + 3y^2-1)^2$.
    }
\end{figure}

\subsection{\texorpdfstring{{Connection} to Deep Learning}{Connection to Deep Learning}}\label{section deep learning}

An important class of objective functions are those which combine the geometric features of Examples \ref{example product straight} and \ref{example distance squared}. Such functions can be seen as geometric prototypes for loss functions in {overparametrized regression problems, such as in} deep learning.
Namely, consider a parametrized function class $h:\R^p\times \R^d\to \R$ of weights $w\in \R^p$ and data $x\in\R^d$ (e.g.\ a neural network) and the mean squared error (MSE)  loss function
\begin{equation}\label{eq machine learning loss}
L_y:\R^p\to [0,\infty), \qquad L_y(w) = \frac1{2n}\sum_{i=1}^n \big(h(w, x_i) - y_i\big)^2, \qquad y = (y_1,\dots, y_n)\in\R^n.
\end{equation}
If $h$ is sufficiently smooth in $w$ and for every vector $y\in\R^n$, there exists $w_y\in\R^p$ such that $L_y(w_y) =0$, then \citet{cooper2021global} showed that for Lebesgue-almost all $y\in\R^n$, the set $\mathcal M_y = \{w\in\R^p : L_y(w) = 0\}$ is a $p-n$-dimensional submanifold of $\R^p$. Essentially, the solution set of $n$ equations $h(w,x_i) = y_i$ in $p$ variables is $p-n$-dimensional, much like when $h$ is linear in $w$.

\citet{cooper2021global} demonstrates that the expressivity and smoothness assumptions provably apply to parametrized function classes $h(w,x)$ of sufficiently wide neural networks with analytic activation function such as tanh or sigmoid. \cite{cooper2021global}'s proof involves the regular value theorem and Sard's theorem to show that all gradients $\nabla h_w(w, x_i)$ are linearly independent on almost every level set $\mathcal M_y$. As a byproduct, this implies that the Hessian of the loss function
\begin{align*}
    D^2 L(w) &= \frac1n\sum_{i=1}^n \bigg(\underbrace{\big(h(w, x_i) - y_i\big)}_{=0}D^2_w h(w, x_i) + \nabla_wh(w, x_i) \otimes \nabla_w h(w, x_i)\bigg)
\end{align*}
has full rank $n$ at every $x\in \mathcal M_y$ (for almost every $y$). Thus, all $n$ eigenvalues in direction orthogonal to $\mathcal M$ are non-zero. We prove the following in Appendix \ref{appendix differential geometry}. {The same connection has been made e.g.\ by \citet{rebjock2023fast} for $C^2$-functions and overparametrized regression problems.}

\begin{restatable}{lemma}{application}\label{lemma application}
    Assume that $f:\R^d\to\R$ is $C^2$-smooth and that $\mathcal M = \{x\in\R^d: f(x) = \inf f\}$ is a closed $k$-dimensional $C^2$-submanifold of $\R^d$ (i.e.\ compact and without boundary). If $D^2f(x)$ has rank $d-k$ everywhere on $\mathcal M$, then there exist $\mu, \alpha>0$ such that there exists a $C^1$-smooth closest point projection $\pi:U_\alpha\to\R$ with $U_\alpha = \{x : f(x)<\alpha\}$ %
    and $f$ is $\mu$-strongly aiming with $\pi$.
\end{restatable}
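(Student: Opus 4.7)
\noindent\textbf{Proof plan for Lemma \ref{lemma application}.} The plan is to pin down the spectrum of $D^2f$ on $\mathcal M$, extend positivity uniformly into a tubular neighborhood via compactness and continuity, and then derive \eqref{eq first order convexity wrt pi} from a second-order Taylor expansion along the normal segment from $\pi(x)$ to $x$.

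First I would use that $f$ attains its minimum on $\mathcal M$, so $\nabla f\equiv 0$ on $\mathcal M$ and $f$ is constant along every $C^1$-curve in $\mathcal M$. Differentiating twice along such curves shows $T_z\mathcal M\subseteq\ker D^2f(z)$ for every $z\in\mathcal M$, and the rank-$(d-k)$ hypothesis upgrades this inclusion to equality while forcing $D^2f(z)$ to be positive definite on the normal space $N_z\mathcal M=(T_z\mathcal M)^\perp$. Compactness of $\mathcal M$ combined with continuity of $D^2f$ then yields a uniform $\mu_0>0$ with $v^T D^2f(z)v\geq 2\mu_0\,\|v\|^2$ for all $z\in\mathcal M$ and $v\in N_z\mathcal M$.

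Next, since $\mathcal M$ is a compact $C^2$-submanifold, the tubular neighborhood theorem produces $\eps_0>0$ on which $\pi$ is defined, is $C^1$-smooth, and satisfies $x-\pi(x)\in N_{\pi(x)}\mathcal M$ by the first-order optimality condition for the closest-point problem. Using uniform continuity of $D^2f$ on a compact neighborhood of $\mathcal M$, I would shrink $\eps_0$ so that $v^T D^2f(y)v\geq \mu\,\|v\|^2$ for some fixed $\mu\in(0,2\mu_0]$ whenever $\dist(y,\mathcal M)<\eps_0$, $z\in\mathcal M$ satisfies $\|y-z\|<\eps_0$, and $v\in N_z\mathcal M$. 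The same pointwise positivity gives a quadratic growth estimate $f(x)-\inf f\geq c\,\dist(x,\mathcal M)^2$ close to $\mathcal M$, which lets me pick $\alpha>0$ small enough that $U_\alpha\subseteq\{x:\dist(x,\mathcal M)<\eps_0\}$.

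For the inequality itself, fix $x\in U_\alpha$, set $z=\pi(x)$ and $v=x-z\in N_z\mathcal M$, and integrate $D^2f$ along $t\mapsto z+tv$. Using $\nabla f(z)=0$ one obtains
\[
\nabla f(x)\cdot v = \int_0^1 v^T D^2f(z+tv)\,v\dt, \qquad f(x)-f(z) = \int_0^1 (1-t)\,v^T D^2f(z+tv)\,v\dt,
\]
so subtraction gives
\[
\nabla f(x)\cdot (x-\pi(x)) - \bigl(f(x)-f(\pi(x))\bigr) = \int_0^1 t\,v^T D^2f(z+tv)\,v\dt \geq \frac{\mu}{2}\,\|x-\pi(x)\|^2,
\]
which is exactly \eqref{eq first order convexity wrt pi}. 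The hard part will be precisely the lower bound on the integrand: the direction $v$ is normal to $\mathcal M$ at $z$, not at the moving base point $z+tv$, and $D^2f$ may have negative eigenvalues off $\mathcal M$. Obtaining a single $\mu>0$ that works across all of $U_\alpha$ rather than only pointwise on $\mathcal M$ is where compactness of $\mathcal M$, uniform continuity of $D^2f$, and the quantitative tubular neighborhood theorem have to be combined carefully.
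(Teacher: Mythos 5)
Your proposal is correct and follows essentially the same route as the paper's proof: double differentiation of $f$ along curves in $\mathcal M$ to show $T_z\mathcal M\subseteq\ker D^2f(z)$, the rank hypothesis to get positivity of $D^2f$ on normal directions, compactness for a uniform lower bound $\mu_0>0$, extension into a tubular neighborhood by uniform continuity, quadratic growth to ensure $U_\alpha$ sits inside the tube, and the integral-remainder Taylor expansion along $t\mapsto \pi(x)+t(x-\pi(x))$ with $\nabla f(\pi(x))=0$. The only real difference is expository: you invoke the tubular neighborhood theorem to get the $C^1$ projection $\pi$ and the orthogonality $x-\pi(x)\in N_{\pi(x)}\mathcal M$, whereas the paper constructs $\pi$ by hand (Gram--Schmidt on tangent frames plus the inverse function theorem applied to the chart $\Psi(\hat y,s)$, and a Lipschitz argument on the injectivity radius $r(z)$ to get a uniform tube). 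The subtlety you flag at the end — bounding $v^T D^2f(z+tv)v$ from below when $v$ is normal at $z$ but the base point moves — is exactly what the paper handles by writing $v^T D^2f(x)v\geq \frac{\lambda_0}{2}\|\Pi^\bot_{\pi(x)}v\|^2 - \eps\|v\|^2$ with $\eps=\lambda_0/2$, which for $x=z+tv$ with $v\in N_z\mathcal M$ gives $\Pi^\bot_{\pi(x)}v=v$ and hence the uniform bound you need; your formulation with $z$ fixed at $\pi(x)$ and $\|y-z\|<\eps_0$ achieves the same thing. The paper's one-dimensional function $g(t)=f(\Psi(\hat y,ts))$ and its strong convexity estimate $g''\geq\lambda_0-\eps$ is exactly your integral computation in different notation, so the two arguments are the same.
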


In particular, $f, \mathcal M$ meet all conditions in Section \ref{section assumptions}. Note that by Lemma \ref{lemma can't be convex}, the loss function $f$ cannot be convex since a compact manifold cannot be perfectly straight everywhere.
Lemma \ref{lemma application} does not apply to networks with the non-smooth ReLU activation $\sigma(z) = \max\{z,0\}$, also here minimizers cannot be isolated due to the continuous scaling symmetry $\sigma(z) = \lambda^{-1}\sigma(\lambda z)$ for $\lambda>0$.

\citet[Theorem 2.6]{wojtowytsch2023stochastic} shows that the assumption that $\mathcal M$ is compact is a simplification and generally does not apply in deep learning. Local versions of Lemma \ref{lemma application} could be proved with $\mu, \alpha$ which are positive functions on the manifold, but not necessarily bounded away from zero. Naturally, this suffices in all cases where we provably remain in a local neighborhood in the course of optimization. We eschew this greater generality for the sake of geometric clarity and commit the pervasive sin of optimization theory for deep learning: We make global assumptions which can only be guaranteed locally. For a further comparison of geometric conditions in optimization and deep learning, see also Appendix \ref{appendix convexity and pl}. 
\begin{restatable}{figure}{figureloss}
    \centering
    \includegraphics[clip = true, trim = 1.2cm 8mm 1.5cm .8cm, width = .33\textwidth]{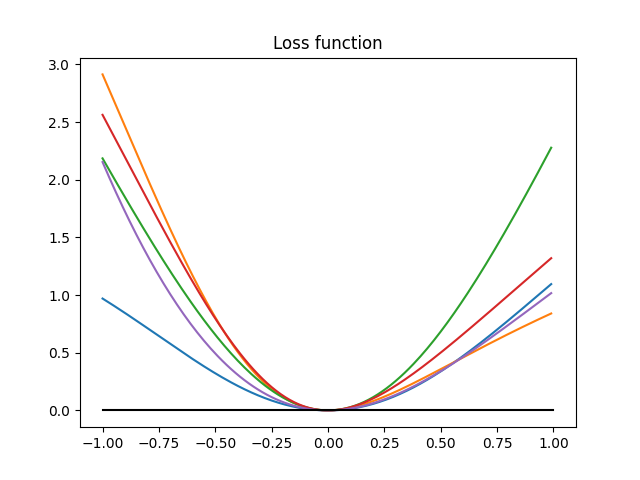}\hfill
    \includegraphics[clip = true, trim = 1.2cm 8mm 1.5cm .8cm, width = .33\textwidth]{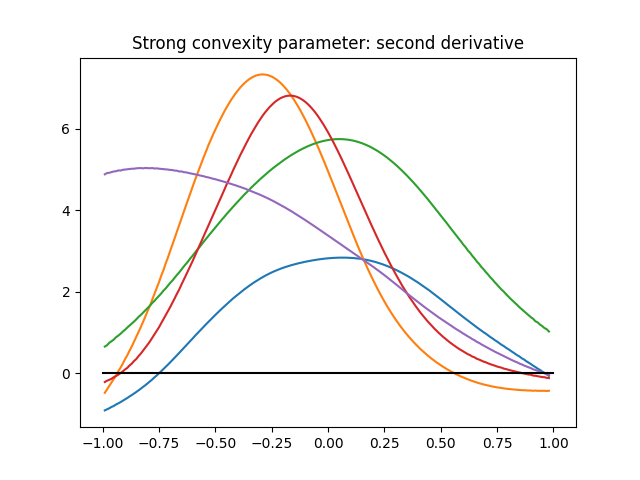}\hfill
    \includegraphics[clip = true, trim = 1.2cm 8mm 1.5cm .8cm, width = .33\textwidth]{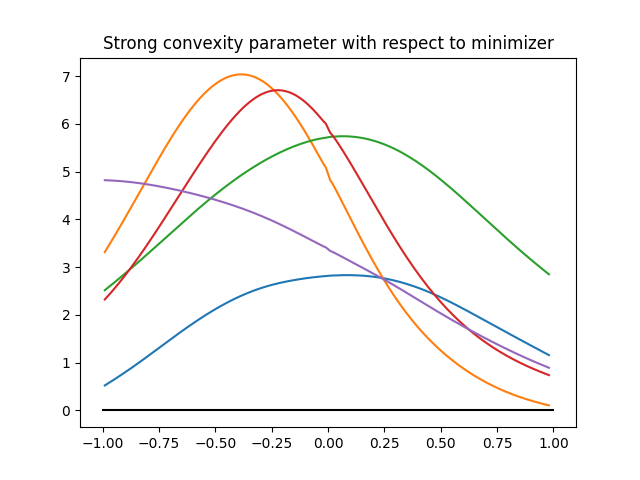}

    \caption{Convexity analysis of \( \phi(t) = L(w+tg) \) for \( w \) near global minimizers of a loss function \( L \) and \( g = \nabla L(w) / \|\nabla L(w)\| \). Left: \( \phi(t) \), middle: second derivative of \( \phi \), right: estimated strong aiming parameter \(\mu\) for \( t \in [-1,1] \). Evidently, $\phi$ is strongly convex in a neighborhood of the  minimizers. Strong aiming condition yields consistently larger constants than the strong convexity parameter obtained from second derivatives. Different colors correspond to different random initializations.}
    \label{figure loss function convexity}
\end{restatable}

We illustrate in Figure \ref{figure loss function convexity} that our assumptions are locally reasonable in deep learning. We trained a fully connected neural network (with 10 layers, width 35, tanh activation) to fit labels \( y_i \) at 100 randomly generated datapoints \( x_i \in \mathbb{R}^{12} \). The small dataset size allowed us to use the exact gradient and loss function instead of stochastic approximations, for a better exploration of the loss landscape. Since the closest minimizer is generally unknown, we use the gradient as a proxy and examine the convexity of \( \phi(t) = L(w+tg) \) for \( w \) very close to the set of global minimizers of the loss function $L$ as in \eqref{eq machine learning loss} and \( g = \nabla L(w) / \|\nabla L(w)\| \). Labels were generated using a randomly initialized teacher network (with 7 layers and width 20). Student networks were trained for 10,000 epochs using stochastic gradient descent with Nesterov momentum, with learning rate $\eta=0.005$ and momentum $\rho=0.99$. Final training loss ranged between \(10^{-12}\) and \(10^{-9}\) across the five runs. Second derivatives were approximated using second order difference quotients $\phi''(t) \approx \frac{\phi(t+h) - 2\phi(t) + \phi(t-h)}{h^2}$ for $h=0.01$.
Similarly, the strong aiming parameter with respect to the global minimizer was estimated by
$2\frac{\phi'(t)t - \phi(t) + \inf \phi}{t^2}$ where $\phi'(t)$ was estimated as $\frac{\phi(t+h) - \phi(t-h)}{2h}.$

Due to the inherent randomness of training, geometric behaviors varied. Generally, the loss function was convex along the gradient direction near minimizers, but neighborhood size, magnitude of the second derivative, and steepness of the loss function varied. Some runs exhibited convex but not strongly convex behavior, while others failed to reach zero loss along the line \( w + tg/\|g\| \). The gradient is an imperfect approximation of the minimizer's direction, and rounding errors may occur when it is so small near the set of minimizers.

\section{Main Contributions}\label{section results}

\subsection{Optimization in continuous time}

In this section, we study a continuous time version of gradient descent with (heavy ball or Nesterov) momentum derived by \citet{su2016differential}. Namely, we study solutions of the heavy ball ODE
\begin{equation}\label{eq heavy ball}
\ddot x + \gamma \,\dot x = - \nabla f(x), \qquad x(0) = x_0, \qquad \dot x(0) = 0.
\end{equation}

This is a popular model for the study of accelerated methods in optimization which avoids some of the technicalities of discrete time stepping algorithms while relying on the same core geometric concepts. Our main result is the following.

\begin{restatable}{theorem}{continuoustime}[Continuous time convergence guarantee]\label{theorem continuous strongly convex}
    Assume that $f$ satisfies the assumptions of Section \ref{section assumptions}, $\gamma = 2\sqrt\mu$ and that $x_0$ satisfies $f(x_0)<\alpha$ where $\alpha$ is as in Section \ref{section assumptions}. Then there exists a unique solution $x(t)=x_t$ of \eqref{eq heavy ball} and $f(x_t)<\alpha$ for all $t>0$. Furthermore, $x$ is $C^2$-smooth and
    \[
    f(x_t) - \inf_{z\in\R^d}f(z) \leq e^{-\sqrt\mu\,t} \left(f(x_0) - \inf f + \frac\mu2\,\dist(x_0, \mathcal M)^2\right).
    \]
\end{restatable}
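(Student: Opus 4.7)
The approach is to adapt the classical Lyapunov argument for the heavy ball ODE \eqref{eq heavy ball} in the $\mu$-strongly convex setting, replacing the (here non-unique) minimizer $x^\ast$ by the time-varying closest-point projection $z_t := \pi(x_t) \in \mathcal M$. Local existence, uniqueness and $C^2$-regularity of $x$ on a maximal interval are immediate from Picard-Lindel\"of since $\nabla f$ is locally Lipschitz; as $\pi$ is $C^1$ on $\mathcal U_\alpha$, the curve $t\mapsto z_t$ is $C^1$ so long as $x_t \in \mathcal U_\alpha$. The candidate Lyapunov function is
\[
V(t) \;=\; e^{\sqrt\mu\,t}\left[\,f(x_t) - \inf f \;+\; \tfrac12\bigl\|\sqrt\mu\,(x_t-z_t) + \dot x_t\bigr\|^2\,\right].
\]

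The heart of the proof is to show $\dot V \le 0$. Substituting $\ddot x = -\nabla f(x) - 2\sqrt\mu\,\dot x$ and using that $f(z_t) \equiv \inf f$, a direct expansion of $\dot V$ produces exactly the terms one meets in the classical strongly-convex calculation (with $z$ playing the role of $x^\ast$), plus the correction $-\mu\,(x-z)\cdot\dot z - \sqrt\mu\,\dot x\cdot\dot z$ stemming from the motion of $z_t$. The classical terms, after combining with $\sqrt\mu\,V$ and invoking the first-order strong convexity \eqref{eq first order convexity wrt pi} at the pair $(x,\pi(x)) = (x,z)$, collapse to $-\tfrac{\sqrt\mu}{2}\|\dot x\|^2$, just as when the minimizer is unique. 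For the two correction terms, two geometric facts close the argument: first, $x - z$ is normal to $\mathcal M$ at $z$ by the variational characterization of closest-point projection while $\dot z$ is tangent to $\mathcal M$ (as $z_t$ lies on $\mathcal M$ for all $t$), so $(x-z)\cdot\dot z \equiv 0$; second, Lemma \ref{lemma auxiliary parallel movement} gives $\dot x \cdot \dot z \ge 0$, whence $-\sqrt\mu\,\dot x\cdot\dot z \le 0$. Thus $\dot V \le 0$, and integrating, together with $\dot x(0) = 0$ and $V(0) = f(x_0) - \inf f + \tfrac\mu 2 \dist(x_0,\mathcal M)^2$, yields the claimed exponential decay of $f(x_t) - \inf f$.

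The only real obstacle I anticipate is confirming that the trajectory stays inside $\mathcal U_\alpha$ for all $t\ge 0$, which is needed for $\pi(x_t)$ to be defined and $C^1$ and hence for the Lyapunov computation to make sense. I plan to handle this by a standard continuation bootstrap: on the maximal subinterval $[0,T^\ast)$ along which $x_t \in \mathcal U_\alpha$, the computation above is valid and yields $f(x_t) - \inf f \le V(0)$; combined with the quadratic-growth consequence of \eqref{eq first order convexity wrt pi} (compare Appendix \ref{appendix convexity and pl}), this prevents $x_t$ from approaching $\partial\mathcal U_\alpha$ and forces $T^\ast = +\infty$. Once this trapping is established, the monotonicity of $V$ translates directly into the stated convergence estimate.
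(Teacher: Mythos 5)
Your Lyapunov computation is the same as the paper's: both use
$\L(t)=f(x_t)-f(\pi(x_t))+\tfrac12\|\dot x_t+\sqrt\mu\,(x_t-\pi(x_t))\|^2$
(your $V=e^{\sqrt\mu t}\L$), close the derivative estimate with the first-order strong convexity inequality \eqref{eq first order convexity wrt pi}, and dispose of the two extra terms from the moving projection exactly as you describe, via the orthogonality $\langle x_t-z_t,\dot z_t\rangle=0$ and Lemma~\ref{lemma auxiliary parallel movement} giving $\langle\dot x_t,\dot z_t\rangle\ge 0$. That part is right and is the paper's proof.

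The gap is in the trapping step, which you correctly identify as the delicate point but then handle incorrectly. You propose to keep $x_t\in\mathcal U_\alpha$ by bootstrapping from the Lyapunov bound $f(x_t)-\inf f\le V(0)=f(x_0)-\inf f+\tfrac\mu2\dist(x_0,\mathcal M)^2$. But the hypothesis only gives $f(x_0)-\inf f<\alpha$; it does \emph{not} give $V(0)<\alpha$, and in fact from the quadratic-growth estimate $f(x_0)-\inf f\ge \tfrac\mu2\dist(x_0,\mathcal M)^2$ one only gets $V(0)\le 2\big(f(x_0)-\inf f\big)<2\alpha$, which is not enough. So your continuation argument does not preclude $f(x_t)$ from crossing $\alpha$ at some small time $T^*$; at such a time $\L(T^*{-})\ge\alpha$ while $e^{-\sqrt\mu T^*}\L(0)$ can still exceed $\alpha$, so there is no contradiction. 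The quadratic-growth consequence of \eqref{eq first order convexity wrt pi} that you invoke bounds distance by function value and is not relevant here, because $\mathcal U_\alpha$ is a sublevel set of $f$, not a tubular neighbourhood.

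The correct (and much cheaper) trapping argument is the one the paper uses, and it does not need the Lyapunov function at all: the \emph{total mechanical energy}
\[
E(t)=f(x_t)+\tfrac12\|\dot x_t\|^2
\]
satisfies $\dot E=-2\sqrt\mu\|\dot x_t\|^2\le 0$ along the heavy-ball flow, and since $\dot x_0=0$ one gets $f(x_t)\le E(t)\le E(0)=f(x_0)<\alpha$ for all $t\ge 0$. This holds on the whole maximal interval of existence with no reference to $\pi$ or $\mathcal U_\alpha$, simultaneously yields global existence (bounded velocity prevents finite-time blow-up), and only then does one deploy the Lyapunov estimate. Replace your bootstrap by this energy-monotonicity argument and the proof is complete.
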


The proof can be found in Appendix \ref{appendix acceleration continuous}. There, we prove that the Lyapunov function
\[
\L(t) = f(x_t) - f(\pi(x_t)) + \frac12\left\|\dot x_t + \sqrt\mu\big(x_t- \pi(x_t)\big)\right\|^2
\]
satisfies $\L(t) \leq e^{-\sqrt\mu t}\L(0)$. The function $\L$ can be considered as a modified energy with potential energy $f - \inf f + \frac\mu2\|x-\pi(x)\|^2$ and kinetic energy $\frac12\|\dot x\|^2$, but with a more complex quadratic term in which velocity and position interact. Going back to \citep{su2016differential}, this strategy of proof is common in convex optimization. The main difficulty in our more general geometric setting is that the time derivative of $\pi(x_t)$ (i.e.\ tangential velocity) is not zero in general, unless the minimizer is unique in $\mathcal U_\alpha.$ Thus, we have to additionally control for the interaction of the tangential velocity with the other terms in the Lyapunov sequence. One key observation that helps us is that the connecting line $x_t-\pi(x_t)$ is orthogonal to the velocity of $\pi(x_t)$. The other technical tool is the following lemma.

\begin{restatable}{lemma}{parallelmove}\label{lemma auxiliary parallel movement}
    Let $\mathcal M$ be a $C^2$-submanifold of $\R^d$ and $\mathcal U$ an open set containing $\mathcal M$ such that there exists a unique closest point projection $\pi: \mathcal U\to \mathcal M$. Let $x:(-\eps,\eps)\to \mathcal U$ be a $C^1$-curve and $z(t):= \pi\circ x(t)$. Then $\langle \dot x, \dot z\rangle \geq 0$ on $(-\eps, \eps)$.
\end{restatable}

Geometrically, Lemma \ref{lemma auxiliary parallel movement} states that the closest point projection $z$ of a point $x$ does not move in the opposite direction when we move $x$. Despite its geometric simplicity, Lemma \ref{lemma auxiliary parallel movement} is non-trivial and a crucial ingredient in our proofs. Its proof is given in Appendix \ref{appendix differential geometry}.

\citet{venturi2019spurious} show that loss functions in overparametrized deep learning do not have strict local minimizers outside the set of global minimizers (under suitable assumptions). Under a quantitative version of this geometric assumption, we obtain a global version of Theorem \ref{theorem continuous strongly convex}, albeit with less precise quantitative guarantees.

\begin{restatable}{theorem}{globalconvergence}\label{theorem global convergence}
    In addition to the assumptions of Section \ref{section assumptions}, assume that 
    \begin{enumerate}
        \item for every $R>\inf f$, there exists $L_R>0$ such that $\nabla f$ is Lipschitz-continuous with Lipschitz constant $L_R$ on $\mathcal U_R = \{x : f(x) <R\}$ and
        \item there exists a value $\delta>0$ such that $\|\nabla f(x)\|^2 < \delta$ implies that $f(x) -\inf f < \frac\alpha2$.
    \end{enumerate}
    Then, for any $x_0\in \R^d$, there exists $T\geq 0$ such that $x(t) \in \mathcal U_\alpha$ for all $t\geq T$ and such that $f(x(t)) - \inf f \leq \big(3\alpha/2 + \dist(x_T, \mathcal M)^2\big)e^{\sqrt\mu(T-t)}$ for all $t>T$.
\end{restatable}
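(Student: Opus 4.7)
The plan is to proceed in two stages: a \emph{dissipative} stage in which the heavy-ball dynamics drives the trajectory into $\mathcal U_\alpha$ with small velocity and small gradient, followed by an \emph{accelerated} stage where the Lyapunov function from Theorem~\ref{theorem continuous strongly convex} takes over and produces both exponential contraction and confinement to $\mathcal U_\alpha$.

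For the first stage I would begin with global existence: $\nabla f$ is locally Lipschitz so local solutions exist, and the energy $E(t)=f(x_t)-\inf f+\tfrac12\|\dot x_t\|^2$ is non-increasing with $\dot E=-\gamma\|\dot x_t\|^2$, confining the trajectory to $\{f\leq \inf f + E(0)\}$ where $\nabla f$ is Lipschitz by assumption~(1); standard continuation then gives a global $C^2$ solution. Integrating the energy identity yields $\int_0^\infty \|\dot x_t\|^2\,dt<\infty$, and combined with uniform continuity of $\|\dot x_t\|^2$ (via the ODE-induced bound on $\ddot x$), Barbalat's lemma gives $\|\dot x_t\|\to 0$.

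The main obstacle is to promote this to $\liminf_{t\to\infty}\|\nabla f(x_t)\|=0$, which is not automatic in the second-order damped setting. Substituting $\nabla f=-\ddot x-\gamma\dot x$ and integrating by parts on $[T,T+1]$, I would establish
\[
\int_T^{T+1}\|\nabla f(x_t)\|^2\,dt = -\big[\dot x_t\cdot\nabla f(x_t)\big]_T^{T+1} + \int_T^{T+1}\dot x_t^\top D^2 f(x_t)\,\dot x_t\,dt - \gamma\big(f(x_{T+1})-f(x_T)\big),
\]
and each term tends to $0$ as $T\to\infty$: the middle via the Lipschitz bound $\|D^2 f\|\leq L$ on the sublevel set together with $\int\|\dot x\|^2<\infty$; the boundary term because $\|\dot x_t\|\to 0$ while $\|\nabla f(x_t)\|\leq L\,\dist(x_t,\mathcal M)$ remains bounded along the orbit; and the last because $|f(x_{T+1})-f(x_T)|\leq \sup\|\nabla f\|\cdot\sqrt{\int_T^{T+1}\|\dot x\|^2\,dt}\to 0$. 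Hence there exist $t_n\to\infty$ along which both $\|\dot x_{t_n}\|$ and $\|\nabla f(x_{t_n})\|$ vanish.

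By assumption~(2), $\|\nabla f(x_{t_n})\|^2<\delta$ implies $f(x_{t_n})-\inf f<\alpha/2$, so $x_{t_n}\in\mathcal U_{\alpha/2}$; Cauchy--Schwarz applied to \eqref{eq first order convexity wrt pi} further gives $\|x_{t_n}-\pi(x_{t_n})\|\leq (2/\mu)\|\nabla f(x_{t_n})\|\to 0$. I then choose $T:=t_n$ large enough that the Lyapunov function
\[
\L(t) := f(x_t)-\inf f + \tfrac12\big\|\dot x_t + \sqrt\mu\,(x_t-\pi(x_t))\big\|^2
\]
satisfies $\L(T)<\alpha$, using the crude bound $\L(T)\leq (f(x_T)-\inf f)+\|\dot x_T\|^2 + \mu\|x_T-\pi(x_T)\|^2$. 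From this point the argument of Theorem~\ref{theorem continuous strongly convex} applies verbatim while $x_t\in\mathcal U_\alpha$, giving $\dot\L\leq -\sqrt\mu\,\L$ and hence $\L(t)\leq \L(T)e^{-\sqrt\mu(t-T)}<\alpha$; since $f(x_t)-\inf f\leq \L(t)$, a continuity bootstrap prevents $x_t$ from crossing $\partial\mathcal U_\alpha$ for any $t\geq T$. The quoted constant $3\alpha/2+\dist(x_T,\mathcal M)^2$ then arises by absorbing $\|\dot x_T\|^2\leq \alpha$ and $\mu\|x_T-\pi(x_T)\|^2\leq \dist(x_T,\mathcal M)^2$ (after harmless rescaling) into the estimate for $\L(T)$.
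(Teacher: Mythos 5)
Your overall strategy matches the paper's: energy dissipation gives $\int_0^\infty\|\dot x\|^2\,\dt<\infty$, an integration-by-parts identity involving $D^2f$ then controls the gradient, assumption~(2) upgrades smallness of $\nabla f$ to $f(x_T)-\inf f<\alpha/2$ so that $E(T)<\alpha$, and finally the Lyapunov function from Theorem~\ref{theorem continuous strongly convex} takes over. The structure of your gradient estimate (expanding $\|\nabla f\|^2$ via $\nabla f=-\ddot x-2\sqrt\mu\,\dot x$, picking up $\int\dot x^\top D^2f\,\dot x$ and $\frac{d}{dt}f(x_t)$ pieces) is essentially the same identity the paper uses, only localized to $[T,T+1]$ rather than $[0,T]$.

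However, your step that ``$\|\dot x_t\|\to 0$ by Barbalat, while $\|\nabla f(x_t)\|\leq L\,\dist(x_t,\mathcal M)$ remains bounded along the orbit'' has a circularity. Barbalat requires uniform continuity of $\|\dot x_t\|^2$, which requires a bound on $\ddot x_t=-2\sqrt\mu\,\dot x_t-\nabla f(x_t)$ — i.e.\ a bound on $\|\nabla f(x_t)\|$ along the orbit, which is precisely what you are trying to justify. And the proposed bound $\|\nabla f(x_t)\|\leq L\,\dist(x_t,\mathcal M)$ is not available either: nothing in the assumptions makes the sublevel set $\{f<f(x_0)\}$ bounded or $f$ coercive, so $\dist(x_t,\mathcal M)$ (and a fortiori $\|\nabla f(x_t)\|$) is not a priori bounded along the trajectory. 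In fact $\|\dot x_t\|\leq\sqrt{2(f(x_0)-\inf f)}$ only gives $\|x_t-x_0\|=O(\sqrt t)$ after Cauchy--Schwarz against $\int\|\dot x\|^2<\infty$, so the orbit could in principle escape to infinity. Consequently both the Barbalat conclusion and the vanishing of the boundary terms $[\dot x_t\cdot\nabla f(x_t)]_T^{T+1}$ are unjustified as stated.

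The paper avoids this issue entirely by integrating the same identity on $[0,T]$ (so the boundary term at $t=0$ vanishes because $\dot x_0=0$) and by observing that the single remaining boundary term $\langle\nabla f(x_T),\dot x_T\rangle=\frac{d}{dt}f(x_t)\big|_{t=T}$ is the derivative of the bounded continuous function $f(x_t)\in[\inf f, f(x_0)]$; hence $\liminf_{T\to\infty}|\langle\nabla f(x_T),\dot x_T\rangle|=0$ and one can pass to a sequence $T_n$ along which it vanishes. This yields $\int_0^\infty\|\nabla f\|^2\,\dt<\infty$ with no pointwise control on $\|\nabla f(x_t)\|$ whatsoever. If you reorganize your integration-by-parts argument this way — treating the product $\langle\nabla f,\dot x\rangle$ as a unit and extracting only a subsequence — your proof closes without needing boundedness of $\|\nabla f(x_t)\|$ or Barbalat. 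The rest of your argument (the Cauchy--Schwarz bound $\|x-\pi(x)\|\leq(2/\mu)\|\nabla f(x)\|$ from \eqref{eq first order convexity wrt pi}, the confinement $E(T)<\alpha$, and the final Lyapunov estimate producing the constant $3\alpha/2+\dist(x_T,\mathcal M)^2$) is correct and matches the paper.
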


The rate of decay here is $e^{-\sqrt\mu\,t}$, as compared to $e^{-\mu t}$ for gradient flow. We see more clearly why we speak of acceleration in discrete time (see the discussion below Theorem \ref{theorem discrete}).

\begin{remark}
    We conjecture that Theorem \ref{theorem continuous strongly convex} remains valid if the set $\mathcal M$ is convex rather than a smooth manifold. Then $\pi$ is defined globally, but generally only Lipschitz-continuous and not smooth.
\end{remark}

\subsection{Deterministic optimization in discrete time}

We can equivalently write the heavy ball ODE \eqref{eq heavy ball} as a system of two first order ODEs:
\begin{equation}\label{eq heavy ball system}
\begin{pde}
    \dot x &= v\\
    \dot v &= -2\sqrt\mu\,v-\nabla f(x),
\end{pde}\qquad x(0) = x_0, \qquad v(0) = 0.
\end{equation}
We choose a time-stepping scheme
\begin{equation}\label{eq nesterov}
x'_n = x_n + \sqrt\eta v_n, \qquad g_n= \nabla f(x'_n),\qquad x_{n+1} = x'_n - \eta g_n, \qquad v_{n+1} = \rho \big(v_n- \sqrt\eta g_n\big)
\end{equation}
for $\rho = \frac{1-\sqrt{\mu\eta}}{1+\sqrt{\mu\eta}} = 1-2\sqrt{\mu\eta} + O(\eta)$. In particular, we have
\[
x_{n+1} = x_n + \sqrt\eta\,v_n + O(\eta), \qquad v_{n+1} = v_n-\sqrt\eta\big(2\sqrt\mu\,v_n +\nabla f(x_n)\big) + O(\eta),
\]
i.e.\ the scheme is a time discretization of the heavy ball system \eqref{eq heavy ball system} with time-step size $\sqrt\eta$. The square root is chosen for consistency with the literature. This scheme is a geometrically intuitive reparametrization of Nesterov's accelerated gradient descent algorithm, except for the fact that Nesterov's scheme typically begins with the gradient descent step rather than the momentum step \citep[see][Appendix B, for a proof of equivalence]{gupta2023achieving}.

In discrete time, we need to make additional quantitative regularity assumptions on both $f$ and the projection map $\pi$ in order to ensure that the time step size is sufficiently small to recover the continuous time behavior. Note that the Hessian of the objective function $f$ is positive semi-definite on the set of global minimizers $\mathcal M$, i.e.\ it stands to reason that any negative eigenvalues of $D^2f$ should be small, at least close to $\mathcal M$. We note the following.

\begin{restatable}{lemma}{smallnegeigen}\label{lemma small negative eigenvalues}
Assume that $D^2f(x)\geq -\eps$ in a ball $B_r(x_0)$. Then
\[
\langle \nabla f(x), x-z\rangle \geq f(x) - f(z) - \frac\eps2\,\|x-z\|^2\qquad \forall\ x,z\in B_r(x_0).
\]
\end{restatable}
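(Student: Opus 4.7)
The approach is to reduce the statement to the classical first-order characterization of convexity by adding a quadratic regularizer. First, I would introduce the auxiliary function $g(x) := f(x) + \tfrac{\eps}{2}\|x\|^2$. Its Hessian satisfies $D^2 g(x) = D^2 f(x) + \eps\, I \geq 0$ everywhere on $B_r(x_0)$, and this set is convex, so $g$ is convex on $B_r(x_0)$. In particular, the standard first-order convexity inequality
\[
\langle \nabla g(x), x-z\rangle \geq g(x) - g(z) \qquad \forall\ x, z\in B_r(x_0)
\]
is available.

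The second step is routine algebra. Substituting $\nabla g(x) = \nabla f(x) + \eps\,x$ and $g = f + \tfrac{\eps}{2}\|\cdot\|^2$ into the inequality above yields
\[
\langle \nabla f(x), x-z\rangle + \eps\langle x, x-z\rangle \geq f(x) - f(z) + \tfrac{\eps}{2}\bigl(\|x\|^2 - \|z\|^2\bigr).
\]
Using the polarization identity $\eps\langle x, x-z\rangle - \tfrac{\eps}{2}\bigl(\|x\|^2 - \|z\|^2\bigr) = \tfrac{\eps}{2}\|x-z\|^2$, this rearranges into the desired bound $\langle \nabla f(x), x - z\rangle \geq f(x) - f(z) - \tfrac{\eps}{2}\|x - z\|^2$.

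There is no serious obstacle; the only point worth checking is that $B_r(x_0)$ is convex, so the segment from $x$ to $z$ stays inside the region where the lower Hessian bound is assumed (this is what legitimizes calling $g$ convex on the ball). An essentially equivalent route would be Taylor's theorem with integral remainder along this segment, writing
\[
f(z) - f(x) - \langle \nabla f(x), z-x\rangle = \int_0^1 (1-t)\,\bigl\langle D^2 f\bigl(x + t(z-x)\bigr)(z-x),\, z-x\bigr\rangle\,\dt \geq -\tfrac{\eps}{2}\|z-x\|^2,
\]
which, after rearrangement, gives the same conclusion without introducing the auxiliary function $g$. I would present the regularization argument, since it is marginally cleaner and makes the connection to the convex case manifest.
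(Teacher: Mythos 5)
Your regularization argument is exactly the paper's proof: both define $g = f + \tfrac{\eps}{2}\|\cdot\|^2$, note $D^2 g \geq 0$ so $g$ is convex on the ball, apply the first-order convexity inequality, and rearrange. The Taylor-with-integral-remainder variant you mention in passing is a fine alternative, but since you ultimately present the same route as the paper, there is nothing to reconcile.
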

This lemma informs our geometric assumptions on the objective function $f$. For the closest point projection $\pi$, we assume that $\pi(x) =  P x + x^*$ for some (linear) orthogonal projection $ P$ onto a subspace $V\subseteq\R^d$ and a fixed vector $x^*$ which is the unique element with the smallest Euclidean norm in the affine space $\tilde V = x^*+V$. The assumption that the derivative $D\pi\equiv  P$ is constant is a (very restrictive) geometric linearization, and relaxing it is an important subject of future research. Still, it applies to many functions which are not convex, such as those with a unique minimizer (i.e.\ $ P\equiv 0$) or those in Example \ref{example product straight}. With an eye towards stochastic optimization, we opt for the simpler global geometric assumptions, see also Remark \ref{remark global projections}.

\begin{restatable}{theorem}{discrete}\label{theorem discrete}
Assume that $f$ is $L$-smooth and the sequences $x_n, x_n', v_n$ are generated according to the Nesterov scheme \eqref{eq nesterov} with parameters $\eta \leq 1/ L$ and $\rho = (1-\sqrt{\mu\eta})/(1+\sqrt{\mu\eta})$. Assume further that
there exists an affine linear projection map $\pi(x) =  P x+x^*$ such that
\begin{equation}\label{eq assumption convexity wrt minimizer}
\langle \nabla f(x), x-\pi(x)\rangle \geq f(x) -f(\pi(x)) + \frac\mu2 \,\|x-\pi(x)\|^2.
\end{equation}
Finally, assume that for arbitrary $x, v\in\R^d$ we have
\begin{equation}\label{eq assumption mild non-convexity}
\langle \nabla f(x+ v), v\rangle \geq f(x+ v) - f(x) - \frac\eps2\,\| v\|^2%
\end{equation}
with some $\eps \leq \sqrt{\mu/\eta}$. Then
\begin{align*}
    f(x_n) - \inf f \leq (1-\sqrt{\mu\eta})^n \left[ f(x_0) - \inf f + \frac\mu2 \|x_0 - \pi(x_0)\|^2 \right].
\end{align*}
\end{restatable}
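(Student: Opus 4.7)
The plan is to mirror the Lyapunov argument of Theorem \ref{theorem continuous strongly convex} in discrete time. I would define
\[
\L_n := f(x_n) - \inf f + \tfrac12\bigl\|v_n + \sqrt\mu\,(x_n - \pi(x_n))\bigr\|^2
\]
(possibly with slightly different weights on the tangent and normal components of $v_n$ as dictated by the algebra), observe that $\L_0 = f(x_0) - \inf f + \tfrac\mu2\|x_0 - \pi(x_0)\|^2$ because $v_0 = 0$, and aim for the one-step contraction $\L_{n+1} \leq (1-\sqrt{\mu\eta})\L_n$. Iterating and using $f(x_n) - \inf f \leq \L_n$ then yields the theorem.

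The affine structure $\pi(x) = \Pi x + x^*$ is central because it lets the projection commute with the linear part of the updates:
\begin{align*}
x'_n - \pi(x'_n) &= (x_n - \pi(x_n)) + \sqrt\eta\,(I - \Pi)v_n,\\
x_{n+1} - \pi(x_{n+1}) &= (x'_n - \pi(x'_n)) - \eta\,(I - \Pi)g_n.
\end{align*}
Combined with $v_{n+1} = \rho(v_n - \sqrt\eta\,g_n)$ and the identity $\rho(1+\sqrt{\mu\eta}) = 1-\sqrt{\mu\eta}$, expanding the quadratic part of $\L_{n+1}$ produces an explicit factor $(1-\sqrt{\mu\eta})$ times its value at step $n$, plus remainder terms that are purely quadratic in $g_n$, $v_n$, and $x_n - \pi(x_n)$. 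To handle the first-order change $f(x_{n+1}) - (1-\sqrt{\mu\eta})\bigl(f(x_n) - \inf f\bigr)$ I would combine three standard inputs: the descent lemma from $L$-smoothness with $\eta \leq 1/L$, giving $f(x_{n+1}) \leq f(x'_n) - \tfrac\eta2\|g_n\|^2$; first-order $\mu$-strong convexity applied at the extrapolated point $x'_n$, giving $\langle g_n,\, x'_n - \pi(x'_n)\rangle \geq f(x'_n) - \inf f + \tfrac\mu2\|x'_n - \pi(x'_n)\|^2$; and the mild non-convexity inequality \eqref{eq assumption mild non-convexity} with $x = x_n$, $v = \sqrt\eta\,v_n$, giving $\sqrt\eta\,\langle g_n, v_n\rangle \geq f(x'_n) - f(x_n) - \tfrac{\eps\eta}{2}\|v_n\|^2$ so that $f(x_n)$ (appearing in $\L_n$) can be traded for $f(x'_n)$ (where the gradient is actually evaluated). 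After these substitutions, every occurrence of $f$ has been converted into a gradient inner product plus a controlled quadratic penalty.

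The main obstacle is then purely algebraic: to show that the sum of all residual quadratic terms in $g_n$, $v_n$, and $x_n - \pi(x_n)$ is non-positive. The $\tfrac{\eps\eta}{2}\|v_n\|^2$ defect from the mild non-convexity step must be absorbed into the $\mu\eta$-scale $\|v_n\|^2$ contribution produced by the $(1-\sqrt{\mu\eta})$ factor in the Lyapunov expansion, which is exactly what the hypothesis $\eps \leq \sqrt{\mu/\eta}$ is tailored to permit. The $\|g_n\|^2$ residuals coming from $\|v_{n+1}\|^2 = \rho^2\bigl(\|v_n\|^2 - 2\sqrt\eta\,\langle v_n, g_n\rangle + \eta\|g_n\|^2\bigr)$ are absorbed by the $\tfrac\eta2\|g_n\|^2$ reservoir from the descent lemma (which is precisely why $\eta\leq 1/L$ is imposed), and the mixed terms such as $\langle g_n, v_n\rangle$ and $\langle g_n,\, x_n - \pi(x_n)\rangle$ either cancel directly against the first-order inputs or are split via Young's inequality using the two quadratic reservoirs above. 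The affine hypothesis on $\pi$ removes the need for a discrete analogue of Lemma \ref{lemma auxiliary parallel movement}: the rigid orthogonality $(I-\Pi)\Pi = 0$ plays the role of the sign estimate $\langle \dot x, \dot z\rangle \geq 0$ used in continuous time.
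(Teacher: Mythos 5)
Your proposal is correct and follows essentially the same route as the paper: a Lyapunov contraction $\L_{n+1}\leq(1-\sqrt{\mu\eta})\L_n$ driven by the descent lemma, strong convexity at $x_n'$, and the mild-nonconvexity inequality with $v=\sqrt\eta\,v_n$, with the affine projection replacing Lemma~\ref{lemma auxiliary parallel movement}. Your hedge about ``slightly different weights on the tangent and normal components of $v_n$'' is exactly the right instinct: the paper's Lyapunov function indeed weights $\Pi v_n$ by $\tfrac{(1+\sqrt{\mu\eta})^2}{2(1-\sqrt{\mu\eta})}$ and pairs $\Pi^\bot v_n$ with the extrapolated normal position $\sqrt\mu\,\Pi^\bot x_n'$ rather than $\sqrt\mu\,\Pi^\bot x_n$, and these adjustments are what make the residual quadratic terms vanish cleanly under $\eps\leq\sqrt{\mu/\eta}$.
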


The proof, given in Appendix \ref{appendix acceleration discrete}, builds on similar ideas as Theorem \ref{theorem continuous strongly convex}, with additional complications introduced by the discrete-time setting. We have to modify the usual Lyapunov sequence used for strongly convex functions. We treat the tangential and normal components of the velocity (i.e.\ $ P v_n$ and $ P^\bot v_n = (I- P)v_n$) separately, and carefully choose coefficients to ensure that the following Lyapunov sequence decays at each step,
\begin{align*}
    \L_n = f(x_n) - \inf f + \frac12\|\  P^\bot v_n + \sqrt\mu (x_n' -\pi(x_n')) \|^2 + \frac{(1+\sqrt{\mu\eta})^2}{2(1-\sqrt{\mu\eta})}\| P v_n\|^2.
\end{align*}
In Theorem \ref{theorem discrete}, we see more clearly than in Theorem \ref{theorem continuous strongly convex} why we talk of acceleration: While gradient descent would achieve a decay rate of $(1-\mu/L)^n$ with the commonly proposed step size $\eta=1/L$ in discrete time based on our assumptions, Nesterov's method achieves decay like $(1-\sqrt{\mu/L})^n$ with $\eta = 1/L$. Since $\mu/L\leq 1$, Nesterov's method converges much faster than gradient descent.

\begin{remark}\label{remark magnitude negative eigenvalues}
Note that the negative eigenvalues of the Hessian may be as large as $\sqrt{\mu/\eta}$ in Theorem \ref{theorem discrete}. If $\eta$ is chosen as large as $1/L$, this is a real restriction of the eigenvalues to the range $[-\sqrt{\mu L}, L]$. However, since the Hessian eigenvalues of an $L$-smooth function are in $[-L, L]$ a priori, there is no additional restriction if $\sqrt{\mu/\eta}\geq L$, i.e.\ if $\eta < \frac{\mu}{L^2}$. This corresponds to the continuous time guarantee of Theorem \ref{theorem continuous strongly convex}, which does not depend on the magnitude of the negative eigenvalues. 

However, if the eigenvalues of $D^2f$ are as negative as $-L$, the step size $\eta = \mu/L^2$ is so small that it does not improve upon gradient descent with step size $\eta=1/L$ since $\sqrt{\mu\eta}= \mu/L$ in this case. Thus, if $f$ is too far from being convex, acceleration may not be achievable in discrete time. Notably, especially close to the set of global minimizers, we can picture $\eps$ as small compared to $L$.

\end{remark}

\subsection{Stochastic optimization in discrete time}

In typical applications in deep learning, the gradient $\nabla f$ of the objective function/loss function $f$ is prohibitively expensive to evaluate, but we have access to stochastic estimates of the true gradient. In this section, in addition to the assumptions of Theorem \ref{theorem discrete}, we assume that we are given a probability space $(\Omega, \mathcal A, \mathbb Q)$ and a measurable function $g:\R^d\times \Omega \to\R^d$ such that
    \begin{equation}\label{eq stochastic gradient}
    \E_{\omega\sim\mathbb Q}\big[g(x,\omega)\big]= \nabla f(x), \qquad \E_{\omega\sim\mathbb Q}\big[\|g(x,\omega)\|^2\big] < +\infty\qquad\forall\ x\in \R^d.
    \end{equation}
For quantitative statements, a more precise assumption on the variance of the gradient estimates must be made. We make the modelling assumption
\begin{equation}\label{eq variance bound}
    \E_{\omega\sim\mathbb Q}\big[\|g(x,\omega) - \nabla f(x)\|^2\big]
    \leq \sigma_a^2 + \sigma_m^2\|\nabla f(x)\|^2\qquad\forall\ x\in\R^d.
\end{equation}
We call $\sigma_a$ the additive standard deviation and $\sigma_m$ the multiplicative standard deviation since they resemble the prototypical example $g = (1+\sigma_m N_1)\nabla f + \sigma_aN_2$ where $N_1, N_2$ are random variables with mean zero and variance one. The case of purely additive noise (i.e.\ $\sigma_m=0$) is classical and hails back to the seminal article of \cite{robbins1951stochastic}. The case of purely multiplicative noise is much closer to reality in overparametrized learning: If all data points can be fit exactly, there is no noise when estimating the gradient of the empirical risk/training loss on the set of global minimizers. It has received significant attention more recently by \cite{liu2018mass, bassily2018exponential, vaswani2019fast, even2021continuized, wojtowytsch2023stochastic, gupta2023achieving} and others.

Let us consider the purely additive case first. We follow the scheme \eqref{eq nesterov}, but we replace the deterministic gradient $\nabla f(x_n')$ by $g_n= g(x_n', \omega_n)$ where $\omega_0, \omega_1, \dots$ are drawn from $\Omega$ independently of each other and the initial condition $x_0$ with law $\mathbb Q$. This framework allows e.g.\ for minibatch sampling (but assumes that all batches are drawn independently of each other from the dataset).

\begin{restatable}{theorem}{additive}[Acceleration with additive noise]\label{theorem stochastic additive}
Assume that $f, P$ are as in Theorem \ref{theorem discrete} and that the $g$ satisfies \eqref{eq stochastic gradient} and \eqref{eq variance bound} with $\sigma_m=0$. Assume that the sequences $x_n, x_n', v_n$ are generated by the scheme \eqref{eq nesterov} for parameters $\eta \leq 1/L$ and $\rho = \frac{1-\sqrt{\mu\eta}}{1+\sqrt{\mu\eta}}$, but with the stochastic gradient estimates $g(x_n', \omega_n)$ with independently identically distributed $\omega_n$ in place of $\nabla f(x_n')$. Then
\begin{align*}
\mathbb{E}[f(x_n) - \inf f] &\leq (1-\sqrt{\mu\eta})^n \left[f(x_0) - \inf f + \frac{\mu}{2}\|x_0 - \pi(x_0)\|^2\right] \
+ \frac{\sigma_a^2\sqrt\eta}{\sqrt{\mu}}.
\end{align*}
\end{restatable}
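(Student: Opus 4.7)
The plan is to lift the Lyapunov-based contraction proof of Theorem~\ref{theorem discrete} to the stochastic setting by carefully tracking the noise via conditional expectations. Let $\mathcal{F}_n := \sigma(\omega_0, \ldots, \omega_{n-1})$, so that $x_n, v_n, x_n'$ are $\mathcal{F}_n$-measurable, and set $\xi_n := g(x_n', \omega_n) - \nabla f(x_n')$, which satisfies $\E[\xi_n \mid \mathcal{F}_n] = 0$ and $\E[\|\xi_n\|^2 \mid \mathcal{F}_n] \leq \sigma_a^2$. Writing $\tilde{x}_{n+1}, \tilde{v}_{n+1}$ for the one-step deterministic updates obtained from $(x_n, v_n, x_n')$ using the exact gradient $\nabla f(x_n')$, the stochastic iterates split as
\[
x_{n+1} = \tilde{x}_{n+1} - \eta\, \xi_n, \qquad v_{n+1} = \tilde{v}_{n+1} - \rho\sqrt{\eta}\, \xi_n.
\]

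Using the Lyapunov function $\L_n := f(x_n) - \inf f + \tfrac{1}{2}\|v_n + \sqrt{\mu}\,(x_n - \pi(x_n))\|^2$ (or the precise variant employed in the deterministic proof), I would expand $\L_{n+1}$ around the noise-free trajectory. By $L$-smoothness of $f$,
\[
f(x_{n+1}) \leq f(\tilde{x}_{n+1}) - \eta\, \langle \nabla f(\tilde{x}_{n+1}), \xi_n\rangle + \tfrac{L\eta^2}{2}\,\|\xi_n\|^2.
\]
The hypothesis $\pi(x) = \Pi x + x^*$ gives $x_{n+1} - \pi(x_{n+1}) = \tilde{x}_{n+1} - \pi(\tilde{x}_{n+1}) - \eta(I - \Pi)\xi_n$, and therefore
\[
v_{n+1} + \sqrt{\mu}\,(x_{n+1} - \pi(x_{n+1})) = \bigl[\tilde{v}_{n+1} + \sqrt{\mu}\,(\tilde{x}_{n+1} - \pi(\tilde{x}_{n+1}))\bigr] - \bigl(\rho\sqrt{\eta}\,I + \sqrt{\mu}\,\eta\,(I-\Pi)\bigr)\xi_n.
\]
Expanding the square in $\L_{n+1}$, every cross term linear in $\xi_n$ has an $\mathcal{F}_n$-measurable coefficient and so vanishes under $\E[\,\cdot \mid \mathcal{F}_n]$, while the pure-noise quadratic contribution is at most $C_1\,\eta\,\|\xi_n\|^2$ with $C_1$ of order $1$ for $\eta \leq 1/L$. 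Combined with the deterministic contraction $\widetilde{\L}_{n+1} \leq (1-\sqrt{\mu\eta})\,\L_n$ that Theorem~\ref{theorem discrete} supplies for the noise-free one-step update, this yields the perturbed contraction
\[
\E[\L_{n+1} \mid \mathcal{F}_n] \leq (1-\sqrt{\mu\eta})\,\L_n + C\,\eta\,\sigma_a^2.
\]

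Taking total expectations, iterating, and using the geometric series bound $\sum_{k=0}^{n-1}(1-\sqrt{\mu\eta})^k \leq 1/\sqrt{\mu\eta}$ gives $\E[\L_n] \leq (1-\sqrt{\mu\eta})^n\,\L_0 + \tfrac{C\sqrt{\eta}}{\sqrt{\mu}}\,\sigma_a^2$. Substituting $\L_0 = f(x_0)-\inf f + \tfrac{\mu}{2}\|x_0-\pi(x_0)\|^2$ (valid because $v_0 = 0$) and bounding $\E[f(x_n)-\inf f] \leq \E[\L_n]$ then delivers the stated inequality, up to the explicit value of the constant on the noise term. The main technical obstacle is ensuring that the additional $\tfrac{L\eta^2}{2}\|\xi_n\|^2$ term coming from $L$-smoothness is truly absorbed into the additive noise budget rather than eroding the contraction factor, since the deterministic proof already saturates the constraint $\eta L \leq 1$; this requires matching the combinatorics of the deterministic Lyapunov decrement to the scale of the noise terms. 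The affinity of $\pi$ is the other critical input: without it, $\pi(x_{n+1}) - \pi(\tilde{x}_{n+1})$ would contain contributions nonlinear in $\eta\,\xi_n$ that couple the noise to the curvature of $\M$ and would require a substantially more delicate analysis.
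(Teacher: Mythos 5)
Your plan is correct and follows essentially the same route as the paper: same discrete-time Lyapunov sequence (you hedge appropriately about its exact form, which in the paper separately weights the tangential and normal components of $v_n$), same use of the martingale property to kill cross terms linear in $\xi_n$, same variance bound for the quadratic noise terms, and same geometric-series recursion to close. The paper phrases the step more tersely — it re-runs the Theorem~\ref{theorem discrete} computation directly with $g_n$ in place of $\nabla f(x_n')$ and appeals to Lemma~\ref{lemma stochastic identities} — whereas you propose splitting $x_{n+1}=\tilde x_{n+1}-\eta\xi_n$, $v_{n+1}=\tilde v_{n+1}-\rho\sqrt\eta\,\xi_n$ and perturbing the deterministic contraction; these are the same calculation in a different order, and the "obstacle" you flag about the $\tfrac{L\eta^2}{2}\|\xi_n\|^2$ term eroding the contraction is not actually an issue, since that term is pure noise that lands in the additive budget and never touches the coefficient of $\L_n$.
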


Thus Nesterov's method reduces $f(x_n)$ below a `noise level' proportional to $\sigma_a^2\sqrt{\eta/\mu}$ at a linear rate $(1-\sqrt{\mu\eta})^n$ in the presence of additive noise. 
The analogous bound for stochastic gradient descent was obtained (in the more general setting of PL functions) in \citep[Theorem 4]{karimi2016linear} as
\[
\mathbb{E}[f(x_n) - \inf f] \leq (1-\mu\eta)^k \,\E\big[f(x_0) - \inf f\big] + \frac{L\sigma_a^2\eta}{2\mu}.
\]
With the largest admissible learning rate $\eta =1/L$, the noise level for GD is $\sigma_a^2/2\mu$ compared to the usually much lower value $\sigma_a^2/ \sqrt{L\mu}$ for Nesterov's method. Keeping a memory of previous gradient estimates facilitates `averaging out' the random noise.

With a fixed positive learning rate $\eta$, generally $f(x_n)\not\to 0$ unless $\sigma_a = 0$. We therefore consider a sequence of decreasing step sizes.

\begin{restatable}{theorem}{decreasingstep}[Additive noise and decreasing step size]\label{theorem decreasing learning rate}
    Assume that $f, g$ are as in Theorem \ref{theorem stochastic additive} and that the sequences $x_n, x_n', \rho_n$ are generated by the scheme
    \[
    x_n' = x_n + \sqrt{\eta_{n-1}} v_n, \qquad x_{n+1} = x_n' - \eta_n g_n, \qquad v_{n+1} = \rho_n(v_n - \sqrt{\eta_n} g_n)
    \]
    for parameters $\eta_n = \frac{\mu}{(n+\sqrt{L\mu}+1)^2}$, $\rho_n = \frac{1-\sqrt{\mu \eta_n}}{1+\sqrt{\mu\eta_n}}$.
    If $\eps\leq \sqrt{\mu/\eta_0} =\mu+\sqrt{L\mu}$, then
    \[
    \E\big[f(x_n) - \inf f\big] 
    \leq \frac{\sqrt{\frac L\mu}\,\E\left[f(x_0) - \inf f + \frac12\|x_0-\pi(x_0)\|^2\right] + \frac{\sigma_a^2}{\mu}\,\log\big(1+n\sqrt{\mu/L}\big)}{n+\sqrt{L/\mu}}.
    \]
\end{restatable}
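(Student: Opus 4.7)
The plan is to carry the Lyapunov analysis behind Theorem \ref{theorem stochastic additive} into the regime of decreasing step sizes by introducing a weighted telescoping sum. Let $\L_n$ denote the discrete Lyapunov function that, for constant $\eta$, yields the bound of Theorem \ref{theorem stochastic additive}; in its simplest form,
\[
\L_n = f(x_n) - \inf f + \tfrac12\bigl\|v_n + \sqrt{\mu}\bigl(x_n-\pi(x_n)\bigr)\bigr\|^2,
\]
which dominates $f(x_n)-\inf f$ and satisfies $\L_0 = f(x_0)-\inf f + \tfrac{\mu}{2}\|x_0-\pi(x_0)\|^2$ because $v_0=0$.

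The argument proceeds in three steps. First, the per-step inequality
\[
\E\bigl[\L_{n+1}\,\big|\,\mathcal{F}_n\bigr]\le (1-\sqrt{\mu\eta_n})\,\L_n + C\sigma_a^2\,\eta_n
\]
is obtained by repeating the proof of Theorem \ref{theorem stochastic additive} with $\eta$ replaced by $\eta_n$; the input assumptions \eqref{eq assumption convexity wrt minimizer} and \eqref{eq assumption mild non-convexity} (the latter usable at every step because $\eps\le\sqrt{\mu/\eta_0}\le\sqrt{\mu/\eta_n}$), Lemma \ref{lemma auxiliary parallel movement}, and the variance bound $\sigma_m=0$ for the stochastic gradient are the same as there. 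Second, define weights $A_n$ by $A_0=1$ and $A_{n+1}(1-\sqrt{\mu\eta_n})=A_n$; the parameter choice is calibrated so that $\sqrt{\mu\eta_n}=1/(n+\sqrt{L/\mu}+1)$, whence $A_n = (n+\sqrt{L/\mu})/\sqrt{L/\mu}$. Multiplying, taking expectation and telescoping yields
\[
A_n\,\E[\L_n]\le A_0\,\L_0 + C\sigma_a^2\sum_{k=0}^{n-1} A_{k+1}\,\eta_k.
\]
Third, since $A_{k+1}\eta_k$ is proportional to $1/\bigl(\mu(k+\sqrt{L/\mu}+1)\bigr)$, the noise sum is bounded by a constant multiple of $(1/\mu)\log(1+n\sqrt{\mu/L})$, and dividing by $A_n\asymp n+\sqrt{L/\mu}$ gives the stated bound in view of $\E[f(x_n)-\inf f]\le \E[\L_n]$.

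The main obstacle is the one-step inequality when $\eta_n$ changes between steps. The Nesterov predictor $x_n'=x_n+\sqrt{\eta_{n-1}}\,v_n$ uses the \emph{previous} step size, whereas the contraction factor relating $\L_{n+1}$ to $\L_n$ should involve $\sqrt{\mu\eta_n}$; this mismatch produces cross-terms of order $\sqrt{\eta_{n-1}}-\sqrt{\eta_n}$ when one attempts to close the recursion. These residuals are either non-positive or absorbable into the $\sigma_a^2\eta_n$ budget because $\eta_n$ is monotonically decreasing, and the extra displacement of $\pi(x_n)$ they induce is controlled by Lemma \ref{lemma auxiliary parallel movement}. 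Once this bookkeeping is complete, the remainder is a routine discrete Gronwall/telescoping estimate in the spirit of the classical SGD-with-diminishing-stepsize argument.
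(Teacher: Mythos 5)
Your high-level strategy, namely obtaining a one-step recursion $\E[\L_{n+1}]\le(1-\sqrt{\mu\eta_n})\,\E[\L_n]+C\sigma_a^2\eta_n$ and then telescoping with weights proportional to $n+\sqrt{L/\mu}$, is exactly the paper's. Your bookkeeping in the telescope (choice of $A_n$, bounding $\sum A_{k+1}\eta_k$ by a logarithm, dividing through) is correct. The gap is in how you obtain the per-step inequality; here the diagnosis of the difficulty is off, and the step you hand-wave is the one that actually requires work.

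Two specific issues. First, the Lyapunov function you call the ``simplest form,'' $f(x_n)-\inf f + \tfrac12\|v_n+\sqrt\mu(x_n-\pi(x_n))\|^2$, is not the one used in Theorems \ref{theorem discrete} and \ref{theorem stochastic additive}, and it cannot be: the discrete proof decomposes $v_n$ into normal and tangential parts and places a strictly-larger weight $\lambda=\frac{(1+\sqrt{\mu\eta})^2}{1-\sqrt{\mu\eta}}>1$ on the tangential term $\|\Pi v_n\|^2$. That weight is necessary even at fixed $\eta$; without it the coefficient of $\|\Pi v_n\|^2$ in the one-step computation does not contract by the factor $(1-\sqrt{\mu\eta})$ (it equals $\tfrac12(1-\sqrt{\mu\eta})(1+\eta\eps)$, which exceeds $\tfrac12(1-\sqrt{\mu\eta})$ whenever $\eps>0$). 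Since the weight $\lambda$ now depends on $n$ through $\eta_n$, the recursion you want relates two differently-weighted quadratic forms; closing it is precisely where the new argument lies. The paper's key observation is that $\lambda_{n+1}=\frac{(1+\sqrt{\mu\eta_n})^2}{1-\sqrt{\mu\eta_n}}$ is \emph{monotone decreasing} when $\eta_n$ is, so one can first establish the recursion with $\lambda_{n+1}$ on both sides and then upgrade to $\lambda_n\ge\lambda_{n+1}$ for free. Your sentence ``these residuals are either non-positive or absorbable into the $\sigma_a^2\eta_n$ budget'' asserts the conclusion but does not identify this mechanism, and the residuals are not $O(\sqrt{\eta_{n-1}}-\sqrt{\eta_n})$ artefacts of the predictor; they are the change in the $\lambda$-weight itself.

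Second, you attribute the subtlety to the appearance of $\eta_{n-1}$ in $x_n'=x_n+\sqrt{\eta_{n-1}}\,v_n$, but this is a red herring: the recursion actually used in the proof is $x_{n+1}'=x_n'-\eta_n g_n+\sqrt{\eta_n}\,v_{n+1}$ (and the discrete Lyapunov is phrased in terms of $x_n'$, not $x_n$), which involves $\eta_n$ only. Finally, Lemma \ref{lemma auxiliary parallel movement} plays no role in the discrete-time proofs; the discrete setting replaces it by the assumption that $\pi(x)=\Pi x+x^*$ is an affine orthogonal projection, and all one needs is $\langle \Pi u,\Pi^\bot w\rangle=0$. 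Invoking the lemma here suggests a misreading of which tools carry from the continuous argument to the discrete one.
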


Note that the `physical' step size $\sqrt{\eta_n}$ decays as $1/n$ and thus satisfies the (non-)summability conditions of \citet{robbins1951stochastic}. We can allow for larger $\eps$ by choosing $\eta_n = \mu/(n+n_0)$ with $n_0 > \sqrt{L/\mu}-1$ for a smaller initial step size.

If $\sigma_m>0$, \citet{liu2018mass} and \citet{gupta2023achieving} show that Nesterov's scheme no longer achieves acceleration. For general noise, we therefore consider a modified Nesterov scheme:
\begin{equation}\label{eq agnes}
x_n' = x_n + \sqrt{\alpha}v_n, \qquad x_{n+1} = x'_n - \eta g_n, \qquad v_{n+1} = \rho_n \big(v_n - \sqrt{\alpha}g_n)
\end{equation}
where again $g_n = g(x_n', \omega_n)$. The scheme \eqref{eq agnes} was introduced as the Accelerated Gradient method with Noisy EStimators method (AGNES) by \cite{gupta2023achieving} in convex and strongly convex optimization with purely multiplicative noise. Compared to Nesterov's algorithm, AGNES has an additional parameter $\alpha$ which is required to adapt to the multiplicative variance, at least if $\sigma_m\geq 1$. Here, we generalize the work of \cite{gupta2023achieving} by both allowing noise with general scaling for $\sigma_a, \sigma_m>0$ and relaxing the convexity assumption on $f$.

\begin{restatable}{theorem}{agnes}[Additive and multiplicative noise]\label{theorem agnes}
Assume that $f,  P, x^*$ are as in Theorem \ref{theorem discrete} and that $g$ is a family of gradient estimators such that \eqref{eq stochastic gradient} and \eqref{eq variance bound} hold for some $\sigma_a, \sigma_m\geq 0$.
Assume that the sequences $x_n, x_n', v_n$ are generated by the AGNES scheme \eqref{eq agnes} with parameters
\[
0<\eta \leq \frac1{L(1+\sigma_m^2)}, \qquad \rho =  \frac{1-\sqrt{\frac {\mu\eta}{1+\sigma_m^2}}}{1+\sqrt{\frac{\mu\eta}{1+\sigma_m^2}}}, \qquad\alpha =  \frac{1-\sqrt{\mu(1+\sigma_m^2)\eta}}{1-\sqrt{\mu(1+\sigma_m^2)\eta}+\sigma_m^2}\,\eta.
\]
Then, if $\eps <\sqrt{\mu(1+\sigma_m^2)/\eta}$, we have
\begin{align*}
\E\big[f(x_n) - \inf f\big] %
    &\leq\left(1-\sqrt{\frac{\mu\eta}{1+\sigma_m^2}}\right)^n \E\left[f(x_0)-\inf f + \frac\mu2 \norm{x_0-\pi(x_0)}^2 \right]
    + \frac{\sigma_a^2\sqrt\eta}{\sqrt{\mu(1+\sigma_m^2)}}.
\end{align*}
\end{restatable}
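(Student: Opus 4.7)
The plan is to adapt the Lyapunov/modified-energy strategy from Theorem \ref{theorem continuous strongly convex} and Theorem \ref{theorem discrete} to the AGNES scheme under the two-parameter noise model. Let $\mu_\sigma := \mu/(1+\sigma_m^2)$, which is the effective convexity constant underlying the advertised contraction rate $1-\sqrt{\mu_\sigma \eta}$. I would define a discrete-time energy of the form
\[
\L_n \;=\; f(x_n) - \inf f \;+\; \tfrac{1}{2}\bigl\| v_n + \sqrt{\mu_\sigma}\,(x_n - \pi(x_n))\bigr\|^2,
\]
and attempt to prove a one-step inequality
\[
\E\!\left[\L_{n+1}\mid \mathcal F_n\right] \;\leq\; \bigl(1-\sqrt{\mu_\sigma\eta}\bigr)\L_n \;+\; C\,\sigma_a^2\,\eta
\]
for a constant $C$ determined by the parameter choices. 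Iterating this recursion, bounding the resulting geometric sum by $1/\sqrt{\mu_\sigma\eta}$, and using $f(x_n)-\inf f \leq \L_n$ together with the initialization $v_0=0$ and $\mu_\sigma\leq\mu$ would yield the desired bound.

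To establish the one-step inequality, I would expand $\L_{n+1}$ using the AGNES updates $x_{n+1}=x_n+\sqrt{\alpha}v_n-\eta g_n$ and $v_{n+1}=\rho(v_n-\sqrt{\alpha}g_n)$, and exploit the affine structure of $\pi$: since $\pi(x)=\Pi x+x^*$ with $\Pi$ an orthogonal projection onto $V$ and $x^*\in V^\perp$, we have $x_{n+1}-\pi(x_{n+1})=(x_n-\pi(x_n))+\sqrt{\alpha}(I-\Pi)v_n-\eta(I-\Pi)g_n$. This affine-linearity plays the role of Lemma \ref{lemma auxiliary parallel movement}, making the tangential component of $v_n$ invisible to the potential part of $\L_n$. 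I would then combine four ingredients: $L$-smoothness applied to the gradient step $x_{n+1}=x_n'-\eta g_n$ to control $f(x_{n+1})-f(x_n')$ by the usual gradient-descent decrement plus an $L\eta^2\|g_n\|^2$ term; the mild-nonconvexity bound \eqref{eq assumption mild non-convexity} to compare $f(x_n')$ to $f(x_n)$ along the momentum step $x_n'=x_n+\sqrt{\alpha}v_n$, incurring an error $\tfrac{\eps}{2}\alpha\|v_n\|^2$; the first-order strong convexity \eqref{eq assumption convexity wrt minimizer} at the look-ahead point $x_n'$ to convert the inner product $\langle \nabla f(x_n'), x_n'-\pi(x_n')\rangle$ into a lower bound controlling both $f(x_n')-\inf f$ and $\tfrac{\mu}{2}\|x_n'-\pi(x_n')\|^2$; and the variance bound \eqref{eq variance bound}, which after taking conditional expectation gives $\E\|g_n\|^2\leq (1+\sigma_m^2)\|\nabla f(x_n')\|^2 +\sigma_a^2$ and $\E g_n=\nabla f(x_n')$.

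The main obstacle is tuning $\alpha$ and $\rho$ so that the $\sigma_m^2\|\nabla f(x_n')\|^2$ terms produced by the variance bound are exactly absorbed by negative gradient-magnitude contributions coming from the kinetic-energy expansion together with the $L$-smoothness decrement, leaving only the additive $\sigma_a^2$ contribution and a clean contraction factor $1-\sqrt{\mu_\sigma\eta}$. The specific formula for $\alpha$, namely $\alpha=\eta\,(1-\sqrt{\mu(1+\sigma_m^2)\eta})/(1-\sqrt{\mu(1+\sigma_m^2)\eta}+\sigma_m^2)$, which collapses to $\alpha=\eta$ when $\sigma_m=0$, is calibrated precisely so that the cross term $-2\eta\sqrt{\alpha}\,\langle v_n,\nabla f(x_n')\rangle$ appearing in the expansion of $\|v_{n+1}+\sqrt{\mu_\sigma}(x_{n+1}-\pi(x_{n+1}))\|^2$ matches, after being inflated by the factor $(1+\sigma_m^2)$ coming from the variance, the analogous cross term inherited from $\|v_n+\sqrt{\mu_\sigma}(x_n-\pi(x_n))\|^2$ at level $n$. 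The restriction $\eps<\sqrt{\mu(1+\sigma_m^2)/\eta}$ ensures that the residual mild-nonconvexity error $\tfrac{\eps}{2}\alpha\|v_n\|^2$ is dominated by the quadratic-in-$v_n$ terms already controlled by $\L_n$, so it can be safely absorbed; the condition $\eta\leq 1/(L(1+\sigma_m^2))$ plays the analogous role for the $L\eta^2\E\|g_n\|^2$ discretization error.

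Once the one-step inequality is in place, the proof concludes by taking total expectation, iterating, and estimating $\sum_{k=0}^{n-1}(1-\sqrt{\mu_\sigma\eta})^{n-1-k}\leq 1/\sqrt{\mu_\sigma\eta}$, which turns the additive noise contribution into $C\sigma_a^2\eta/\sqrt{\mu_\sigma\eta}=C\sigma_a^2\sqrt{\eta/\mu_\sigma}$, matching the stated noise floor $\sigma_a^2\sqrt{\eta}/\sqrt{\mu(1+\sigma_m^2)}$ up to the constant $C$ absorbed by a tighter accounting of the cross terms.
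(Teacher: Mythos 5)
Your high-level plan (Lyapunov sequence, one-step contraction, geometric sum) matches the paper's strategy, but the Lyapunov function you propose is not the one used in the paper, and the difference is not cosmetic -- I do not believe the one-step contraction goes through with your choice.

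The paper's Lyapunov sequence separates the tangential and normal velocity components with \emph{different} weights:
\[
\L_n = \E\big[f(x_n)-\inf f\big]+\frac12\,\E\!\left[\big\|\,b\,\Pi^\bot v_n+\sqrt\mu\,(x_n'-\pi(x_n'))\,\big\|^2\right]+\frac\lambda2\,\E\!\left[\|\Pi v_n\|^2\right],
\]
with $b=\sqrt{(1+\sigma_m^2)\alpha/\eta}$ and $\lambda=\frac{(b+\sqrt{\mu\alpha})^2}{b-\sqrt{\mu\alpha}}\cdot\frac{\gamma}{\sqrt\alpha}$, $\gamma=\sqrt\mu(\eta-\alpha)+b\sqrt\alpha$. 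Your proposed form $f(x_n)-\inf f+\tfrac12\|v_n+\sqrt{\mu_\sigma}(x_n-\pi(x_n))\|^2$ is, after Pythagoras, the special case $b=1$, $\lambda=1$, and with $x_n$ in place of $x_n'$. Here is the concrete problem: the tangential kinetic energy obeys $\|\Pi v_{n+1}\|^2=\rho^2\big(\|\Pi v_n\|^2-2\sqrt\alpha\langle g_n,\Pi v_n\rangle+\alpha\|\Pi g_n\|^2\big)$, so with your weight $\lambda=1$ the cross term $\langle\nabla f(x_n'),\Pi v_n\rangle$ appears with coefficient $-\rho^2\sqrt\alpha$. The normal part of the Lyapunov contributes the coefficient $-(b-\sqrt{\mu\alpha})\gamma$ for $\langle\nabla f(x_n'),\Pi^\bot v_n\rangle$, which is not equal to $-\rho^2\sqrt\alpha$ unless $\lambda$ is chosen as in the paper. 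But the mild-nonconvexity assumption \eqref{eq assumption mild non-convexity} is a one-sided bound on the \emph{sum} $\langle\nabla f(x_n'),v_n\rangle$ and gives no control over $\langle\nabla f(x_n'),\Pi v_n\rangle$ and $\langle\nabla f(x_n'),\Pi^\bot v_n\rangle$ separately. Unless these two coefficients agree exactly, you cannot combine them into $\langle\nabla f(x_n'),v_n\rangle$ and apply \eqref{eq assumption mild non-convexity}; the leftover inner product has a sign that cannot be bounded by the assumptions. The whole point of the weight $\lambda$ in the paper's Lyapunov is to make $\lambda\rho^2\sqrt\alpha$ equal the normal coefficient so this cancellation is exact. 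Note this issue already arises in the deterministic Theorem \ref{theorem discrete}: the paper there also needs $\lambda=\frac{(1+\sqrt{\mu\eta})^2}{1-\sqrt{\mu\eta}}\neq 1$, so a single unweighted kinetic term fails even before noise enters.

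Two secondary issues: using $x_n$ rather than the look-ahead point $x_n'$ in the position term leads to coefficients like $\rho+\sqrt{\mu\alpha}=\frac{1+\mu\alpha}{1+\sqrt{\mu\alpha}}$ that do not collapse under the substitution $\rho=\frac{b-\sqrt{\mu\alpha}}{b+\sqrt{\mu\alpha}}$, whereas using $x_n'$ yields the clean factor $b-\sqrt{\mu\alpha}$. And your coefficient $\sqrt{\mu_\sigma}=\sqrt{\mu/(1+\sigma_m^2)}$ in front of the position is not the paper's effective coefficient $\sqrt\mu/b$; these coincide only when $\sigma_m=0$. Your discussion of how $\alpha$ is calibrated to absorb the multiplicative variance, the role of $L$-smoothness, and the final geometric summation are all correct in spirit, but the Lyapunov function itself needs to be repaired along the lines above before the one-step contraction can be pushed through.
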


The proof of Theorem \ref{theorem agnes} is given in Appendix \ref{appendix agnes}. While at a glance it appears that the multiplicative noise is helping us reduce the additive error term, this is merely a consequence of the small learning rate which is forced upon us. The condition on the negative eigenvalues is relaxed to $\eps \leq \sqrt{\mu L}(1+\sigma_m^2)$ with the largest admissible step size $\eta = 1/(L(1+\sigma_m^2))$ as the issues stemming from tangential drift pale in comparison to those stemming from stochastic gradient estimates. For comparison, if we chose the same $\eta$ in Theorem \ref{theorem discrete}, we could only allow for $\eps \leq \sqrt{\mu L}\,\sqrt{1+\sigma_m^2} < \sqrt{\mu L}(1+\sigma_m^2)$, but we would obtain a rate of convergence of $1- \sqrt{\mu/{L(1+\sigma_m^2)}}$. In the stochastic case, we only achieve $1-\sqrt{\mu/L}/(1+\sigma_m^2)$. Thus the limiting factor is the stochastic noise, not the geometry of $f$.

\begin{remark}\label{remark global projections}
    We opted for a {\em linear} closest point projection map to facilitate proofs. In the non-linear case, closest point projections cannot be defined globally: \citet{jessen1940saetninger,busemann1947note, phelps1957convex} show that if $K$ is a subset of $\R^d$ such that for every $x\in\R^d$ there exists a unique closest point in $K$, then $K$ is closed and convex. If $K$ is both a $k$-dimensional submanifold of $\R^d$ and a convex set, then $K$ is an affine $k$-dimensional subspace of $\R^d$, i.e.\ our assumptions are the most general when assuming that a unique closest point projection onto a submanifold is defined globally.
    
    Thus, if $\mathcal M$ is not an affine space, we can only assume that $\pi$ is `good' in a neighborhood of $\mathcal M$. In stochastic optimization, where we can randomly `jump' out of the good neighborhood, this leads to serious technical challenges. Guarantees on `remaining local' with high probability have recently been derived for SGD with additive noise and decaying learning rates by \citet{mertikopoulos2020almost} and with multiplicative noise by \citet{wojtowytsch2023stochastic}. To avoid obscuring the new geometric constructions, we opted to forgo this highly technical setting here and prioritize the extension of Theorems \ref{theorem continuous strongly convex} and \ref{theorem discrete} towards stochastic optimization.
\end{remark}

\section{Conclusion}\label{section conclusion}

We have proved that first order momentum-based methods accelerate convergence in a more general setting than convex optimization with many geometric features motivated by loss landscapes encountered in deep learning. The models we studied include the heavy ball ODE and deterministic and stochastic optimization schemes in discrete time under various noise assumptions.

The most cumbersome limitation of our convergence guarantees is the assumption that the closest point projection onto the set of minimizers is affine linear in the discrete time setting, i.e.\ the derivative $D\pi$ is constant in space. Future work will focus on relaxing this assumption.

\newpage

\section*{Acknowledgements}
The authors gratefully acknowledge the financial support of the NSF, grant DMS 2424801. The authors are grateful to Quentin Merigot, who pointed out a simpler proof of Lemma \ref{lemma auxiliary parallel movement} which requires less regularity than the authors' original approach.

\bibliographystyle{iclr2025_conference}
\bibliography{bibliography.bib}

\newpage
\appendix

\section{Proofs of Lemmas \ref{lemma application} and \ref{lemma auxiliary parallel movement}: Geometry of the energy landscape}\label{appendix differential geometry}

We assume that the reader is familiar with basic concepts of differential geometry, such as submanifolds of Euclidean spaces and tangent spaces and with concepts of multi-variate analysis such as compactness and the inverse function theorem.

We first recall an important observation: Assume that $\mathcal M$ is a $C^1$-manifold. Fix a point $x\in\R^d$. Assume that the function $d:\mathcal M\to \R$ given by $d(z) = \|x-z\|$ has a local extremum at $z\in\mathcal M$. Then for every $C^1$-curve $\gamma:(-\eps, \eps)\to \mathcal M$ such that $\gamma(0) = z$, we have
\[
0 = \frac{d}{dt}\bigg|_{t=0} d(\gamma(t))^2 = 2\langle x- \gamma(0), \,\dot \gamma(0)\rangle = 2\langle x-z, \dot \gamma(0)\rangle
\]
or in other words: the connecting line $x-z$ is orthogonal to the tangent space $T_z\mathcal M$. This is in particular true if $z$ is the closest point in $\mathcal M$ to $x$.

\parallelmove*

\begin{proof}
    The closest point projection onto a $C^2$-manifold is $C^1$-smooth and satisfies 
    \begin{equation}\label{eq formula for pi}
    \pi(x) = x - d(x)\,\nabla d(x)
    \end{equation}
    where $d(x) = \dist(x, \mathcal M)$ is the distance function to the manifold, i.e.\ $\nabla d(x)$ gives the unit vector pointing directly towards the manifold. For details, see the proof of Lemma \ref{lemma application}.

    We can rewrite \eqref{eq formula for pi} as
    \begin{align*}
    \pi(x) &= \nabla \left(\frac{\|x\|^2}2 - \frac{d(x)^2}2\right) 
    = \nabla\left(\frac{\|x\|^2}2 - \frac{\min_{x\in\mathcal M}\|x-z\|^2}2\right)\\
    &= \nabla_x \max_{\mathcal M}\left(\frac{\|x\|^2}2 - \frac{\|x\|^2 - 2\langle x, z\rangle + \|z\|^2}2\right) = \nabla_x \max_{z\in\mathcal M}\left(\langle x, z\rangle - \frac{\|z\|^2}2\right).
    \end{align*}
    We are taking the (pointwise in $x$) maximum over a class of functions which are linear in $x$, i.e.\ we find that
    \[
    \xi(x) := \max_{z\in\mathcal M}\left(\langle x, z\rangle - \frac{\|z\|^2}2\right)
    \]
    is convex. In particular, the derivative matrix $D\pi = D^2\xi$ is symmetric and positive semi-definite and thus
    \[
    \left\langle \dot x, \frac{d}{dt}\pi\circ x\right\rangle =  \left\langle \dot x, \:D\pi(x)\, \dot x\right\rangle \geq 0.\qedhere
    \]
\end{proof}

\application*
\begin{proof}%
This result uses standard ideas from differential geometry. For the reader's convenience, we provide a full proof sketch. 

{\bf Step 1. Closest point projection.} Assume that $\mathcal M$ is a $C^m$-manifold for $m\geq 2$. Fix a point $z_0\in \mathcal M$, a radius $r>0$ and the neighbourhood $U= B_r(z_0)$. We assume that $r>0$ is so small that there exists a $C^m$-diffeomorphism $\phi:U\to V\subseteq \R^d$ such that $\phi(U\cap \mathcal M) = V\cap \{y : y_{k+1} = \dots = y_d = 0\}$. If $\mathcal M$ is a $C^m$-manifold, we can find a collection of $C^{m-1}$-smooth vector fields $A_{1}, \dots, A_d$ such that
\begin{enumerate}
\item $A_1(z), \dots, A_d(z)$ span the tangent space $T_z\mathcal M$ for all $z\in\mathcal M\cap U$ and
\item $\langle A_i(z), A_j(z)\rangle = \delta_{ij}$ for all $i,j=1,\dots, d$ and $z\in \mathcal M\cap U$.
\end{enumerate}
Such vector fields can be obtained for instance by applying the Gram-Schmidt algorithm to the columns of the derivative matrix $D(\phi^{-1})_{\phi(z)} = (D\phi_z)^{-1}$ of the inverse diffeomorphism. The algorithm returns an orthonormal basis since $\phi$ is a diffeomorphism, i.e.\ $D\phi$ has full rank. The first $k$ columns span the tangent space of $T_x\mathcal M$ since motion tangential to $\mathcal M$ in $U$ corresponds to motion where the last $d-k$ coordinates are kept zero in $V$, i.e.\ to the first $k$ columns of $D\phi^{-1}$.

We now introduce new coordinates: Denote by $\hat V\subseteq \R^k$ the set such that $\hat V \times \{0_{d-k}\} = V\cap \{y : y_{k+1} = \dots = y_d =0\}$  and
\[
\Psi : \hat V \times \R^{d-k} \to \R^d, \qquad \Psi(\hat y, s_{k+1}, \dots, s_d) = \phi^{-1} (\hat y, 0) + \sum_{i=k+1}^d s_i \,A_i\big(\phi^{-1}(\hat y, 0)\big).
\]
The map $\Psi$ is $C^{m-1}$-smooth since it is linear in $s$ and the least regular components, the vector fields $A_i$, are $C^{m-1}$-smooth in $y$. If $m\geq 2$, we trivially find that $\Psi$ is differentiable and $D\Psi_{(\hat y,0)} = (\partial_{\hat y_1}\phi, \dots, \partial_{\hat y_k}\phi, A_{k+1}, \dots, A_d)$ is invertible. Hence, the map $\Psi$ is a local diffeomorphism by the inverse function theorem. Notably, we see that 
\[
x- \Psi(\hat y, 0) \bot T_{\Psi(\hat y, 0)}\mathcal M \quad\LRa\quad x- \Psi(\hat y, 0) \in \mathrm{span} \left\{A_{k+1}(\Psi(\hat y, 0)), \dots, A_{d}(\Psi(\hat y, 0))\right\}
\]
and thus if and only if 
\[
x = \Psi(\hat y,0) + \sum_{i=k+1}^d s_iA_i \big(\Psi(\hat y, 0)\big) = \Psi(\hat y, s)
\]
for some $s\in \R^{d-k}$. In a neighbourhood of $z_0$ where $\Psi$ is a diffeomorphism, we set $\pi(\Psi(\hat y, s)) = \Psi(\hat y, 0)$, i.e.\ 
\[
\pi = \Psi \circ  P_{\R^k}\circ \Psi^{-1}, \qquad  P(y_1, \dots, y_d) = (y_1, \dots, y_k, 0, \dots, 0).
\]
The map is as smooth as $\Psi$, i.e.\ $C^{m-1}$-smooth (assuming that $m\geq 2$). The map $\pi$ defined in this way may not be the unique closest point projection on all of $U$ (e.g.\ when a point $z'\in\mathcal M\setminus U$ is closer), but it is guaranteed to be the unique closest point projection on a smaller subset $B_{r/2}(z^*)$ where the closest point on $\mathcal M$ is closer than the boundary of $U$.

Thus, for every point $z\in \mathcal M$, there exists a neighborhood $B_{r(z)}(z)$ for $r(z)>0$ in which a unique closest point projection is defined. Setting $U:= \bigcup_{z\in \mathcal M} B_{r(z)}(z)$, we find a neighborhood of the manifold $\mathcal M$ inside of which the closest point projection is defined. 

Assume that the radius $r(z)$ is chosen as the supremum of all admissible radii. Then the function $r(z)$ is strictly positive and Lipschitz-continuous with Lipschitz-constant $1$: $r(z') \geq r(z) - \|z-z'\|$ since $B_{r - \|z-z'\|}(z') \subseteq B_{r} (z)$. Exchanging the role of $z, z'$ shows that
\[
r(z') \geq r(z) - \|z-z'\|, \quad r(z) \geq r(z') - \|z-z'\|\qquad \Ra\quad |r(z) - r(z')| \leq \|z-z'\| \quad\forall\ z, z'\in \mathcal M.
\]
In particular, if $\mathcal M$ is compact, then as a continuous positive function, $r$ is uniformly positive and there exists a neighborhood $W_\delta := \{x\in \R^d : \dist(x, \mathcal M) < \delta\}$ on which the unique closest point projection is defined. Additionally, we find that
\[
\big\|\Psi(\hat y, s) - \pi\circ\Psi(\hat y, s)\big\|= \left\|\sum_{i=k+1}^ds_i A_i(\Psi(\hat y,0))\right\| = \|s\|
\]
since the vector fields $A_i$ are orthonormal. 

In Lemma \ref{lemma auxiliary parallel movement}, we require $\pi(x(t))$ to be $C^2$-smooth if $x(t)$ is $C^2$-smooth due to the technicalities of the proof. For this reason, we make the assumption that $\mathcal M$ is a $C^3$-manifold to ensure that $\pi$ is a $C^2$-map. For the required smoothness of solutions to the heavy ball ODE, it is sufficient to require that $f$ is $C^2$-smooth.

{\bf Step 2. The geometry of $f$.} Since $\mathcal M$ is the set of minimizers of $f$, the Hessian $D^2f(z)$ is positive semi-definite for every $z\in\mathcal M$. As $f$ is constant on $\mathcal M$, we for every curve $\gamma:(-\eps,\eps)\to \mathcal M$ we have
\[
 0 = \frac{d^2}{dt^2} f(\gamma(t)) = \nabla f(\gamma(t)), \,\gamma''(t)\rangle + \gamma'(t)^T\,D^2f(\gamma(t))\,\gamma'(t) = \gamma'(t)^T\,D^2f(\gamma(t))\,\gamma'(t)
\] 
because $\nabla f\equiv 0$ on the set $\mathcal M$ of minimizers of $f$, i.e.\ $v^TD^2f(z)v =0$ for all $z\in\mathcal M$ and $v\in T_z\mathcal M$. If we assume that $D^2f(z)$ has rank $d-k$ for all $z\in\mathcal M$, then necessarily $v^TD^2f(z)v >0$ for all $v$ which are orthogonal to $\mathcal M$ or equivalently
\[
v^TD^2f(z)v \geq \lambda(z)\big\| P_z^\bot v\big\|^2 \qquad\forall\ z\in \mathcal M, \:v\in\R^d
\]
where $ P^\bot_z$ denotes the orthogonal projection onto the orthogonal complement of the tangent space of $\mathcal M$ at $z$, i.e.\ 
\[
 P_z^\bot v = \sum_{i=k+1}^d \langle v, A_i(z)\rangle\,A_i(z).
\]
If $\mathcal M$ is compact, then the function $\lambda$ is bounded from below by some $\lambda_0>0$. Let $\eps = \lambda_0/2$. Using the uniform continuity of $\pi,  P$ and $D^2f$ on a compact set $\overline{W_\delta}$ and choosing $\delta>0$ suitably small, we find that
\[
v^TD^2f(x)v \geq \frac{\lambda_0}2\big\| P^\bot_{\pi(x)} v\|^2 -\eps \|v\|^2\geq \qquad \forall\ x\in \overline{W_\delta}, \:v\in S^{d-1}
\]
since the map from matrix to smallest eigenvalue is continuous on the space of symmetric matrices. In particular, for any fixed $(\hat y, s)\in \hat V\times S^{d-1}$ we see that the function 
\[
g:(-\delta, \delta)\to\R, \qquad g(t) =  f\big(\Psi(\hat y, ts)\big) 
\]
is $\lambda_0-\eps$ strongly convex. To see this, abbreviate $v:= \sum_{i=k+1}^d s^iA_i(\hat y)$ and compute 
\begin{align*}
g''(t) &= \frac{d^2}{dt^2} f\left(\Psi(\hat y,0) + tv\right) = v^TD^2f\left(\Psi(\hat y, ts)\right) v 
	\geq (\lambda_0-\eps)\left\|v\right\|^2 = \lambda_0-\eps
\end{align*}
since $v\in T_{\Psi(\hat y, 0)}\mathcal M^\bot$ and $\|v\| = \|s\|=1$. Hence
\begin{align*}
f\big(\Psi(\hat y, 0)\big) &= g(0) \geq g(t) - g'(t )t + \frac{\lambda_0-\eps}2 t^2\\
	&= f\big(\Psi(\hat y, ts)\big) - t\,\langle\nabla f(\Psi(\hat y, ts)), v\rangle + \frac{\lambda_0-\eps}2 \big\|tv\big\|^2\\
	&= f\big(\Psi(\hat y, ts)\big)  + \langle\nabla f(\Psi(\hat y, ts)), \Psi(\hat y, 0) - \Psi(\hat y, ts)\rangle + \frac{\lambda_0-\eps}2 \big\|\Psi(\hat y, 0) - \Psi(\hat y, ts)\big\|^2
\end{align*}
or in the original coordinates of $W_\delta\subseteq\R^d$:
\[
f(\pi(x)) \geq f(x) + \langle \nabla f(x), \pi(x) -x\rangle + \frac{\lambda_0-\eps}2\,\|x-\pi(x)\|^2.
\]
By the same argument with reversed roles for $x,\pi(x)$ we have
\[
f(x) \geq f(\pi(x)) + \frac {\lambda_0-\eps}2\|x-\pi(x)\|^2 = \inf f + \frac{\lambda_0-\eps}2\,\dist(x, \mathcal M)^2.
\]
In particular: $f(x) < \inf f + \alpha$ implies that
\[
\dist(x,\mathcal M) \leq \sqrt{\frac{2}{\lambda_0-\eps} \big(f(x) - \inf f\big)} < \sqrt{\frac{2\alpha}{\lambda_0-\eps}}.
\]
Choosing $\alpha$ small enough, we see that the open neighborhood 
\[
\mathcal U_\alpha := \{x : f(x) < \inf f+\alpha\}
\]
is a subset of $W_\delta$. Within this neighborhood, the unique closest point projection is therefore well-defined. This concludes the proof of the Lemma and shows that the Assumptions of Section \ref{section assumptions} are satisfied in this setting.
\end{proof}

\begin{remark}
Controlling the largest eigenvalue of the Hessian rather than the smallest, we see that there exist $0<\mu<L$ such that
\[
\frac\mu2\,\dist(x,\mathcal M)^2 \leq f(x) \leq \frac L2\,\dist(x, \mathcal M)^2
\]
in a neighborhood of $\mathcal U$. For this reason, we presented Example \ref{example distance squared} for context and intuition.
\end{remark}

\section{Proofs of Acceleration in Optimization}\label{appendix acceleration continuous}

\subsection{Proof of Theorems \ref{theorem continuous strongly convex} and \ref{theorem global convergence}: Optimization in continuous time}

We first prove the `local' convergence statement.

\continuoustime*
\begin{proof}%
{\bf Step 0: Existence and Uniqueness.} Note that a solution to the heavy ball ODE can be obtained as a solution to the ODE system
\[
\begin{pde}
\dot x &= v\\
\dot v &= -\gamma v - \nabla f(x).
\end{pde}
\]
If $\nabla f$ is locally Lipschitz-continuous (for instance if $f$ is $C^2$-smooth), a unique $C^1$-solution $(x,v)$ of the ODE system exists by the Picard-Lindel\"off Theorem. Since $\dot x = v$ is $C^1$-smooth, we see that the solution $x$ of the heavy ball ODE is $C^2$-smooth.

{\bf Step 1: $x_t$ remains in $\mathcal U_\alpha$.}
Note that
\[ 
\frac{d}{dt}\left(f(x_t) + \frac12\|\dot x_t\|^2\right) = \langle \nabla f(x_t), \dot x_t\rangle + \langle \dot x_t, \ddot x_t\rangle = \langle \ddot x + \nabla f(x),\dot x\rangle = -2\sqrt\mu\,\|\dot x_t\|^2 \leq 0,
\]
so
\[
f(x_t) \leq f(x_t) + \frac12\|\dot x_t\|^2 \leq f(x_0) + \frac12\|\dot x_0\|^2 = f(x_0) < \alpha
\]
for all $t \geq 0$ since $\dot x_0 = 0$, which implies $x_t \in \mathcal U_\alpha$ for all $t \geq 0$.

{\bf Step 2: Bounding $f(x_t)$.}
Let $z_t:= \pi(x_t)$ denote the closest point projection of $x_t$ onto $\mathcal{M}$ and by $\dot z_t$ its derivative. Consider the Lyapunov function
\begin{align*}
\L(t) &= f(x_t) - f(\pi(x_t)) + \frac{1}{2}\|\dot{x}_t + \sqrt{\mu}(x_t - \pi(x_t))\|^2
\end{align*}
We will show that $\L'(t) \leq -\sqrt{\mu}\L(t)$ under some neighborhood assumption on $x_t$.
Using the heavy ball dynamics and properties of the projection, we can bound $\L'(t)$ as:
\begin{align*}
\L'(t) &= \langle \nabla f(x_t), \dot{x}_t \rangle + \big\langle \dot{x}_t + \sqrt{\mu}(x_t - \pi(x_t)), \ddot{x}_t + \sqrt{\mu}(\dot{x}_t - \dot z_t)\big\rangle \\
    &= \big\langle\dot x_t,\: \nabla f(x_t) + \ddot x_t + \sqrt\mu \,\dot x_t - \sqrt\mu\, \dot z_t\big\rangle + \sqrt{\mu}\,\big\langle x_t - \pi(x_t),\: \ddot x_t + \sqrt\mu \,\dot x_t - \sqrt\mu\dot z_t\big\rangle\\
    &= \big\langle \dot x_t, -\sqrt\mu\,\dot x_t - \sqrt{\mu}\dot z_t,\big\rangle + \sqrt\mu \,\big\langle x_t-z_t,\:   -\sqrt\mu\, \dot x_t - \nabla f(x_t) - \sqrt{\mu}\,\dot z_t \big\rangle\\
    &\leq - \sqrt\mu \|\dot x_t\|^2- \mu\langle x_t-z_t, \dot x_t\rangle - \sqrt\mu\,\langle \nabla f(x_t), \:x_t-z_t\rangle
\end{align*}
where we used the heavy ball dynamics $\ddot{x}_t = -2\sqrt{\mu}\dot{x}_t - \nabla f(x_t)$ and the geometric properties of the closest point projection:
\begin{enumerate}
    \item $-\langle \dot x_t, \,\dot z_t\rangle \leq 0$ by Lemma \ref{lemma auxiliary parallel movement} and
    \item $\langle x_t-z_t, \,\dot z_t\rangle =0$ since $x_t-z_t$ meets $\M$ orthogonally at $z_t$ and $\dot z_t$ is tangent to $\M$ at $z_t$.
\end{enumerate}
Next, using the $\mu$-strong aiming condition, we have
\begin{equation*}
\langle \nabla f(x_t), x_t - \pi(x_t) \rangle \geq f(x_t) - f(\pi(x_t)) + \frac{\mu}{2}\|x_t - \pi(x_t)\|^2.
\end{equation*}
Substituting this into the bound on $\L'(t)$ and simplifying gives:
\begin{align*}
\L'(t) &\leq -\sqrt\mu \,\|\dot x_t\|^2 - \mu\,\langle x_t - \pi(x_t), \dot x_t\rangle - \sqrt{\mu}\left(f(x_t) - f(\pi(x_t)) + \frac{\mu}{2}\|x_t - \pi(x_t)\|^2\right)\\
    &= - \sqrt\mu \left(f(x_t) - f(\pi(x_t)) + \frac12 \left\|\dot x_t + \sqrt\mu\big(x_t-\pi(x_t)\big)\right\|^2 + \frac12\,\|\dot x_t\|^2\right)\\
    &\leq - \sqrt \mu\,\L(t).
\end{align*}

Now, we can bound the initial value of the Lyapunov function $\L(0)$ as follows:
\begin{align*}
\L(0) &= f(x_0) -  \inf_{z\in\R^d} f(z) + \frac{1}{2}\|\dot{x}_0 + \sqrt{\mu}(x_0 - \pi(x_0))\|^2 \\
&= f(x_0) -  \inf_{z\in\R^d} f(z) + \frac{\mu}{2}\text{dist}(x_0, \mathcal{M})^2
\end{align*}
since $\dot{x}_0 = 0$ and $\dist(x, \mathcal M) = \|x-\pi(x)\|$.
We deduce that
\[
\frac{d}{dt} e^{\sqrt\mu t}\L(t) = \big(\sqrt\mu \L(t) + \L'(t)\big) e^{\sqrt\mu t} \leq 0
\]
so
\begin{align*}
f(x_t) - \inf f &\leq \L(t) \leq e^{-\sqrt{\mu}t}\L(0) 
= e^{-\sqrt{\mu}t}\left(f(x_0) - \inf f + \frac{\mu}{2}\text{dist}(x_0, \mathcal{M})^2\right) \qedhere
\end{align*}
\end{proof}

Next, we prove the `global convergence' statement.
\globalconvergence*
\begin{proof}%
    The idea of the proof is to show that at some large time $T$, the trajectory of $x$ enters the set $\mathcal U_\alpha$ with sufficiently low velocity that it gets trapped in $\mathcal U_\alpha$. From that point onwards, the proof of Theorem \ref{theorem continuous strongly convex} applies with minor modifications.

    {\bf Step 1.} Denote by $E(t) = f(x_t) + \frac12\,\|\dot x_t\|^2$ the `total energy' of the curve $x$ at time $t$. As in the proof of Theorem \ref{theorem continuous strongly convex}, we find that $E'(t) = - 2\sqrt\mu\,\|\dot x_t\|^2$, so in particular $\|\dot x\|^2$ is square integrable in time:
    \[
    \int_0^\infty \|\dot x\|^2 \dt = \frac{E(0) - \lim_{t\to\infty} E(t)}{2\sqrt\mu} \leq \frac{f(x_0) - \inf f}{2\sqrt\mu}.
    \]
    Recall for future use that $f(x_t) \leq E(t) \leq E(0) = f(x_0)$ for $t\geq0$.

    {\bf Step 2.}
    In this step, we show that also $\nabla f(x_t)$ is square integrable in time. To do this, we first observe that 
    \begin{align*}
    \int_0^T \|\nabla f(x) + 2\sqrt\mu\,\dot x\|^2 \dt 
        &= \int_0^T \|\nabla f(x)\|^2+4\sqrt\mu\,\langle \nabla f(x), \dot x\rangle + \|\dot x\|^2\dt\\
        &= \int_0^T \|\nabla f(x)\|^2\dt +4\sqrt\mu\,\int_0^T \frac{d}{dt} f(x_t)\dt +4\mu\int_0^T \|\dot x\|^2\dt.
    \end{align*}
    On the other hand, we can write
    \begin{align*}
    \int_0^T \|\nabla f(x) + 2\sqrt\mu\,\dot x\|^2 \dt
        &= -\int_0^T \langle \nabla f(x) + 2\sqrt\mu\,\dot x, \:\ddot x\rangle \dt\\
        &= -\int_0^T \langle \nabla f(x), \ddot x\rangle \dt - \sqrt\mu \int_0^T \frac{d}{dt}\|\dot x\|^2\dt\\
        &= -\langle \nabla f(x_T), \dot x_T\rangle +\int_0^T \langle D^2f(x)\dot x, \dot x\rangle \dt - \sqrt \mu \,\|\dot x_T\|^2
    \end{align*}
    since $\dot x_0 = 0$. Overall, we find that
    \begin{align*}
    \int_0^T \|\nabla f(x)\|^2\dt
        &= 4\sqrt\mu\big(f(x_0) - f(x_T)\big) + \langle \nabla f(x_T), \dot x_T\rangle - \sqrt\mu\,\|\dot x_T\|^2 \\
        &\quad + \int_0^T \langle D^2f(x)\dot x, \dot x\rangle - 4\mu\,\|\dot x\|^2\dt\\
        &\leq 4\sqrt\mu\big(f(x_0)-\inf f\big) + L\int_0^T\|\dot x\|^2 \dt + \langle \nabla f(x), \dot x\rangle(T)
    \end{align*}
    where $L= L_{f(x_0)}$ is the Lipschitz-constant of $\nabla f$ on the set $\{x : f(x) <f(x_0)\}$. Note that
    \[
    t\mapsto \langle \nabla f(x), \dot x\rangle = \frac{d}{dt} f(x_t)
    \]
    is the derivative of the bounded function $f(x_t) \in [\inf f, f(x_0)]$ and continuous, so there exists a sequence of times $T_n\to\infty$ such that $\langle f(x), \dot x\rangle(T_n)\to0$. Since $\|\nabla f\|^2$ is a non-negative integrand, we can bound
    \begin{align*}
    \int_0^\infty \|\nabla f(x)\|^2\dt
        &\leq \lim_{n\to\infty} \left(4\sqrt\mu\big(f(x_0)-\inf f\big) + L\int_0^{T_n}\|\dot x\|^2 \dt + \langle \nabla f(x), \dot x\rangle(T_n)\right)\\
        &= 4\sqrt\mu\big(f(x_0)-\inf f\big) + L\int_0^{\infty}\|\dot x\|^2 \dt < +\infty.
    \end{align*}

    {\bf Step 3.} Using Steps 1 and 2, we find that 
    \[
    \int_0^T \|\nabla f(x_t)\|^2 + \|\dot x_t\|^2 \dt < +\infty.
    \]
    In particular, there exists a sequence of times $t_n\to \infty$ such that 
    \[
    \|\nabla f(x(t_n))\|^2 + \|\dot x(t_n)\|^2 \to 0
    \]
    as $n\to\infty$. We can therefore choose $T>0$ such that
    \[
    \|\nabla f(x_T)\|^2 + \|\dot x_T\|^2 < \min\big\{\delta, \alpha\}.
    \]
    Then we find that
    \[
    \|\nabla f(x_T)\|^2 < \delta \quad\Ra\quad f(x_T) < \alpha, \qquad \|\dot x_T\|^2 < \alpha \quad \Ra\quad  E(T) = f(x_T) + \frac12\,\|\dot x_T\|^2 < \alpha.
    \]
    In particular, we conclude that $f(x_t) \leq E(t) < \alpha$ for all $t> T$, i.e.\ $x_t \in \mathcal U_\alpha$ for all $t>T$. Thus, by the same argument as Theorem \ref{theorem continuous strongly convex}, we find that
    \[
    \mathcal L(t) := f(x_t) -\inf f + \frac12\big\|\dot x + \sqrt\mu\,\big(x_t - \pi(x_t)\big)\big\|^2
    \]
    satisfies $\mathcal L(t) \leq e^{-\sqrt\mu(t-T)} L(T)$ for $t>T$, so
    \begin{align*}
    f(x_t) - \inf f &\leq \mathcal L(t) \leq e^{\sqrt\mu(T-t)}\left(f(x_T) -\inf f + \frac12 \big\|\dot x_T + \sqrt\mu(x_T-\pi(x_T))\big\|^2\right)\\
    &\leq e^{\sqrt\mu(T-t)}\left(f(x_t) -\inf f + \frac22\big\{\|\dot x_T\|^2 + \|x_T-\pi(x_T)\|^2\big\}\right)\\
    &\leq e^{\sqrt\mu(T-t)}\left(\frac{3\alpha}2 + \dist(x_T, \mathcal M)^2\right).\qedhere
    \end{align*}
    \end{proof}

    \begin{remark}
        Obviously, the condition that $\nabla f$ is Lipschitz-continuous on all sublevel sets could easily be relaxed to requiring that the initialization $x_0$ is such that $\nabla f$ is merely Lipschitz-continuous on the set $\{x : f(x) < f(x_0)\}$, or even on every connected component of the set.
    \end{remark}

\subsection{Proof of Theorem \ref{theorem discrete}: Acceleration in Discrete Time (Deterministic setting)}\label{appendix acceleration discrete}

\smallnegeigen*
\begin{proof}%
The function $f(x) + \frac\eps2 \|x\|^2$ is convex since all eigenvalues of its Hessian $D^2f + \eps I$ are non-negative, so
\[
f(z) + \frac\eps2\,\|z\|^2 \geq f(x) + \frac\eps2 \,\|x\|^2 + \langle \nabla f(x) + \eps x, z-x\rangle 
\]
which is equivalent to
\begin{align*}
\langle \nabla f(x), x-z\rangle &\geq f(x) - f(z) + \frac\eps2 \,\|x\|^2+ \eps \langle x, z-x\rangle -\frac\eps2\, \|z\|^2\\
	&= f(x) - f(z) + \frac\eps2 \,\|x\|^2+ \eps\langle x, z\rangle - \eps \|x\|^2 -\frac\eps2\, \|z\|^2\\
	&=  f(x) - f(z) - \frac\eps2 \|z-x\|^2.\qedhere
\end{align*}
\end{proof}

Before proving Theorem \ref{theorem discrete}, we recall a well-known auxiliary result.

\begin{lemma}\citep[Lemma 13]{gupta2023achieving}\label{lemma gd}
    Assume that $\nabla f$ is Lipschitz-continuous with Lipschitz-constant $L$. Then
    \[
    f(x-\eta g) \leq f(x) - \eta\langle \nabla f(x), g\rangle + \frac{L\eta^2}2\,\|g\|^2.
    \]
\end{lemma}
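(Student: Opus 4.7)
The plan is to derive the descent lemma from the fundamental theorem of calculus applied along the segment from $x$ to $x-\eta g$, using the Lipschitz continuity of $\nabla f$ to control the remainder. Concretely, I would set $\varphi(t) = f(x - t\eta g)$ for $t \in [0,1]$. Since $\nabla f$ is continuous (being Lipschitz), $\varphi$ is continuously differentiable with $\varphi'(t) = -\eta\,\langle \nabla f(x - t\eta g),\, g\rangle$, and the fundamental theorem of calculus gives
\[
f(x-\eta g) - f(x) \;=\; -\eta \int_0^1 \langle \nabla f(x - t\eta g),\, g\rangle \dt.
\]

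Next, I would isolate the first-order term by adding and subtracting $\nabla f(x)$ inside the integrand:
\[
f(x-\eta g) - f(x) \;=\; -\eta\,\langle \nabla f(x), g\rangle \;-\; \eta\int_0^1 \langle \nabla f(x-t\eta g) - \nabla f(x),\, g\rangle \dt.
\]
The error term is then bounded using Cauchy--Schwarz together with the hypothesis $\|\nabla f(y) - \nabla f(z)\| \leq L\|y-z\|$: setting $y = x - t\eta g$ and $z = x$ yields $\|\nabla f(x - t\eta g) - \nabla f(x)\| \leq L t \eta \|g\|$, so the remainder is bounded in absolute value by $\eta \int_0^1 L t \eta \|g\|^2 \dt = \frac{L\eta^2}{2}\|g\|^2$. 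Combining the two displays produces the claimed inequality.

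There is no real obstacle here, as this is a textbook consequence of Lipschitz continuity of $\nabla f$. The only subtlety worth flagging is that the hypothesis does \emph{not} grant $C^2$-smoothness of $f$, so the argument must go through the fundamental theorem of calculus rather than a second-order Taylor expansion involving $D^2 f$; this is legitimate because $t\mapsto \nabla f(x-t\eta g)$ is Lipschitz and hence absolutely continuous, which is exactly the regularity the calculus step requires.
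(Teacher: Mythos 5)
Your proof is correct and is the standard argument for the descent lemma: parametrize the segment, apply the fundamental theorem of calculus, separate the first-order term, and bound the remainder via Cauchy--Schwarz and Lipschitz continuity. The paper does not reproduce a proof of this lemma but instead cites it from \citet{gupta2023achieving}; the argument given there is the same one you present, so there is nothing to reconcile. Your closing remark about $C^1$ (as opposed to $C^2$) regularity being the operative hypothesis is a good observation and is handled correctly by going through the integral form rather than a second-order Taylor expansion.
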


\discrete*
\begin{proof}%
{\bf Setup.} Denote by $ P^\bot = I -  P$ the orthogonal projection onto the orthogonal complement of the space which $ P$ projects onto. Consider the Lyapunov sequence defined by
\begin{align*}
    \L_n = f(x_n) - \inf f + \frac12\|\  P^\bot v_n + \sqrt\mu (x_n' -\pi(x_n')) \|^2 + \frac{(1+\sqrt{\mu\eta})^2}{2(1-\sqrt{\mu\eta})}\| P v_n\|^2.
\end{align*}
This is a variation of the usual Lyapunov sequence in which we separately analyze the tangential and normal velocities. Note however that 
\[
\frac{(1+\sqrt{\mu\eta})^2}{(1-\sqrt{\mu\eta})} = 1 + O(\sqrt\eta),
\]
i.e.\ if $\eta\to0$, we recover the Lyapunov function in the continuous time setting where tangential and normal velocity are not separated:
\[
 \|\  P^\bot v + \sqrt\mu(x -\pi(x)) \|^2 + \frac{(1+\sqrt{\mu\eta})^2}{2(1-\sqrt{\mu\eta})}\| P v\|^2 \to \|\  P^\bot v + \sqrt\mu(x -\pi(x)) \|^2 + \| P v\|^2 = \|v + \sqrt\mu(x-\pi(x))\|^2
\]
as $\eta\to 0$ since the vectors $ P v$ and $ P^\bot v + x -\pi(x)$ are orthogonal and $v=  P v +  P^\bot v$.
We want to show that $\L_{n+1} \leq (1-\sqrt{\mu\eta})\L_n$. Note that $1-\sqrt{\mu\eta}\geq 1-\sqrt{\frac\mu L} \geq 0$ since $\mu\leq L$ by Lemma \ref{lemma geometric implications}.

For simplicity, we assume that $x^*=0$, i.e.\ $x_n' - \pi(x_n') = x_n' -  P x_n' =  P^\bot x_n'$.

{\bf Step 1.} Since $f$ is $L$-smooth and $\eta\leq 1/L$, we have
\[
f(x_{n+1}) \leq f(x_n') - \left(1-\frac{L\eta}2\right)\eta\,\|\nabla f(x_n')\|^2 = f(x'_n) - \frac {\eta}2\,\|\nabla f(x'_n)\|^2
\]
by Lemma \ref{lemma gd} with $x=x_n'$ and $g=\nabla f(x_n')$.

{\bf Step 2.} 
We compute
\begin{align*}
 P^\bot v_{n+1} &+ \sqrt\mu  P^\bot x'_{n+1}
	=  P^\bot v_{n+1} +  \sqrt\mu  P^\bot(x'_n + \sqrt\eta v_{n+1}- \eta\df)\\
	&= ( P^\bot+ \sqrt{\mu\eta}  P^\bot) v_{n+1} + \sqrt\mu P^\bot x'_n - \eta\sqrt\mu\, P^\bot \df\\
	&= \rho(1+\sqrt{\mu\eta}) P^\bot v_n + \sqrt\mu  P^\bot x'_n - \sqrt\eta\big(\sqrt{\mu\eta}+\rho(1+\sqrt\mu \sqrt\eta) \big)\, P^\bot \df \\
 &= (1-\sqrt{\mu\eta}) P^\bot v_n + \sqrt\mu  P^\bot x'_n - \sqrt\eta\, P^\bot \df,
\end{align*}
where we simplify the coefficients in the last step by substituting $\rho = \frac{1-\sqrt{\mu\eta}}{1+\sqrt{\mu\eta}}.$ So,
\begin{align*}
\frac12 \big\| P^\bot v_{n+1} + \sqrt\mu  P^\bot x'_{n+1}\big\|^2
	&= \frac{(1 - \sqrt{\mu\eta})^2}2\| P^\bot v_n\|^2 +\frac\mu2 \| P^\bot x'_n\|^2 + \frac{\eta}2 \| P^\bot \df\|^2 \\
 &\quad + \sqrt \mu(1 - \sqrt{\mu\eta}) \langle  P^\bot v_n, \, P^\bot x'_n\rangle - \sqrt{\mu\eta}\,\langle  \df,  P^\bot x'_n\rangle\\
	&\quad - \sqrt\eta(1-\sqrt{\mu\eta})\langle \df,  P^\bot v_n\rangle.
\end{align*}
Recall that since both $ P$ and $ P^\bot$ are orthogonal projections, for any $x,y \in \R^d$, $\langle  P x,  P y \rangle = \langle  P x, y \rangle = \langle x,   P y \rangle$, and the analogous result holds for $ P^\bot$ as well.

{\bf Step 3.} Now we expand the last term in the Lyapunov sequence,
\begin{align*}
    \frac{(1+\sqrt{\mu\eta})^2}{2(1-\sqrt{\mu\eta})}\|  P v_{n+1} \|^2 &= \frac{(1+\sqrt{\mu\eta})^2}{2(1-\sqrt{\mu\eta})} \rho^2\|  P(v_n - \sqrt\eta \df )  \|^2\\
    &= \frac{1-\sqrt{\mu\eta}}{2}\left( \| P v_n\|^2 + \eta\| P \df\|^2 - 2 \sqrt\eta \langle \df,  P v_n\rangle \right) 
    .%
\end{align*}

{\bf Step 4.} We add the expressions from the previous steps and use the fact that $ P v_n +  P^\bot v_n = v_n$ to get
\begin{align*}
\L_{n+1}
	&= \frac{(1 - \sqrt{\mu\eta})^2}2\| P^\bot v_n\|^2 +\frac\mu2 \| P^\bot x'_n\|^2 + \frac{\eta}2 \| P^\bot \df\|^2 \\
 &\quad + \sqrt \mu(1 - \sqrt{\mu\eta}) \langle  P^\bot v_n, \, P^\bot x'_n\rangle - \sqrt{\mu\eta}\,\langle  \df,  P^\bot x'_n\rangle\\
	&\quad - \sqrt\eta(1-\sqrt{\mu\eta})\langle \df, v_n\rangle + \frac{1-\sqrt{\mu\eta}}{2} \| P v_n\|^2\\
 &\quad + \frac{\eta(1-\sqrt{\mu\eta})}{2}\| P \df\|^2 + f(x_{n+1}) - \inf f. 
\end{align*}

Using \eqref{eq assumption convexity wrt minimizer} and \eqref{eq assumption mild non-convexity} to bound the inner products $\langle \df, v_n\rangle$ and $\langle  \df,  P^\bot x'_n\rangle$,
\begin{align*}
\L_{n+1}
	&\leq \frac{(1 - \sqrt{\mu\eta})^2}2\| P^\bot v_n\|^2 +\frac\mu2 \| P^\bot x'_n\|^2 + \frac{\eta}2 \| P^\bot \df\|^2 \\
 &\quad + \sqrt \mu(1 - \sqrt{\mu\eta}) \langle  P^\bot v_n, \, P^\bot x'_n\rangle 
 - \sqrt{\mu\eta}\, \left( f(x'_n) - \inf f + \frac \mu 2 \| P^\bot x'_n \|^2 \right)  \\
	&\quad - (1-\sqrt{\mu\eta}) \left( f(x'_n) - f(x_n) - \frac{\eps}2 \| \sqrt\eta\,v_n \|^2 \right)
 + \frac{1-\sqrt{\mu\eta}}{2} \| P v_n\|^2\\
 &\quad + \frac{\eta(1-\sqrt{\mu\eta})}{2}\| P \df\|^2 + f(x'_n) - \frac{\eta}{2}\|\df\|^2 - \inf f. 
\end{align*}
Now, we use Pythagoras theorem, i.e.\ for all $w\in \R^d$, $\| P w\|^2 + \|  P^\bot w \|^2 = \| P w +  P^\bot w \|^2 = \|w\|^2$, and rearrange some of the terms,
\begin{align*}
    \L_{n+1}
	&\leq \frac{(1 - \sqrt{\mu\eta})^2 + (1 - \sqrt{\mu\eta})\eta\eps}2\| P^\bot v_n\|^2 + \frac{(1-\sqrt{\mu\eta})(1+\eta\eps)}{2} \| P v_n\|^2\\
 &\quad + \frac{\mu(1-\sqrt{\mu\eta})}2 \| P^\bot x'_n\|^2 + \frac{\eta-\eta}2 \| \df\|^2 - \frac{\sqrt\eta^3\sqrt\mu}{2}\| P \df\|^2 \\
 &\quad + \sqrt \mu(1 - \sqrt{\mu\eta}) \langle  P^\bot v_n, \, P^\bot x'_n\rangle \\
&\quad + (1-\sqrt{\mu\eta} - 1 + \sqrt{\mu\eta}) f(x'_n) + (1-\sqrt{\mu\eta})(f(x_n) - \inf f).
\end{align*}
The coefficients of $f(x'_n)$ and $\|\df\|^2$ are zero and the coefficient of $\| P \df \|^2$ is negative, so we can disregard those terms. If $\eps \leq \sqrt{\mu/\eta}$, the coefficient $\| P^\bot v_n\|^2$ can be bounded as
\begin{align*}
     \frac{(1 - \sqrt{\mu\eta})(1 - \sqrt{\mu\eta} + \eta\eps)}2 \leq \frac{(1 - \sqrt{\mu\eta})(1 - \sqrt{\mu\eta} + \sqrt{\mu\eta})}2
     \leq \frac{1 - \sqrt{\mu\eta}}2,
\end{align*} and the coefficient of $\| P^\bot v_n\|^2$ can be bounded as
\begin{align*}
    \frac{(1-\sqrt{\mu\eta})(1+\eta\eps)}{2} \leq \frac{(1-\sqrt{\mu\eta})(1+\sqrt{\mu\eta})}{2} \leq \frac{(1 +\sqrt{\mu\eta})^2}{2}.
\end{align*}
Thus, we conclude that
\begin{align*}
    \L_{n+1}
	&\leq (1-\sqrt{\mu\eta})(f(x_n) - \inf f) + \frac{1 - \sqrt{\mu\eta}}2\| P^\bot v_n\|^2 \\
 &\quad + \frac{\mu(1-\sqrt{\mu\eta})}2 \| P^\bot x'_n\|^2 + \sqrt\mu(1 - \sqrt{\mu\eta}) \langle  P^\bot v_n, \, P^\bot x'_n\rangle \\
&\quad + \frac{(1 +\sqrt{\mu\eta})^2}{2} \| P v_n\|^2 \\
& = (1-\sqrt{\mu\eta})\L_n.
\end{align*}
Consequently,
\[
 f(x_n) - \inf f \leq \L_n \leq (1-\sqrt{\mu\eta})^n L_0 = (1-\sqrt{\mu\eta})^n\left[ f(x_0) - \inf f + \frac\mu2 \| P^\bot x_0\|^2 \right].\qedhere
\]
\end{proof}

\subsection{Proof of Theorems \ref{theorem stochastic additive} and \ref{theorem decreasing learning rate}: Acceleration with additive noise}

The proof of Theorem \ref{theorem stochastic additive} mimics that of Theorem \ref{theorem discrete} with minor modifications to account for stochastic gradient estimates. Since $\omega_n$ is stochastically independent of anything which has happened in the algorithm before, in particular of $x_n', v_n$, we have the following.

\begin{lemma}\label{lemma stochastic identities}
For all $n\in\mathbb N$, we have
    \begin{align*}
        \E\big[\langle g(x_n', \omega_n), \nabla f(x_n')\rangle\big] &= \E\big[\|\nabla f(x_n')\|^2]\\
        \E\big[\langle g(x_n', \omega_n), v_n\rangle\big] &= \E\big[\langle \nabla f(x_n'), v_n\rangle]\\
        \E\big[\langle g(x_n', \omega_n),  P^\bot x_n'\rangle\big] &= \E\big[\langle \nabla f(x_n'),  P^\bot x_n'\rangle]\\
        \E\big[\|g_n\|^2\big] &= \E\big[\|\nabla f(x_n')\|^2\big] + \E\big[\|g_n - \nabla f(x_n')\|^2\big]
    \end{align*}
    where the expectations are taken over the (potentially random) initial condition $x_0$ as well as the random coefficients $\omega_0, \dots, \omega_n$ which govern the gradient estimates.
\end{lemma}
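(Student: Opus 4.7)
The plan is to invoke the tower property of conditional expectation with respect to the filtration generated by the noise. Let $\mathcal F_{n-1}$ denote the $\sigma$-algebra generated by the initial condition $x_0$ and the independent noise samples $\omega_0, \dots, \omega_{n-1}$. By the recursive definition of the scheme \eqref{eq nesterov}, the iterates $x_n, v_n$ and the intermediate point $x_n'$ are measurable with respect to $\mathcal F_{n-1}$, while $\omega_n$ is independent of $\mathcal F_{n-1}$ by assumption. Hence $\nabla f(x_n')$, $v_n$ and $\Pi^\bot x_n'$ are all $\mathcal F_{n-1}$-measurable, whereas $g_n = g(x_n', \omega_n)$ has conditional expectation $\E[g_n \mid \mathcal F_{n-1}] = \nabla f(x_n')$ by the unbiasedness assumption \eqref{eq stochastic gradient}.

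For each of the first three identities, I would condition on $\mathcal F_{n-1}$, pull the $\mathcal F_{n-1}$-measurable factor out of the conditional expectation, and apply unbiasedness. For example, for the first identity,
\begin{align*}
\E\big[\langle g_n, \nabla f(x_n')\rangle\big]
&= \E\big[\E[\langle g_n, \nabla f(x_n')\rangle \mid \mathcal F_{n-1}]\big] \\
&= \E\big[\langle \E[g_n \mid \mathcal F_{n-1}], \nabla f(x_n')\rangle\big]
= \E\big[\|\nabla f(x_n')\|^2\big],
\end{align*}
and the identities involving $v_n$ and $\Pi^\bot x_n'$ follow by identical reasoning with the respective $\mathcal F_{n-1}$-measurable vectors in place of $\nabla f(x_n')$.

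For the fourth identity I would decompose $g_n = \nabla f(x_n') + (g_n - \nabla f(x_n'))$ and expand the squared norm as
\[
\|g_n\|^2 = \|\nabla f(x_n')\|^2 + 2\langle \nabla f(x_n'), g_n - \nabla f(x_n')\rangle + \|g_n - \nabla f(x_n')\|^2.
\]
Conditioning the cross term on $\mathcal F_{n-1}$ gives $\langle \nabla f(x_n'), \E[g_n - \nabla f(x_n') \mid \mathcal F_{n-1}]\rangle = 0$, so taking total expectation and using the tower property yields the claimed Pythagorean-type decomposition. The finiteness of all quantities is guaranteed by the second part of \eqref{eq stochastic gradient}, together with the variance bound \eqref{eq variance bound} which ensures that $\|g_n - \nabla f(x_n')\|^2$ has finite expectation whenever $\|\nabla f(x_n')\|^2$ does. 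There is no real obstacle here; the lemma is a bookkeeping step whose only subtlety is correctly identifying the filtration with respect to which $x_n'$, $v_n$ and $\Pi^\bot x_n'$ are measurable but $\omega_n$ is independent.
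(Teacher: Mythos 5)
Your proof is correct and uses the standard tower-property argument, which is exactly the approach taken in the cited reference \citep[Lemma 15]{gupta2023achieving} (the paper itself does not spell out a proof, only pointing to that lemma and noting that the second identity follows analogously). The only subtlety you correctly flag — that $x_n'$, $v_n$, and $\Pi^\bot x_n'$ are $\mathcal F_{n-1}$-measurable while $\omega_n$ is independent of $\mathcal F_{n-1}$ — is indeed the whole content of the lemma.
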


A proof can be found in \citep[Lemma 15]{gupta2023achieving}. The second identity, which is not included therein, can be proved analogously.
As an application, we prove a stochastic analogue of Lemma \ref{lemma gd}.

\begin{lemma}\label{lemma sgd}
    Assume that $f$ is $L$-smooth and $\E\big[\|g(x,\omega)-\nabla f(x)\|^2\big] \leq \sigma_a^2 + \sigma_m^2\|\nabla f(x)\|^2$ for all $x\in\R^d$. Then, if $x, \omega$ are independent random variables, we have
    \[
    \E_{(x,\omega)}\big[f(x-\eta g(x,\omega))\big] \leq \E\big[f(x)\big] - \left(1- \frac{L(1+\sigma^2)\eta}2\right)\eta \,\E\big[\|\nabla f(x)\|^2\big] + \frac{L\eta^2}2\,\sigma_a^2.
    \]
\end{lemma}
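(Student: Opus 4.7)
The plan is a direct stochastic adaptation of Lemma \ref{lemma gd}. Fix a realization $(x,\omega)$; pointwise $L$-smoothness applied to the point $x$ and increment $-\eta g(x,\omega)$ gives the descent inequality
\[
f(x - \eta g(x,\omega)) \leq f(x) - \eta \langle \nabla f(x), g(x,\omega)\rangle + \frac{L\eta^2}{2}\,\|g(x,\omega)\|^2.
\]
I would then take the conditional expectation given $x$, which is clean because $\omega$ is independent of $x$. Lemma \ref{lemma stochastic identities} (the unbiasedness identity) gives $\E_{\omega}\big[\langle \nabla f(x), g(x,\omega)\rangle \mid x\big] = \|\nabla f(x)\|^2$, and the bias-variance decomposition combined with the variance bound \eqref{eq variance bound} yields
\[
\E_{\omega}\big[\|g(x,\omega)\|^2 \mid x\big] \;=\; \|\nabla f(x)\|^2 + \E_{\omega}\big[\|g(x,\omega)-\nabla f(x)\|^2 \mid x\big] \;\leq\; (1+\sigma_m^2)\|\nabla f(x)\|^2 + \sigma_a^2.
\]

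Substituting these two estimates into the $L$-smoothness bound and collecting the coefficients of $\|\nabla f(x)\|^2$ gives
\[
\E_{\omega}\big[f(x-\eta g(x,\omega)) \mid x\big] \;\leq\; f(x) - \eta\left(1 - \frac{L(1+\sigma_m^2)\eta}{2}\right)\|\nabla f(x)\|^2 + \frac{L\eta^2}{2}\,\sigma_a^2.
\]
Taking the outer expectation over the (possibly random) $x$ and using the tower property produces the stated inequality, reading the symbol $\sigma^2$ in the target bound as $\sigma_m^2$. There is no real obstacle in this proof: the only point requiring care is the order of operations, namely conditioning on $x$ before invoking the variance bound so that $\nabla f(x)$ acts as a deterministic quantity and the independence of $\omega$ from $x$ can be used exactly once to eliminate the cross term.
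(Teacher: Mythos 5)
Your proof is correct and follows essentially the same approach as the paper: apply the $L$-smoothness descent lemma pointwise, eliminate the cross term via unbiasedness, expand $\E[\|g\|^2]$ by a bias-variance decomposition, and invoke the variance bound. The only cosmetic difference is that you condition on $x$ explicitly before taking the outer expectation, whereas the paper works directly with the joint expectation by citing the identities of Lemma \ref{lemma stochastic identities}; both are valid, and you also correctly identified that the $\sigma^2$ appearing in the lemma statement should read $\sigma_m^2$.
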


\begin{proof}
Recall that for any $x, \eta, g$, the following holds
\begin{align*}
    f(x- \eta g) &\leq f(x) - \eta\langle \nabla f(x), g\rangle + \frac{L\eta^2}2\,\|g\|^2
\end{align*}
by Lemma \ref{lemma gd}.
We now assume that $g= g(x,\omega)$ is a random estimator for $\nabla f(x)$, where $x$ may be random, but $\omega$ is independent of $x$. Then, by Lemma \ref{lemma stochastic identities}, we have
\begin{align*}
    \E\big[f(x-\eta g)\big]&\leq \E\big[f(x)\big] - \eta \,\E\big[\|\nabla f(x)\|^2\big] + \frac{L\eta^2}2\,\E\big[\|g\|^2\big]\\
    &=\E\big[f(x)\big] - \eta \,\E\big[\|\nabla f(x)\|^2\big] + \frac{L\eta^2}2\,\big\{\E\big[\|\nabla f(x)\|^2 + \E\big[\|g-\nabla f(x)\|^2\big]\big\}\\
    &\leq \E\big[f(x)\big] - \left(1-\frac{L\eta}2\right)\eta \,\E\big[\|\nabla f(x)\|^2\big] + \frac{L\eta^2}2\big\{\sigma_a^2 + \sigma_m^2\,\E\big[\|\nabla f(x)\|^2\big]\\
    &= \E\big[f(x)\big] - \left(1-\frac{L(1+\sigma_m^2)\eta}2\right)\eta \,\E\big[\|\nabla f(x)\|^2\big] + \frac{L\eta^2}2\,\sigma_a^2.\qedhere
\end{align*}
\end{proof}

Finally, we provide an auxiliary result to resolve a recursion.

\begin{lemma}\label{lemma recursion}
    Assume a sequence $x_n$ satisfies the recursive estimate $x_{n+1} \leq a x_n + b$ for $a\in (0,1)$ and $b\geq 0$. Then
    \[
    x_n \leq a^n x_0 + \frac{b}{1-a}.
    \]
\end{lemma}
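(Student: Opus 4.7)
The plan is to prove this by a straightforward induction (or, equivalently, by unrolling the recursion and summing a geometric series). First I would iterate the estimate $x_{n+1} \leq a x_n + b$ to obtain
\[
x_n \leq a^n x_0 + b \sum_{k=0}^{n-1} a^k.
\]
Then, since $a \in (0,1)$, the finite geometric sum is bounded by $\sum_{k=0}^{n-1} a^k = \frac{1 - a^n}{1 - a} \leq \frac{1}{1-a}$. Combining these two estimates (and using $b \geq 0$ to preserve the direction of the inequality) yields the claim.

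An equivalent and perhaps more conceptual approach is to identify the fixed point of the affine map $z \mapsto az + b$, namely $z^* = \frac{b}{1-a}$, and pass to the shifted sequence $y_n := x_n - z^*$. A direct substitution into the recursion shows $y_{n+1} \leq a\, y_n$, so by a trivial induction $y_n \leq a^n y_0$, i.e.
\[
x_n \leq a^n\left(x_0 - \frac{b}{1-a}\right) + \frac{b}{1-a} \leq a^n x_0 + \frac{b}{1-a},
\]
where the last inequality uses $a^n \geq 0$ and $\frac{b}{1-a} \geq 0$.

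There is no real obstacle: the statement is a classical contraction estimate for a scalar affine recursion, and the only point one has to be slightly careful about is ensuring that signs are handled correctly when dropping the nonpositive term $-a^n \frac{b}{1-a}$ in the second approach, or when bounding $\frac{1-a^n}{1-a}$ by $\frac{1}{1-a}$ in the first.
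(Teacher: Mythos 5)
Your proposal is correct, and your second approach (shifting by the fixed point $z^* = \frac{b}{1-a}$, showing $y_{n+1}\le a\,y_n$, and dropping the nonpositive term $-a^n\frac{b}{1-a}$) is exactly the argument the paper gives, which defines $y_n := x_n + \frac{b}{a-1}$. The geometric-series unrolling in your first paragraph is an equally valid alternative but is not the route taken in the paper.
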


\begin{proof}
    Consider the sequence $y_n:= x_n+ \frac{b}{a-1}$. Then
    \begin{align*}
    y_{n+1} &= x_{n+1} + \frac{b}{a-1} \leq ax_n + b+ \frac{b}{a-1} = ax_n + \frac{a-1}{a-1}b + \frac{b}{a-1} = ax_n + \frac{a b}{a-1}\\
    &= a\left(x_n+\frac{b}{a-1}\right)= ay_n.
    \end{align*}
    In particular, $y_{n} \leq a^n y_0$, so
    \[
    x_n = y_n + \frac b{1-a} \leq a^n\left(x_0 + \frac{b}{a-1}\right) + \frac{b}{1-a}.
    \]
    If $b>0$, the estimate follows since $b/(a-1)<0$.
\end{proof}

We now prove the first main result of this section.

\additive*
\begin{proof}
The proof is mostly identical to that of Theorem \ref{theorem discrete} with minor modifications: In Step 1, we obtain the estimate
\[
\E[f(x_{n+1}]\leq \E[f(x_n')] - \frac{\eta}2 \,\E\big[\|\nabla f(x_n')\|^2\big] + \frac{L\sigma_a^2\eta^2}2
\]
by Lemma \ref{lemma sgd}. In the quadratic terms in steps 2 and 3, we need to take the expectation of $g_n$ rather than $\nabla f(x_n')$. By construction, the terms involving $\E[\|\nabla f(x_n')\|^2]$ still balance with the same parameters, leading to an additional contribution of $\eta\,\sigma_a^2$ due to the stochastic gradient estimates. Since $\eta\leq 1/L$, we can bound $L\eta^2\leq \eta$ so overall we obtain the estimate
\[
\L_{n+1} \leq (1-\sqrt{\mu\eta})\L_n + \sigma_a^2\eta.
\]
in place of $\L_{n+1} \leq (1-\sqrt{\mu\eta})\L_n$ since all other terms are identical under the expectation using Lemma \ref{lemma stochastic identities}. The claim now follows from Lemma \ref{lemma recursion}.
\end{proof}

For readers who are looking for a more detailed proof, we note that Theorem \ref{theorem stochastic additive} is a special case of Theorem \ref{theorem agnes} with $\sigma_m=0$, where we provide a full proof. 

In the same spirit, we sketch the proof of Theorem \ref{theorem decreasing learning rate}. Let us recall the statement.

\decreasingstep*

\begin{proof}
Note that in the proof of Theorem \ref{theorem agnes}, we build on the relationships
\begin{align*}
f(x_{n+1}) &\leq f(x_n') - \frac{\eta_n}2 \,\|\nabla f(x_n')\|^2\\
x_{n+1}' &= x_n' - \eta_n g_n + \sqrt{\alpha_n}v_n\\
v_{n+1} &= \rho_n(v_n - \sqrt{\alpha_n}g_n)
\end{align*}
and do not enter further into the recursion. We additionally note that if $\eta_n$ is a {\em monotone decreasing} sequence, then the sequence $\lambda_{n+1} := \frac{(1+ \sqrt{\mu\eta_n})^2}{1-\sqrt\mu \eta_n}$ is also monotone decreasing. 
Hence, if $\eta_n\leq \frac1{L(1+\sigma_m^2)}$ for all $n\in\N$, then by the same proof as Theorem \ref{theorem stochastic additive}, the sequence
\[
\L_{n+1} := f(x_n) -\inf f + \frac12\big\| P^\bot v_n + \sqrt\mu\, P^\bot x_n'\big\|^2 + \frac{\lambda_n}2 \,\| P v_n\|^2
\]
satisfies
\begin{align*}
\L_{n} &\leq \big(1-\sqrt{\mu\eta_n})\big)\left\{ f(x_n) -\inf f + \frac12\big\| P^\bot v_n + \sqrt\mu\, P^\bot x_n'\big\|^2 + \frac{\lambda_{n+1}}2 \,\| P v_n\|^2\right\} + \sigma_a^2\eta_n\\
    &\leq \big(1-\sqrt{\mu\eta_n})\big)\L_n + \sigma_a^2\eta_n
\end{align*}
if the parameters are chosen as in the theorem statement. If specifically $\eta_n = \frac{1}{\mu\,(n+n_0+1)^2}$ for $n_0 = \sqrt{L/\mu}$, then
\[
\L_{n+1} \leq \left(1- \frac1{n+n_0+1}\right) \L_n + \frac{\sigma_a^2}{\mu(n+n_0+1)^2} = \frac{n+n_0}{n+n_0+1}\L_n  + \frac{\sigma_a^2}{\mu(n+n_0+1)^2}.
\]
Thus the sequence $z_n:= (n+n_0)\L_n$ satisfies the relation
\[
z_{n+1} = (n+n_0+1)\L_{n+1} \leq (n+n_0)\L_n + \frac{\sigma_a^2}{\mu(n+n_0+1)} = z_n + \frac{\sigma_a^2}{\mu(n+n_0+1)}
\]
so
\begin{align*}
z_n &= z_0 + \sum_{k=1}^n (z_k-z_{k-1}) \leq z_0 + \sum_{k=0}^{n-1}\frac{\sigma_a^2}{\mu(k+n_0+1)} \leq n_0\L_0 + \frac{\sigma_a^2}{\mu}\int_0^{n-1} \frac1{n_0+t}\dt\\
    &\leq n_0\L_0 + \frac{\sigma_a^2}{\mu}\log\left( 1+ \frac{n-1}{n_0}\right).
\end{align*}
Overall, we find that
\begin{align*}
\E\big[f(x_n) - \inf f\big] &\leq \L_n = \frac{z_n}{n+n_0}\\
    &\leq \frac{\sqrt{\frac L\mu}\,\E\left[f(x_0) - \inf f + \frac12\|x_0-\pi(x_0)\|^2\right] + \frac{\sigma_a^2}{\mu}\,\log\big(1+\frac{n-1}{n_0}\big)}{n+n_0}.
\end{align*}
\end{proof}

\subsection{Proof of Theorem \ref{theorem agnes}: Acceleration with both additive and multiplicative noise}\label{appendix agnes}

In the strongly convex setting, \cite{gupta2023achieving} state a version of Theorem \ref{theorem agnes} in slightly greater generality in terms of choosing variables. The same more general proof goes through also here after we account for the tangential and normal components of the velocity as in the proof of Theorem \ref{theorem discrete}.

\agnes*
\begin{proof}
	{\bf Setup.} Mimicking the proof of Theorem \ref{theorem discrete}, consider the sequence
	\[
	\L_n = \E\big[ f(x_n) - f(x^*)\big] + \frac12\,\E\left[\big\| b\, P^\bot v_n + a\big(x_n' - \pi(x_n')\big)\|^2\right] + \frac\lambda2\,\E\left[ \|  P v_n \|^2 \right]
	\]
	for constants
	\[
	b = \sqrt{\frac{(1+\sigma_m^2)\alpha}\eta}, \qquad \lambda = \frac{(b+\sqrt{\mu}\gamma)^2}{b-\sqrt\mu\,\gamma}\,\frac{\gamma}{\sqrt\alpha}\qquad\text{where} \quad\gamma = \sqrt\mu(\eta-\alpha)+b\sqrt\alpha.
	\]
	The constants will be motivated below where they are introduced. Note that we have $\eta =\alpha$ if $\sigma_m=0$ and thus $b=1$ and $\gamma = \sqrt\alpha$, recovering the situation of Theorem \ref{theorem stochastic additive}.
	We want to show that $\L_{n+1} \leq (1-\sqrt\mu\,\sqrt\alpha/b) \L_n$. For simplicity, we again assume without loss of generality that $\pi(x) =  P x$, i.e.\ $x^*=0$ throughout the proof.
	
	{\bf Step 1.} Consider the first term first. Note that
	\begin{align*}
		\E\big[ f(x_{n+1})\big] &= \E\big[ f(x'_n - \eta g_n)\big]
		\leq \E\big[ f(x'_n)\big] -  \left(1-\frac {L(1+\sigma^2)}2\eta\right)\eta\,\E\big[\|\nabla f(x'_n)\|^2\big] + \frac{L\eta^2}2\,\sigma_a^2\\
		&\leq \E\big[ f(x'_n)\big] -  \frac\eta2\,\E\big[\|\nabla f(x'_n)\|^2\big] + \frac{L\eta^2}2\,\sigma_a^2
	\end{align*}
	if $\eta\leq \frac1{L(1+\sigma^2)}$ by Lemma \ref{lemma sgd}.
	
	{\bf Step 2.} We now turn to the second term and use the definition of $x'_{n+1}$ from (\ref{eq agnes}),
	\begin{align*}
		b P^\bot v_{n+1} + \sqrt\mu P^\bot x'_{n+1} &= b P^\bot v_{n+1} + \sqrt\mu P^\bot( x_{n}' + \sqrt\alpha\,v_{n+1} - \eta g_n )\\
		&= (b+\sqrt{\mu\alpha})\rho  P^\bot\big(v_n - \sqrt\alpha g_n) + \sqrt\mu P^\bot x_n' - \sqrt\mu\eta  P^\bot g_n\\
		&= (b+\sqrt{\mu\alpha})\rho P^\bot v_n + \sqrt\mu P^\bot x_n' - \big(\sqrt\mu\eta+ \rho(b+\sqrt{\mu\alpha})\sqrt\alpha\big) P^\bot g_n.
	\end{align*}
	In analogy to the proof of Theorem \ref{theorem discrete}, we have $\rho = \frac{b-\sqrt{\mu\alpha}}{b+\sqrt{\mu\alpha}}$, so
	\[
	b P^\bot v_{n+1} + \sqrt\mu P^\bot x'_{n+1} = (b-\sqrt{\mu\alpha}) P^\bot v_n + \sqrt\mu P^\bot x_n' - \big(\sqrt\mu\eta + (b-\sqrt{\mu\alpha})\sqrt\alpha\big)g_n.
	\]
	In the deterministic case where $\eta=\alpha$, the coefficient $\sqrt\mu(\eta-\alpha)+b\sqrt\alpha$ of $g_n$ simplified to $b\sqrt\alpha$. It does not in this more general setting anymore, so we introduce a new notation: $\gamma = \sqrt\mu(\eta-\alpha) + b\sqrt\alpha$.
	
	Taking expectation of the square, we find that
	\begin{align*}
		\E&\left[\big\|b P^\bot v_{n+1} + \sqrt\mu\, P^\bot x'_{n+1}\big\|^2\right] \\
		&= (b-\sqrt{\mu\alpha})^2\, \E\big[\| P^\bot v_n\|^2\big] + 2\sqrt\mu(b-\sqrt{\mu\alpha}) \,\E\big[ \langle  P^\bot v_n,  P^\bot x'_n\rangle\big] + a^2\E\big[\| P^\bot x'_n\|^2\big]\\
		&\quad - 2(b-\sqrt{\mu\alpha})\gamma\, \E\big[ \langle g_n,  P^\bot v_n\rangle\big] - 2\sqrt\mu\gamma \,\E\big[
		\langle g_n,  P^\bot x'_n\rangle\big] + \gamma^2\,\E\big[\| P^\bot g_n\|^2\big]\\
		&= (b-\sqrt{\mu\alpha})^2\, \E\big[\| P^\bot v_n\|^2\big] + 2\sqrt\mu(b-\sqrt{\mu\alpha}) \,\E\big[ \langle  P^\bot v_n,  P^\bot x'_n\rangle\big] + \mu\E\big[\| P^\bot x'_n\|^2\big]\\
		&\quad - 2(b-\sqrt{\mu\alpha})\gamma\, \E\big[ \langle \nabla f(x_n'),  P^\bot v_n\rangle\big] - 2\sqrt\mu\gamma \,\E\big[
		\langle \nabla f(x_n'),  P^\bot x'_n\rangle\big] + \gamma^2\,\E\big[\| P^\bot g_n\|^2\big].
	\end{align*}
	
	{\bf Step 3.} We now consider the third term
	\begin{align*}
		\lambda\,\E\left[ \|  P v_{n+1} \|^2 \right] &= \lambda \rho^2 \,\E\left[ \|  P (v_n - \sqrt{\alpha} g_n )  \|^2\right]\\
		&= \lambda\rho^2\,\E\left[ \| P v_n\|^2 + \alpha\| P g_n\|^2 - 2\sqrt{\alpha}  \langle g_n,  P v_n\rangle \right]\\
		&= \lambda\rho^2\,\E\left[ \| P v_n\|^2 + \alpha\| P g_n\|^2 - 2\sqrt{\alpha}  \langle \nabla f(x_n'),  P v_n\rangle \right].
	\end{align*}
	
	{\bf Step 4.} We now add the estimates of steps 2 and 3, with 
	\[
	\lambda = \frac{(b-\sqrt{\mu\alpha})\gamma}{\rho^2\sqrt\alpha } = \frac{(b+\sqrt{\mu\alpha})^2}{b-\sqrt{\mu\alpha}}\,\frac{\gamma}{\sqrt\alpha}
	\]
	such that the coefficients $-2(b-\sqrt{\mu\alpha})\gamma$ of $\E[\langle \nabla f(x_n'),  P^\bot v_n\rangle]$ and $-2\lambda \rho^2\sqrt\alpha$ of $\E[\langle \nabla f(x_n'),  P v_n\rangle]$ coincide. Note that in the deterministic case $\gamma = b\sqrt\alpha = \sqrt\alpha$ and we recover the coefficient chosen in the proof of Theorem \ref{theorem discrete}.
	\begin{align*}
		\E&\left[\|b P^\bot v_{n+1} + \sqrt\mu P^\bot x'_{n+1}\|^2  + \lambda \|  P v_{n+1} \|^2 \right]\\
		&= (b-\sqrt{\mu\alpha})^2\, \E\big[\| P^\bot v_n\|^2\big] + 2\sqrt\mu(b-\sqrt{\mu\alpha}) \,\E\big[ \langle  P^\bot v_n,  P^\bot x'_n\rangle\big] + \mu\E\big[\| P^\bot x'_n\|^2\big]\\
		&\quad - 2(b-\sqrt{\mu\alpha})\gamma\, \E\big[ \langle \nabla f(x_n'), v_n\rangle\big] - 2\sqrt\mu\gamma \,\E\big[\langle \nabla f(x_n'),  P^\bot x'_n\rangle\big]+ \frac{(b-\sqrt{\mu\alpha})\gamma}{\sqrt\alpha}\,\E\big[ \| P v_n\|^2\big] \\
		& \quad + \gamma^2\,\E\big[\| P^\bot g_n\|^2\big] + (b-\sqrt{\mu\alpha})\gamma\sqrt{\alpha}\, \E\big[\| P g_n\|^2\big].
	\end{align*}
	We note that the coefficient of the norm of the tangential gradient is $(b-\sqrt{\mu\alpha})\sqrt\alpha\,\gamma\leq \gamma^2$ by the definition of $\gamma = (b-\sqrt{\mu\alpha})\sqrt\alpha+\sqrt\mu\eta$.
	Next, we combine this estimate with the bound on $f(x_{n+1})$ from Step 1 and use the geometric conditions \eqref{eq assumption convexity wrt minimizer} and \eqref{eq assumption mild non-convexity} on $f$ to control the inner products of $\nabla f(x_n')$ with $v_n$ and $ P^\bot x_n'$ in the previous expression as well as the variance bound \eqref{eq variance bound} for the gradient estimates:
	\begin{align*}
		\L_{n+1} 
		&\leq \E\big[ f(x'_n) - \inf f\big] -  \frac\eta2\,\E\big[\|\nabla f(x'_n)\|^2\big] + \frac{\gamma^2}2 \E\big[\|g_n\|^2\big] + \frac{L\eta^2}2\,\sigma_a^2\\
		&\quad +\frac{(b-\sqrt{\mu\alpha})^2}2\, \E\big[\| P^\bot v_n\|^2\big] + \sqrt\mu(b-\sqrt{\mu\alpha}) \,\E\big[\langle  P^\bot  v_n,  P^\bot x'_n\rangle\big] + \frac{\mu}2\E\big[\| P^\bot x'_n\|^2\big]\\
		&\quad - (b-\sqrt{\mu\alpha})\frac{\gamma}{\sqrt\alpha}\,\E\left[f(x_n') - f(x_n) - \frac{\eps\alpha}2\| v_n\|^2\right]\\
		&\quad - \sqrt\mu\gamma \,\E\left[f(x_n') - \inf f + \frac\mu2 \,\| P^\bot x_n'\|^2\right] + \frac{(b-\sqrt{\mu\alpha})\gamma}{2\sqrt\alpha}\,\E\big[ \| P v_n\|^2\big]\\
		&= \left(1- (b-\sqrt{\mu\alpha})\frac{\gamma}{\sqrt\alpha}-\sqrt\mu\gamma\right) \E\big[f(x_n')\big] -(1-\sqrt\mu\gamma)\inf f + (b-\sqrt {\mu\alpha})\frac{\gamma}{\sqrt\alpha}\,\E[f(x_n)]\\
		&\quad + \frac{\gamma^2(1+\sigma_m^2)-\eta}2\,\E\big[\|\nabla f(x_n')\|^2\big]+ \frac{L\eta^2 + \gamma^2}2\sigma_a^2 \\
		&\quad + \frac{(b-\sqrt{\mu\alpha})^2+(b-\sqrt{\mu\alpha})\eps\gamma\sqrt{\alpha}}2\,\E\big[\| P v_n\|^2\big] + \sqrt\mu(b-\sqrt{\mu\alpha}) \E\big[\langle  P^\bot x_n',  P^\bot v_n\rangle\big]\\
		&\quad + \frac{\mu - \sqrt\mu\gamma\mu}2\,\E\big[\| P^\bot x_n'\|^2] + \frac{(b-\sqrt{\mu\alpha})\gamma(1+\eps\alpha)}{2\sqrt\alpha}\,\E\big[\| P v_n\|^2\big].
	\end{align*}
	
	{\bf Step 5.} Recall that
	\[
	\alpha = \frac{1- \sqrt{\mu\eta(1+\sigma_m^2)}}{1-\sqrt{\mu(1+\sigma_m^2)\eta} + \sigma_m^2}\,\eta, \qquad b = \sqrt{\frac{(1+\sigma_m^2)\alpha}\eta}
	\]
	and thus
	\[
	\rho = \frac{b-\sqrt{\mu\alpha}}{b+\sqrt{\mu\alpha}} = \frac{\sqrt{(1+\sigma_m^2)\alpha/\eta} - \sqrt{\mu\alpha}}{\sqrt{(1+\sigma_m^2)\alpha/\eta}+\sqrt{\mu\alpha}} = \frac{1 - \sqrt{\frac{\mu\eta}{1+\sigma_m^2}}} {1 + \sqrt{\frac{\mu\eta}{1+\sigma_m^2}}} 
	\]
	as desired.
		
	Let us verify that $\gamma = \sqrt\alpha/b$. This is equivalent to 
	\[
	\sqrt\mu (\eta-\alpha) + \sqrt{\frac{1+\sigma_m^2}\eta}\alpha - \sqrt{\frac\eta{1+\sigma_m^2}} = \left(\sqrt{\frac{1+\sigma_m^2}\eta} -\sqrt\mu\right)\alpha + \left(\sqrt\mu\eta - \sqrt{\frac\eta{1+\sigma_m^2}}\right) = 0
	\]
	i.e.\ to the choice
	\[
	\alpha = \frac{\sqrt\mu\eta - \sqrt{\frac\eta{1+\sigma_m^2}}}{\sqrt\mu - \sqrt{\frac{1+\sigma_m^2}\eta}} =  \frac{\sqrt{\mu\eta} - \frac1{\sqrt{1+\sigma_m^2}}} { \sqrt{\mu\eta} - \sqrt{1+\sigma_m^2}}\,\eta = \frac{1- \sqrt{\mu\eta(1+\sigma_m^2)}}{1+\sigma_m^2 - \sqrt{\mu\eta(1+\sigma_m^2)}}\,\eta
	\]
	which we made above.
	In particular, we find that
	\[
	(b-\sqrt{\mu\alpha})\frac{\gamma}{\sqrt\alpha} + \sqrt\mu\gamma
	= \left(\frac b{\sqrt\alpha}-\sqrt\mu + \sqrt\mu\right)\,\gamma
	= \frac{b\gamma}{\sqrt\alpha} = 1
	\]
	and therefore
	\begin{align*}
		&\left(1- (b-\sqrt{\mu\alpha})\frac{\gamma}{\sqrt\alpha}-\sqrt\mu\gamma\right) \E\big[f(x_n')\big] +(1-\sqrt\mu\gamma)\inf f + (b- \sqrt{\mu\alpha})\frac{\gamma}{\sqrt\alpha}\,\E[f(x_n)]\\
		&\qquad= (1-\sqrt\mu\gamma)\,\E\big[f(x_n) - \inf f\big].
	\end{align*}
	In the coefficient of $\E\big[\|\nabla f(x_n')\|^2$, we have the cancellations
	\[
	(1+\sigma_m^2)\gamma^2 - \eta = (1+\sigma_m^2)\frac{\alpha}{b^2} -\eta = \frac{\eta}{(1+\sigma_m^2)\alpha}\,\alpha - \eta = 0.
	\]
	By the same analysis, the coefficient of additive noise is
	\[
	L\eta^2 + \gamma^2 = L\eta^2 + \frac{\eta}{1+\sigma_m^2},
	\]
	so overall
	\begin{align*}
		\L_{n+1} &\leq (1-\sqrt\mu\,\gamma) \,\E\big[f(x_n) - \inf f\big] + \frac{L\eta^2 + \frac{\eta}{1+\sigma_m^2}}2\,\sigma_a^2\\
		&\quad + \frac{(b-\sqrt{\mu\alpha})^2+(b-\sqrt{\mu\alpha})\eps\gamma\sqrt{\alpha}}2\,\E\big[\| P v_n\|^2\big] + \sqrt\mu(b-\sqrt{\mu\alpha}) \E\big[\langle  P^\bot x_n',  P^\bot v_n\rangle\big]\\
		&\quad + \frac{a^2 - \sqrt\mu\gamma\mu}2\,\E\big[\| P^\bot x_n'\|^2] + \frac{(b-\sqrt{\mu\alpha})\gamma(1+\eps\sqrt\alpha)}{2\sqrt\alpha}\,\E\big[\| P v_n\|^2\big].
	\end{align*}
	We now analyze the terms corresponding to the quadratic terms. Using $\gamma = \sqrt\alpha/b$, we obtain
	\begin{align*}
		\frac{(b-\sqrt{\mu\alpha})^2 + (b-\sqrt{\mu\alpha})\eps\gamma\sqrt{\alpha}}{b^2}
		&\leq \left(1-\frac{\sqrt{\mu\alpha}} {b}\right) \left(1+\frac{(\eps\gamma-\sqrt\mu)\sqrt\alpha}b\right)\\
		&= \left(1-\sqrt\mu\,\gamma\right)\left(1+\frac{(\eps\gamma-\sqrt\mu)\sqrt\alpha}b\right)\\
		&\leq 1-\sqrt\mu\,\gamma
	\end{align*}
	for the coefficient of $\E[\|v_n\|^2]$ if $\eps\gamma\leq a$, i.e.\ if
	$\eps\leq\frac{\sqrt\mu}\gamma = \sqrt{\frac{\mu(1+\sigma_m^2)}{\eta}}$ 
	Analogously, we see that the coefficient of $\E\big[\langle  P^\bot v_n, P^\bot x_n'\rangle\big]$ that 
	\[
	\sqrt\mu(b-\sqrt{\mu\alpha}) = \sqrt\mu b \,\frac{b-\sqrt{\mu\alpha}}b = \sqrt\mu b\left(1-\sqrt\mu\,\frac{\sqrt\alpha}b\right) = \sqrt\mu b(1-\sqrt\mu\,\gamma)
	\]
	and for the coefficient of $\E\big[\| P^\bot x_n'\|^2\big]$ that
	\[
	\frac{\mu - \sqrt\mu\gamma\mu}{\mu} = 1 - \frac{\gamma\mu}{\sqrt\mu} = 1- \sqrt\mu\gamma.
	\]
	Before proceeding to the next term, we observe that
	\[
	\gamma^2 = \frac{\eta}{1+\sigma^2} \geq \eta\frac{1-\sqrt{\mu\eta(1+\sigma^2)}}{1+\sigma^2 - \sqrt{\mu\eta(1+\sigma^2)}} = \alpha,
	\] and hence $\eps\leq \sqrt\mu/\gamma \leq \sqrt\mu\gamma/\alpha.$
	Finally, we note for the coefficient of $\E[\| P v_n\|^2]$ that
	\begin{align*}
		\frac{(b-\sqrt{\mu\alpha})\gamma (1+\eps{\alpha})}{\sqrt\alpha\,\lambda}
		&= \frac{(b-\sqrt{\mu\alpha})\gamma (1+\eps\alpha)(b-\sqrt{\mu\alpha})\sqrt\alpha}{\sqrt\alpha\,\gamma(b+\sqrt{\mu\alpha})^2}\\
		& = \left(\frac{b-\sqrt{\mu\alpha}}{b+\sqrt{\mu\alpha}}\right)^2(1+\eps\alpha) \\
		&\leq \left(\frac{b-\sqrt{\mu\alpha}}{b}\right)^2(1+\eps\alpha)\\
		&= \left(1- \sqrt\mu\gamma\right)^2(1+\eps\alpha)\\
		&\leq \left(1- \sqrt\mu\gamma\right)^2(1+ \sqrt\mu\gamma)\\
		&= (1-\sqrt\mu\gamma)\big(1- \mu\gamma^2)\\
		&\leq 1-\sqrt\mu\gamma
	\end{align*}
	as desired.
	Overall, we find that
	\begin{align*}
		\L_{n+1} &\leq (1-\sqrt\mu\gamma)\bigg\{\E[f(x_n)- \inf f] + \frac{b^2}2\,\E\big[\| P^\bot v_n\|^2\big] + ab \,\E\big[\langle  P^\bot v_n, P^\bot x_n'\rangle\big] \\
		&\quad + \frac{\mu}2\,\E\big[\| P^\bot x_n'\|^2\big] + \frac\lambda2 \,\E\big[\| P v_n\|^2\big]\bigg\} + \frac{L\eta^2 + \frac\eta{1+\sigma_m^2}}2\,\sigma_a^2\\
		&=  (1-\sqrt\mu\gamma)\bigg\{\E\big[f(x_n) - \inf f\big] + \frac12 \,\E\big[\| P^\bot v_n +  P^\bot x_n'\|^2\big] + \frac\lambda2\,\E\big[\| P v_n\|^2\big]\bigg\}\\
		&\quad + \frac{L\eta^2 + \frac{\eta}{1+\sigma_m^2}}2\,\sigma_a^2\\
		&= (1-\sqrt\mu\gamma)\,\L_n + \frac{L\eta^2 + \frac{\eta}{1+\sigma_m^2}}2\,\sigma_a^2.
	\end{align*}
	By Lemma \ref{lemma recursion}, we deduce
	\[
	\L_n \leq (1-\sqrt\mu\gamma)^n \L_0 + \frac{L\eta^2 + \frac{\eta}{1+\sigma_m^2}}{2\sqrt\mu\gamma}\,\sigma_a^2 = \frac{L(1+\sigma_m^2)\eta^2 + \eta}{2(1+\sigma_m^2)\sqrt\mu\,\sqrt{\eta/(1+\sigma_m^2)}}.
	\]
	Since $L(1+\sigma_m^2)\eta \leq 1$, we can simplify the noise term to
	\[
	\frac{L(1+\sigma_m^2)\eta^2 + \eta}{2(1+\sigma_m^2)\sqrt\mu\,\sqrt{\eta/(1+\sigma_m^2)}}\leq \frac{\sigma_a^2\sqrt\eta}{\sqrt{\mu(1+\sigma_m^2)}}.\qedhere
	\]
\end{proof}

\section{A brief comparison of geometric conditions for optimization}\label{appendix convexity and pl}

\subsection{Definitions and elementary properties}
In this section, we compare some common geometric assumptions in optimization theory. Recall the following notions.

\begin{definition}
    Let $U\subseteq\R^d$ be an open set.
    We say that a $C^1$-function $f:U\to\R$
    \begin{enumerate}
        \item is {\em $\gamma$-quasar convex} if $\argmin f \neq \emptyset$ and if the inequality
        \[
        \langle \nabla f(x), x-x^*\rangle  \geq \gamma\big(f(x) - f(x^*)\big)
        \]
        holds for any $x\in U$ and any $x^*\in \argmin f$.

        \item star-convex if it is $1$-quasar convex.

        \item $(\gamma,\mu)$-strongly quasar convex if $\argmin f \neq \emptyset$ and
        \[
        \langle \nabla f(x), x-x^*\rangle  \geq \gamma\left(f(x) - f(x^*) + \frac\mu2\,\|x-x^*\|^2\right)        
        \]
        \item {\em is first order $\mu$-strongly convex} if 
        \[
        f(x) \geq f(z) + \langle \nabla f(z), x-z\rangle + \frac\mu2\|x-z\|^2 \qquad\forall\ x, z\in U.
        \]

        \item {\em satisfies the first order $\mu$-strong aiming condition} if for all $x$ we have
        \[
        f(\pi(x)) \geq f(x) + \langle \nabla f(x), \pi(x)-x\rangle + \frac\mu2\|x-\pi(x)\|^2 \qquad\forall\ x \in U
        \]
        where
        \[
        \pi(x) = \argmin\left\{\|x-z\|^2 : f(z) = \inf_{x'\in U}f(x')\right\}.
        \]
        In particular, we assume that the set of minimizers of $f$ is non-empty and that there exists a unique closest point $\pi(x)$ for all $x\in U$.

        \item {\em satisfies a PL condition with PL constant $\mu$} if 
        \[
        \|\nabla f(x)\|^2 \geq 2\mu \big(f(x) - \inf f\big).
        \]
    \end{enumerate}
\end{definition}

If $U$ is a convex set, the fourth condition is of course equivalent to regular $\mu$-strong convexity.

\begin{lemma}\label{lemma geometric implications}
    \begin{enumerate}
        \item If $f$ is first order $\mu$-strongly convex and has a minimizer in $U$, then $f$ satisfies the $\mu$-strong aiming condition.
        
        \item If $f$ satisfies the $\mu$-strong aiming condition, then $f$ satisfies the PL condition with the same constant $\mu$.

        \item If $f$ is $\mu$-strongly aiming on $\mathcal U_\alpha = \{x\in\R^d : f(x)<\alpha\}$, then the line segment connecting $x$ and $\pi(x)$ is contained in $\mathcal U_\alpha$ for all $x\in\mathcal U_\alpha$.

        \item If $f$ is $L$-Lipschitz continuous on $\mathcal U_\alpha$ and satisfies the PL-inequality with constant $\mu$ on $\mathcal U_\alpha$, then $\mu\leq L$.

        \item If $f$ is $(\gamma,\mu)$-strongly quasar convex, then $\argmin f$ consists of a single point.

        \item If $f$ is $\gamma$-quasar convex and $U$ is star-shaped with respect to the minimizer $x^*\in \argmin f$, then all sub-level sets of $f$ are star-shaped with respect $x^*$. In particular, the set of minimizers is convex.
    \end{enumerate}
\end{lemma}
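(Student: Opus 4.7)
For parts (1), (2), (5), the proofs are essentially one-line unfoldings of the definitions. For (1), apply the standard first-order strong convexity inequality with the second argument set to $\pi(x)$: this is precisely the closest-minimizer condition. For (2), rearrange the closest-minimizer inequality into $f(x) - \inf f \leq \langle \nabla f(x), x - \pi(x)\rangle - \frac{\mu}{2}\|x - \pi(x)\|^2$ and then apply Young's inequality $\langle a, b\rangle \leq \frac{1}{2\mu}\|a\|^2 + \frac{\mu}{2}\|b\|^2$ to $a = \nabla f(x)$, $b = x - \pi(x)$. For (5), evaluate $(\gamma, \mu)$-strong quasar convexity at $x = x_1^*$ with reference minimizer $x_2^*$; the inner product on the left vanishes since $\nabla f(x_1^*) = 0$, so $\frac{\gamma\mu}{2}\|x_1^* - x_2^*\|^2 \leq 0$ forces $x_1^* = x_2^*$.

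Part (3) is the main obstacle, because first-order strong convexity with respect to the closest minimizer is only assumed inside $\mathcal{U}_\alpha$, and that is exactly what we wish to establish on the segment. Parametrize it by $y(t) = \pi(x) + t(x - \pi(x))$ and set $h(t) = f(y(t))$, so $h(0) = \inf f$ and $h(1) = f(x) < \alpha$. A purely geometric computation gives, for every $z \in \mathcal{M}$ and $t \in [0, 1]$,
\[
\|y(t) - z\|^2 \geq \|y(t) - \pi(x)\|^2 + (1 - t)\|\pi(x) - z\|^2.
\]
This is obtained by expanding the left-hand side around $\pi(x)$ and substituting $\langle x - \pi(x), \pi(x) - z\rangle \geq -\tfrac{1}{2}\|\pi(x) - z\|^2$, which is just a rearrangement of the fact that $\pi(x)$ minimizes $\|x - \cdot\|$ over $\mathcal{M}$. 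In particular $\pi(x)$ is the strictly closest point in $\mathcal{M}$ to every $y(t)$, so whenever $y(t) \in \mathcal{U}_\alpha$ uniqueness of the projection forces $\pi(y(t)) = \pi(x)$, and first-order strong convexity with respect to the closest minimizer applied at $y(t)$ reads
\[
t\, h'(t) = \langle \nabla f(y(t)), y(t) - \pi(x)\rangle \geq h(t) - \inf f + \frac{\mu}{2}\|y(t) - \pi(x)\|^2 \geq 0.
\]
Hence $h'(t) \geq 0$ wherever $h(t) < \alpha$.

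To close (3), suppose for contradiction that $B := \{t \in [0, 1] : h(t) \geq \alpha\}$ is non-empty. Since $h(0), h(1) < \alpha$ and $B$ is closed, $T := \sup B \in (0, 1)$ satisfies $h(T) = \alpha$ by continuity and $h(t) < \alpha$ for every $t \in (T, 1]$. The monotonicity just established then applies on $(T, 1]$, so $h$ is non-decreasing on $[T, 1]$, forcing $h(1) \geq h(T) = \alpha$, which contradicts $h(1) < \alpha$. Hence $B = \emptyset$ and the segment is contained in $\mathcal{U}_\alpha$.

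For (4), interpreting ``$L$-Lipschitz continuous'' in the standard convention that $\nabla f$ is $L$-Lipschitz (the sense in which the paper uses $L$-smoothness elsewhere), pick any $z \in \mathcal{U}_\alpha$ with $f(z) > \inf f$: the descent lemma gives $f(z) - \inf f \leq \frac{L}{2}\|z - \pi(z)\|^2$, Lipschitzness of $\nabla f$ together with $\nabla f(\pi(z)) = 0$ gives $\|\nabla f(z)\|^2 \leq L^2 \|z - \pi(z)\|^2 \leq 2L(f(z) - \inf f)$, and PL gives $\|\nabla f(z)\|^2 \geq 2\mu(f(z) - \inf f)$; dividing by $2(f(z) - \inf f) > 0$ yields $\mu \leq L$. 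For (6), fix $y$ with $f(y) \leq c$ and a minimizer $x^*$, set $g(t) = f(x^* + t(y - x^*))$, and apply $\gamma$-quasar convexity at the point $x = x^* + t(y - x^*)$ (for which $x - x^* = t(y - x^*)$) to obtain $t g'(t) = \langle \nabla f(x), x - x^*\rangle \geq \gamma(g(t) - \inf f) \geq 0$. Hence $g$ is non-decreasing on $[0, 1]$, so $g(t) \leq g(1) = f(y) \leq c$ for every $t$, and $\{f \leq c\}$ is star-shaped about $x^*$. Applying this with $c = \inf f$ makes $\argmin f$ star-shaped about each of its own points, hence convex.
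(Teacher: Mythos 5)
Your proofs of parts (1), (2), (3), (5), and (6) are correct. Parts (2), (3), and (5) in fact take cleaner routes than the paper: (2) replaces the paper's explicit minimization over a scalar $\xi$ with Young's inequality (the two are equivalent, but yours is tidier); (3) proves $\pi(y(t))=\pi(x)$ via a quantitative inequality
\(\|y(t)-z\|^2 \geq \|y(t)-\pi(x)\|^2 + (1-t)\|\pi(x)-z\|^2\)
that repairs the slightly garbled triangle-inequality chain in the paper, and closes with a $\sup B$ argument instead of the paper's informal reasoning about connected components; (5) is obtained in one line by plugging $x=x_1^*$ into the strong quasar inequality rather than the paper's monotonicity-along-segments argument. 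Part (1) is essentially the paper's observation, though you leave implicit that a first-order strongly convex function has a unique minimizer (so that $\pi(x)$ is well-defined). Part (6) mirrors the paper.

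Part (4) contains a genuine error. You write ``the descent lemma gives $f(z)-\inf f \leq \tfrac L2\|z-\pi(z)\|^2$'' and then chain $\|\nabla f(z)\|^2 \leq L^2\|z-\pi(z)\|^2 \leq 2L(f(z)-\inf f)$. But the second inequality in that chain is $\|z-\pi(z)\|^2 \leq \tfrac 2L(f(z)-\inf f)$, which is the \emph{reverse} of what the descent lemma at $\pi(z)$ gives you; that lemma says $\|z-\pi(z)\|^2 \geq \tfrac 2L(f(z)-\inf f)$. So the desired bound $\|\nabla f(z)\|^2 \leq 2L(f(z)-\inf f)$ does not follow from Lipschitzness of $\nabla f$ combined with the descent lemma at $\pi(z)$: bounding $\|\nabla f(z)\|$ by the distance to $\pi(z)$ goes the wrong way when you then try to re-express the distance in terms of $f(z)-\inf f$. (You would need a \emph{quadratic growth} bound $\|z-\pi(z)\|^2 \leq \tfrac 2L(f(z)-\inf f)$, which does not follow from $L$-smoothness; only the weaker QG constant $\mu$ follows from the PL condition.) The standard fix, and what the paper does, is to apply the descent lemma along a single gradient step rather than along the segment to $\pi(z)$: since $z - \tfrac1L\nabla f(z) \in \mathcal U_\alpha$ (by a short continuity/bootstrap argument), one has
\(\inf f \leq f\bigl(z - \tfrac 1L\nabla f(z)\bigr) \leq f(z) - \tfrac{1}{2L}\|\nabla f(z)\|^2,\)
so $\|\nabla f(z)\|^2 \leq 2L(f(z)-\inf f)$; combining with PL and dividing by $f(z)-\inf f>0$ gives $\mu \leq L$. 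Note also that claim (4) only assumes PL and $L$-smoothness, not the closest-minimizer convexity, so a proof should not rely on part (3) to place the segment $[z,\pi(z)]$ in $\mathcal U_\alpha$ -- another reason the one-step gradient argument is the appropriate one here.
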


If $U=\R^d$, then any strongly convex function $f:U\to\R$ has a minimizer in $U$. On general open sets, this is not guaranteed. 

\begin{proof}
{\bf First claim.}
    If $f$ is first order $\mu$-strongly convex and has a minimizer in $U$, then the minimizer $x^*$ is unique since for $x\neq x^*$ we have
    \[
    f(x) \geq f(x^*) + \langle \nabla f(x^*), x-x^*\rangle + \frac\mu2 \,\|x-x^*\|^2 = f(x^*) + \frac\mu2 \,\|x-x^*\|^2 > f(x^*).
    \]
    In particular, for every $x$ there exists a unique closest minimizer $\pi(x) = x^*$. The strong aiming condition therefore requires the first order convexity condition only for pairs of points $x, x^*$ rather than all points $x, z$. 

{\bf Second claim.} This result follows by the same proof that implies the PL condition for strongly convex functions: If $f$ satisfies the $\mu$-strong aiming condition, then
\[
\langle \nabla f(x),\,x-\pi(x)\rangle \geq f(x) - f(\pi(x)) + \frac\mu2\,\|x-\pi(x)\|^2
\]
and thus
\begin{align*}
\|\nabla f(x)\| &\geq \left\langle \nabla f(x),\,\frac{x-\pi(x)}{\|x-\pi(x)\|}\right\rangle\\
    &\geq \frac{f(x) - f(\pi(x))}{\|x-\pi(x)\|} + \frac\mu2\,\|x-\pi(x)\|\\
    &\geq \min_{\xi>0} \left(\frac{f(x) - f(\pi(x))}\xi + \frac\mu2\,\xi\right).
\end{align*}
Setting the derivative with respect to $\xi$ to zero, we find that the minimum is achieved when
\[
\frac{f(x) - f(\pi(x))}{\xi^2} = \frac\mu2, \quad\text{so }\xi = \sqrt{\frac{2\big(f(x) - \inf f\big)}\mu},
\]
so
\[
\|\nabla f(x)\| \geq \sqrt{\frac{\mu\big(f(x) - \inf f\big)}{2}} + \frac\mu2 \sqrt{\frac2\mu\,\big(f(x) - \inf f\big)} = \sqrt{2\mu\,\big(f(x)-\inf f\big)}.
\]
The PL condition follows by squaring both sides.

{\bf Third claim.} Let $x_t = (1-t)\pi(x) + tx$ for $0\leq  t \leq 1$.  First we observe that $\pi(x_t) = \pi(x)$ for every $t\in[0,1]$. Indeed, if there is another minimizer $z$ of $f$ such that $\|x_t- z\|\leq \|x_t,\pi(x)\|$ then 
\[
\|x-\pi(x)\| = \|x-x_t\|+\|x_t-\pi(x)\| \leq  \|x-x_t\| + \|x_t-z\| \leq \|x-z\|
\]
since $x,x_t,$ and $\pi(x)$ all lie on a straight line. If there exists a unique closest point $\pi(x)$ in $\mathcal M$, then we find that $z= \pi(x)$.

We note that $x_t-\pi(x) = t(x-\pi(x))$ and thus
\[
\frac{d}{dt} f(x_t) = \langle \nabla f(x_t), x-\pi(x)\rangle = \frac1t \langle \nabla f(x_t), \:x_t- \pi(x_t)\rangle \geq 0.
\]
In particular, $t\mapsto f(x_t)$ is an increasing function on the set $I_x:= \{t>0 : x_t\in \mathcal U_\alpha\}$. If $I_x$ has multiple connected components in $(0,1)$, this is only possible if $f=\alpha$ on the boundaries. If $f=\alpha$ on the lower boundary of a connected component, then $f\geq \alpha$ inside the entire interval as $f$ increases, contradicting the definition of $\mathcal U_\alpha$.

{\bf Fourth claim.}
Since $f$ is continuous, $\mathcal U_\alpha$ is open. Therefore, $x_t:= x_t - t\nabla f(x)\in \mathcal U_\alpha$ if $t$ is small. 
If $f$ is $L$-Lipschitz continuous in $\mathcal U_\alpha$, then
\[
f(x_t) \leq f(x) + \langle f(x), \:x_t-x\rangle + \frac{L}2 \|x_t-x\|^2
\]
for all $t$ such that $x_s\in \mathcal U_\alpha$ for $s\in(0,t)$ and $t\leq 1/L$ by Lemma \ref{lemma gd}. Since the function $t\mapsto f(x) - \frac t2\|\nabla f(x)\|^2$ is decreasing in $t$, we see that $x_t\in \mathcal U_\alpha$ for $t\in [0, 1/L]$. In particular, we note that
\begin{align*}
0 &\leq f(x- \nabla f/L) - \inf f \leq f(x) - \frac1{2L}\,\|\nabla f(x)\|^2 - \inf f \leq f(x) - \frac{2\mu}{2L}\big(f(x)-\inf f\big) - \inf f \\
&\leq (1-\mu/L) \big(f(x) - \inf f\big),
\end{align*}
implying the result.

{\bf Fifth claim.} Assume that $x^*, x'\in \argmin f$. Define $x_t = t x^* + (1-t)x'$. Then
\[
\frac{d}{dt}f(x_t) = \langle \nabla f(x_t), x^* - x'\rangle = \left\langle \nabla f(x_t), \frac{x_t - x'}t\right\rangle\geq \frac{\gamma}t\left(f(x_t) - \min f + \frac\mu2 \|x_t-x'\|^2\right)>0
\]
unless $\|x_t - x'\|^2 = 0$, i.e.\ unless $x^*=x'$. Thus $f(x_t)$ is strictly monotone increasing on $\{t\in[0,1] : x_t \in U\}$. Since $x^* \in U$, this means that $f$ must be strictly increasing on the final segment $(1-\xi, 1]$ where it reaches the global minimizer $x^*$ at $t=1$, leading to a contradiction.

{\bf Sixth claim.} This follows by essentially the same argument as the fifth claim: If $x$ is any point, then $tx + (1-t)x^*\in U$ since $U$ is star-shaped about $x^*$ and
\[
\frac{d}{dt} f(x_t) = \langle \nabla f(x_t), x-x^*\rangle =  \left\langle \nabla f(x_t), \frac{x_t-x^*}t\right\rangle \geq \frac\gamma t\left(f(x_t) - f(x^*)\right)\geq 0.
\]
In particular, $f$ is increasing along any rays starting at $x^*$, so if $f(x) < \alpha$, then $f(x_t)<\alpha$ for any $t\in[0,1]$.

The set of minimizers is star-shaped about every minimizer, hence convex.
\end{proof}

\subsection{A one-dimensional example}\label{appendix 1d example}
In the following simple one-dimensional example, we illustrate the hierarchy of geometric conditions between convexity and the PL condition.

\begin{example}\label{example 1d returns}
Let $R>0$ and $\eps\in(0,1)$. Consider the even function given for $x>0$ by
\begin{align*}
    f(x) &= \frac{1+\eps \sin (2R\log x)}2\,x^2\\
    f'(x) &= \big(1+\eps \sin(2R\log x) + R\eps\,\cos(2R\log x)\big) x\\
    f''(x) &= 1+\eps \sin(2R\log x) + R\eps\,\cos(2R\log x) + 2R\eps\,\cos(2R\log x) - 2R^2\eps\,\sin(2R\log x)\\
    &= 1 + \eps(1-2R^2)\,\sin(2R\log x) + 3R\eps\,\cos(2R\log x).
\end{align*}
We first note that evidently $\frac{1-\eps}2\,x^2 \leq f(x) \leq \frac{1+\eps}2 \,x^2$ for all $x$. In particular, $x^*=0$ is the unique global minimizer of $f$.

{\bf $L$-smoothness.} The function $g(\xi) = A\sin (\xi)+ B\cos(\xi)$ attains its maximum when $A\cos \xi - B\sin\xi=0$ for $A, B\in\R$, i.e. $\sin\xi = \frac{A}{\sqrt{A^2+B^2}}$ and $\cos \xi = \frac{B}{\sqrt{A^2+B^2}}$. In particular $\max_\xi g(\xi) = \sqrt{A^2+B^2}$, so
\[
|f''(x)| \leq 1 + \eps\sqrt{\big(1-2R^2\big)^2 + (3R)^2} = 1+ \eps\sqrt{1+5R^2 + 4R^4}.
\]
The second derivative is discontinuous at $x^*=0$, but bounded on $\R\setminus\{0\}$ by $L=1 + \eps\sqrt{1+5R^2 + 4R^4}$, i.e.\ $f$ is $L$-smooth.

{\bf Convexity.} By the same consideration, we see that $f$ is convex if and only if $f''\geq 0$, i.e.\ if and only if $\eps\sqrt{1+5R^2 + 4R^4}\leq 1$. If the inequality is strict, $f$ is strongly convex with constant $\mu = 1-\eps\sqrt{1+5R^2 + 4R^4}$.

{\bf PL inequality.} 
By the same argument as above, we see that
\[
|f'(x)|^2 \geq \big(1+ \eps\sin + R\eps\,\cos\big)^2 x^2 \geq \left(1 - \eps \sqrt{1+R^2}\right)^2x^2
\]
if $\eps \sqrt{1+R^2}<1$, where all trigonometric functions are evaluated at $\xi = 2R\log x$. 

In particular, if $\eps\sqrt{1+R^2}<1$, then
\[
|f'(x)|^2 \geq  \left(1 - \eps \sqrt{1+R^2}\right)^2x^2 \geq 2\,\frac{ \left(1 - \eps \sqrt{1+R^2}\right)^2}{1+\eps}\,\frac{1+\eps}2\,x^2 \geq 2\frac{ \left(1 - \eps \sqrt{1+R^2}\right)^2}{1+\eps}\,f(x),
\]
i.e.\ $f$ satisfies the PL condition with constant
\[
\mu = \frac{(1-\eps\sqrt{1+R^2})^2}{1+\eps}.
\]

{\bf Infinite number of local minimizers cluster at the orgin.} If $\eps\sqrt{1+R^2}>1$ on the other hand, then by the same argument $f'$ changes sign an infinite number of times in any neighborhood of the origin since $\lim_{x\to 0^+}\log x = -\infty$.

{\bf $\mu$-strong aiming condition.} 
Note that
\begin{align*}
    x\cdot f'(x) - f(x) &= \left(\frac12 + \frac\eps 2\sin(2R\log x) + R\eps\,\cos(\log x)\right)x^2\\
        &\geq \left(\frac12 - \sqrt{(\eps/2)^2 + (R\eps)^2}\right)x^2\\
        &= \frac12\left(1 - \eps\sqrt{1+4R^2}\right)x^2.
\end{align*}
In particular, $f$ is $\mu$-strongly aiming (with respect to the unique global minimizer) with
\[
\mu =1 - \eps\sqrt{1+4R^2}
\]
if $\eps\sqrt{1+4R^2}<1$ and it fails to be $\mu$-strongly aiming for any $\mu>0$ otherwise.

{\bf Quasar-convexity.} Since the minimizer is unique, $(\gamma,\mu)$-strong convexity is a strictly more general notion than strong aiming condition as we have an additional parameter $\gamma$ to relax the requirements. Essentially the same calculation reads 
\begin{align*}
x\cdot f'(x) - \gamma \,f(x) 
    &= \left(1- \frac{\gamma}2 + \eps\left((1-\gamma/2)\,\sin(\dots) + R\,\cos(\dots)\right)\right)x^2\\
    &\geq \left(1 - \frac\gamma2  - \eps\, \sqrt{(1-\gamma/2)^2 + R^2}\right)x^2.
\end{align*}
In particular, $f$ is $\gamma$-quasar convex if
\begin{align*}
1-\frac\gamma2 - \eps \sqrt{(1-\gamma/2)^2 + R^2} \geq 0 \qquad &\LRa \quad 1 - \eps\sqrt{1 + \left(\frac{R}{1-\gamma/2}\right)^2} \geq 0\\
    &\LRa\quad 1 + \left(\frac{R}{1-\gamma/2}\right)^2 \leq \frac1{\eps^2}.
\end{align*}
Such a $\gamma$ can be found if and only if $R^2 < \frac{1-\eps^2}{\eps^2}$. Choosing $\gamma$ slightly smaller, it is then also always possible to make $f$ $(\gamma',\mu')$-strongly convex for some $\gamma'\in(0,\gamma)$ and $\mu'>0$.

{\bf Relationship between conditions.} We quickly summarize the various parameter ranges for which the function $f$ satisfies good geometric conditions.

\vspace{1ex}

\begin{center}
    \renewcommand{\arraystretch}{1.6}
    \begin{tabular}{r|ccc}
        &PL condition& $\mu$-strongly aiming &$\mu$-strongly convex \hspace{2mm}\\\hline 
        Must be $<1$ & $\eps \sqrt{1+R^2}$ &  $\eps \sqrt{1+4R^2}$ & $\eps\sqrt{1+5R^2 + 4R^4}$\\ 
        Constant & $\frac{(1-\eps \sqrt{1+R^2})^2}{1+\eps}$ & $1- \eps\sqrt{1+4R^2}$ & $1-\eps\sqrt{1+5R^2 + 4R^4}$
    \end{tabular}
\end{center}

\vspace{1ex}

Evidently, classical strong convexity implies strong aiming condition with respect to the global minimizer which in turn implies the PL condition. The parameter ranges and constants are generally vastly different. All estimates, except for the PL constant, are sharp. In particular, the one-dimensional examples demonstrate that strong aiming condition is strictly weaker than convexity along the line segment connecting $x$ to $\pi(x)$.

\end{example}

\begin{figure}
    \centering
    \includegraphics[width=.33\textwidth]{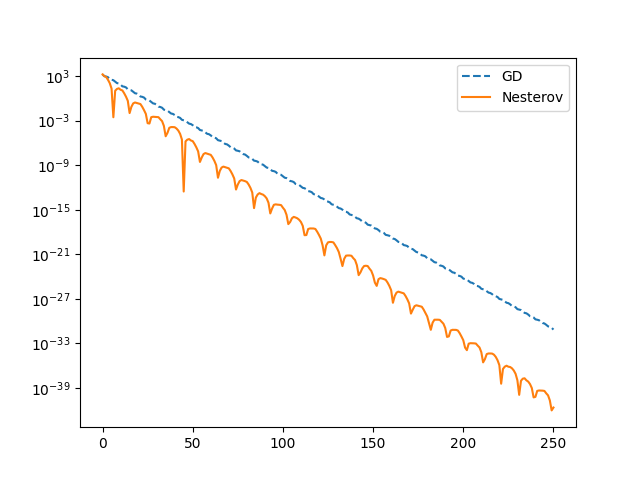}\hfill
    \includegraphics[width=.33\textwidth]{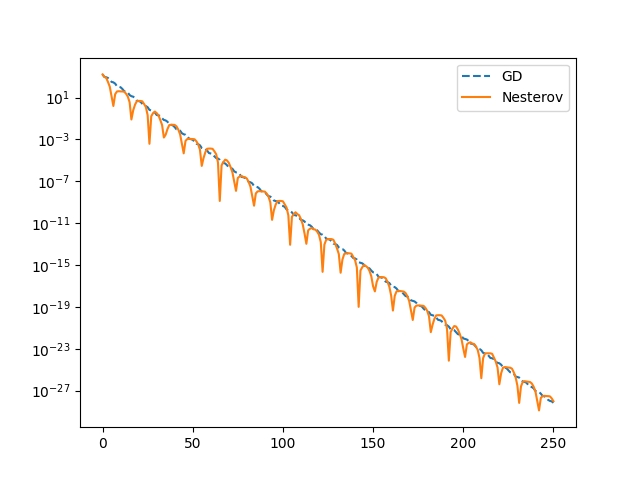}\hfill
    \includegraphics[width=.33\textwidth]{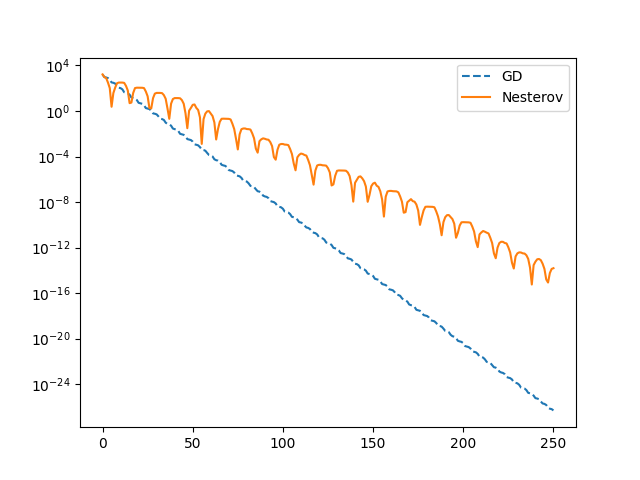}\hfill
    \caption{\label{figure 1d example 3}
    We compare the trajectories of gradient descent and Nesterov's algorithm for the objective function $f$ in Example \ref{example 1d returns} with $R=6$ and $\eps = 0.075$ (left), $\eps = 0.08$ (middle) and $\eps = 0.085$ (right). Evidently, if $\eps\sqrt{1+4R^2}$ is very close to the threshold value 1, gradient descent outperforms Nesterov's algorithm with the theoretically guaranteed parameters.
    }
\end{figure}

In the case of quadratic objective functions or functions modelled on them (such as the distance function from a manifold), the PL constant is essentially the same as the parameter of strong convexity. For the function $f$ in Example \ref{example 1d returns} on the other hand, the PL constant is noticeably larger than the parameter of strong convexity with respect to the unique minimizer. In Figure \ref{figure 1d example 3}, we observe that with the parameters $\eta=1/L$ and $\rho =(1-\sqrt{\mu\eta})/(1+\sqrt{\mu\eta})$, gradient descent may at times converge faster, at least with the parameter choice that is derived over a large function class rather than for an individual objective function.

\subsection{Deep learning}\label{appendix deep learning}

While some notions of a `good' geometry are weaker than others (for instance, strong convexity implies the PL condition but not vice versa), they all share an important common feature: {\em If $x$ is a critical point of $f$, then it is a global minimizer.} Namely, the PL inequality implies that $\|\nabla f(x)\|^2 >0$ unless $f(x) = \inf f$ and $\gamma$-quasar convexity implies that $\rangle\nabla f(x), x-x^*\rangle >0$ unless $f(x) = f(x^*) = \inf f$, showing that $\nabla f(x) \neq 0$.

In deep learning applications, critical points are guaranteed to occur under very general circumstances: If 
\[
f(\beta, a, W, b; x) = \beta + \sum_{i=1}^na_i\sigma(w_i^Tx+b)
\]
is a neural network with a single hidden layer and a $C^1$-activation function satisfying $\sigma(0) = 0$ (e.g. $\tanh$), then the loss function
\[
L(\beta, a, W, b) = \frac1n\sum_{j=1}^n \big|f(\beta, a, W, b; x_j) - y_j\big|^2,
\]
satisfies 
\[
\nabla L(\beta, a, W, b) = 0
\]
for $a = b = 0\in \R^n$, $W = 0 \in \R^{n\times d}$ and $\beta = \frac1n \sum_{j=1}^n y_j$. The same is true if a row $w_i$ of $W$ is merely orthogonal to all data points $x_j$ but does not vanish.

For deeper networks, the set of critical points becomes larger: As long as the parameters of two layers are all zero, the remaining layers can be chosen arbitrarily. If more than two layers are all zero, then the also the second parameter derivative vanishes. In particular, the critical point for which all parameters are zero cannot be a strict saddle point.

While there are guarantees that individual algorithms escape certain types of critical points almost surely (e.g. strict saddles, \citep{lee2019first, o2019behavior}), they may take very long to do so \citep{du2017gradient}. The analysis of accelerated rates becomes asymptotic at best. We claim that our notion of strong aiming condition suffers from the same `optimism' globally, but locally captures two important features of deep learning landscapes close to the set of global minimizers which are not captured by concepts which require a geometric condition with respect to {\em all} minimizers: A manifold along which we can move tangentially, and convexity in directions which are perpendicular to the manifold.

\end{document}